\documentclass[reqno]{amsart}
\usepackage{xcolor}
\usepackage{amsthm,amsmath,amssymb,latexsym,soul,cite,mathrsfs}
\usepackage{enumitem}
\usepackage{color,enumitem}
\usepackage[colorlinks=true,urlcolor=blue,
citecolor=red,linkcolor=blue,linktocpage,pdfpagelabels,
bookmarksnumbered,bookmarksopen]{hyperref}
\usepackage[english]{babel}

\usepackage{graphicx}

\usepackage[left=2.7cm,right=2.7cm,top=2.9cm,bottom=2.9cm]{geometry}

\usepackage[hyperpageref]{backref}

\numberwithin{equation}{section}

\newtheorem{theorem}{Theorem}[section]
\theoremstyle{plain}
\newtheorem{theoremletter}{Theorem}

\newtheorem{lemma}[theorem]{Lemma}
\newtheorem{lemmaletter}[theoremletter]{Lemma}
\newtheorem{corollary}[theorem]{Corollary}
\newtheorem{proposition}[theorem]{Proposition}

\newtheorem{remark}[theorem]{Remark}

\newcommand{\dx}{\,\mathrm{d}x}
\newcommand{\dy}{\,\mathrm{d}y}
\newcommand{\dz}{\,\mathrm{d}z}

\newcommand{\dtau}{\,\mathrm{d}\tau}

\newcommand{\dxdy}{\,\mathrm{d}x\mathrm{d}y}
\newcommand{\drho}{\,\mathrm{d}\varrho}

\DeclareMathOperator{\supp}{supp}

\newcommand{\loca}{\operatorname{loc}}

\usepackage[norefs,nocites]{refcheck}

\title[]{Supercritical Schr\"odinger equations involving integro-differential operators and vanishing potentials}

\author[R.C.~Duarte]{Ronaldo C. Duarte}
\address{Department of Mathematics,
	Federal University of Rio Grande do Norte
	59078-970, Natal-RN, Brazil}
\email{ronaldo.cesar.duarte@ufrn.br}

\author[D.~Ferraz]{Diego Ferraz}
\address{Department of Mathematics,
	Federal University of Rio Grande do Norte
	59078-970, Natal-RN, Brazil}
\email{diego.ferraz.br@gmail.com}

\keywords{Integro-differential operators, Vanishing potentials, Supercritical growth}

\subjclass[2020]{Primary: 35J60, 35A15; Secondary: 47G20, 35R11.}
\date{\today}
\begin{document}
	
		\maketitle

	\begin{abstract}
		This paper is devoted to the study of the existence of positive and bounded solutions for a Schr\"odinger type equation defined on the entire Euclidean space, involving a general integro-differential operator. We consider the case where the potential is nonnegative and vanishes at infinity with a nonlinearity exhibiting critical or supercritical growth in the Sobolev sense. To overcome the lack of compactness and the difficulties imposed by the general structure of the nonlinearity, we employ variational methods combined with a penalization technique. Unlike the classical fractional Laplacian framework, where specific regularity results, decay estimates, and the $s$-harmonic extension are available, our approach relies on a weak Maximum Principle combined with the construction of a supersolution based on the truncated fundamental solution of the fractional Laplacian to control the asymptotic behavior of the solutions.  We prove that, for sufficiently small perturbation parameters and under suitable decay conditions on the potential, the equation admits a nontrivial solution.
	\end{abstract}

	\section{Introduction}
	
	In this paper, we study the existence of solutions to the Schr\"{o}dinger equation
	\begin{equation}\label{P}\tag{$P$}
		 \mathcal{L}_{K_s} u + V(x)u = f(u) + \lambda|u|^{q-2}u \quad \text{in } \mathbb{R}^N,
	\end{equation}
	where $N > 2s$ and $0<s<1,$ involving the integro-differential operator $\mathcal{L}_{K}u$ given by
	\begin{equation}\label{equ1}
		 \mathcal{L}_{K_s}u(x)= \lim _{\varepsilon \rightarrow 0^+} \int _{\mathbb{R}^N \setminus B_{\varepsilon} (x)} (u(x) - u(y))K (x-y) \dy.
	\end{equation}
	Here, $V$ is a nonnegative potential vanishing at infinity, $\lambda > 0$, and $f$ is a continuous function exhibiting subcritical growth at infinity and critical growth at the origin in the fractional Sobolev sense. The critical or supercritical case $q \geq 2^{\ast}_s = 2N/(N-2s)$ is considered. The precise hypotheses are stated below.
	
	As highlighted in the recent monograph \cite{zbMATH07813655}, a primary motivation for investigating operators of the form \eqref{equ1} lies in their intrinsic connection to the theory of stochastic processes. Specifically, the operator $\mathcal{L}_{K_s}$ arises as the infinitesimal generator of a symmetric L\'{e}vy process. While the classical Laplacian is associated with Brownian motion (describing continuous paths), nonlocal operators correspond to processes allowing for jump discontinuities (L\'{e}vy flights). In this probabilistic framework, the kernel $K_s(x-y)$ represents the density of the jump measure, determining the likelihood of a particle jumping from $y$ to $x$. Beyond this probabilistic foundation, nonlocal operators arise naturally in a wide range of mathematical models and real-world applications. Notable examples include phase transitions (\cite{phase1,phase2,phase3}), obstacle problems (\cite{obstaculo1,obstaculo2}), and optimization strategies (\cite{otimizacao1}). For a comprehensive overview of these applications and further references, we refer the reader to \cite{outros1, outros2, outros3, outros4, dpv} and the references therein.

In the context of the classical Laplacian, C. O. Alves and M. A. S. Souto \cite{alv} investigated the Schr\"{o}dinger equation $-\Delta u + V(x) u = f(u)$ in $\mathbb{R}^N$, considering a nonnegative potential vanishing at infinity and a nonlinearity subject to growth conditions closely related to ours. To overcome the lack of compactness, they introduced a framework for potentials $V$ satisfying a specific decay behavior away from the origin. Precisely, they assumed that there exist constants $\Lambda > 0$ and $R > 1$ such that
\begin{equation}\label{alvv}
	\frac{1}{R^4}\inf _{|x| \geq R} |x|^4 V(x) \geq \Lambda.
\end{equation}
Since the publication of \cite{alv}, the study of elliptic equations with potentials satisfying conditions like \eqref{alvv} has attracted significant attention. For related results, we refer the reader to \cite{alv3,Ambro1,Bon1,uber1} and the references therein.
	
	On the other hand, a primary motivation for the study of the integro-differential operators defined in \eqref{equ1} arises from the particular case where the kernel is given by $K_s(z) = C_{N,s} |z|^{-(N+2s)},$ and $C_{N,s}$ is a suitable normalization constant. In this setting, $\mathcal{L}_{K_s}$ corresponds to the well-known fractional Laplacian $(-\Delta )^s$ (see \cite{dpv}), and Eq. \eqref{P} reduces to
	\begin{equation}\label{fp}
		(-\Delta )^s u + V(x)u = f(u) + \lambda|u|^{q-2}u, \quad \text{in } \mathbb{R}^N.
	\end{equation}
	The literature concerning elliptic problems involving the fractional Laplacian and variational methods is vast. We highlight, for instance, the works \cite{dpv,ambrosio2016,ambrosio1,zbMATH07061025,bisci1,caf,uber1,fel}. Specifically in \cite{zbMATH07061025}, Q. Li, K. Teng, X. Wu and W. Wang investigated Eq. \eqref{fp} assuming that $V$ is coercive and bounded away from zero (i.e., $\lim _{|x| \rightarrow \infty } V(x) = +\infty$ and $\inf_{\mathbb{R}^N} V > 0$), and that $f$ satisfies the following growth condition:
	\begin{enumerate}[label=($f_I$),ref=$(f_I)$]
		\item There exists $p \in (2, 2^\ast _s)$ such that $|f(t)| \leq C(1 + |t|^{p-1})$ and $\lim _{t \rightarrow 0} f(t)t^{-1} = 0.$
	\end{enumerate}
	This hypothesis, combined with other structural conditions in \cite{zbMATH07061025}, implies that $f$ behaves like a power function of the form $t \mapsto |t|^{p-2}t$.
		
	Our study is strongly motivated by the fractional Laplacian framework, particularly the work of J. A. Cardoso, D. S. dos Prazeres and U. B. Severo \cite{uber1}. Their paper presents a refined analysis that synthesizes the methods of \cite{alv} and \cite{zbMATH07061025} to address vanishing potentials satisfying \eqref{alvv}. Our main objective is to extend and generalize these results. Roughly speaking, the strategy employed in \cite{uber1} relies on an auxiliary problem defined by a suitable truncation of the nonlinearity $f(u) + \lambda |u|^{q-2}u$. A crucial step in their argument is establishing that the solution to the auxiliary problem decays to zero at infinity. In \cite{uber1}, this is achieved by proving a suitable uniform bound (via the Moser iteration technique) for the solutions of the auxiliary problem and  exploiting the specific regularity theory available for the fractional Laplacian. Once the decay is established, they use a Maximum Principle for the fractional Laplacian to show that the auxiliary solution solves the original equation. 
	
	However, extending this approach to the general integro-differential operators defined in \eqref{equ1} presents significant challenges. We emphasize that the powerful tools associated with the fractional Laplacian, such as the $s$-harmonic extension developed in \cite{caf}, are not available in this general context. Consequently, the regularity results used in \cite{uber1} cannot be directly applied, requiring new estimates to control the asymptotic behavior of the solution. To overcome these obstacles, we adopt an alternative strategy grounded in a recently developed tool: a weak Maximum Principle in $\mathbb{R}^N$ for operators of the form $u \mapsto \mathcal{L}_{K_s}u + a(x)u$ (with $a \in L^1_{\loca}(\mathbb{R}^N)$ and $a(x) \geq 0$), as established in \cite{duarte2}. This is combined with a refined analysis of the operator $\mathcal{L}_{K_s}$ acting on the truncated fundamental solution of the fractional Laplacian $\Gamma_\ast(x) = \min \{ R^{-(N-2s)} , |x|^{-(N-2s)} \}$. Specifically, we demonstrate that the results of \cite{uber1} can be extended to this setting provided $K_s$ is comparable to the standard fractional Laplacian kernel $|x|^{-(N+2s)}$ and that $\Gamma_\ast$ serves as a weak supersolution for $\mathcal{L}_{K_s}$ away from the origin (see hypotheses \ref{K_quatro} and \ref{K_cinco}). By combining these results, we successfully obtain the necessary estimates to generalize the main theorem of \cite{uber1} to our broader setting. In particular, the proof we provide for the uniform bound of the auxiliary solutions differs from \cite{uber1}, as the generality of our operator necessitates a distinct approach; in fact, the structural limitations of our general framework prevent the direct application of most arguments from \cite{uber1}, even the purely variational ones. Since we were able to overcome the obstacles imposed by the lack of regularity by relying strictly on variational arguments and weak comparison principles, our methodology can be seen as the natural extension of the well-established framework introduced by C. O. Alves and M. A. S. Souto \cite{alv} to the general non-local setting.
	
Before stating our main result, we point out that related problems involving integro-differential operators have been studied in \cite{gu1, iane1, iane2} under distinct structural conditions. For a comprehensive survey of this subject, we refer to \cite{bisci1,ser,zbMATH07813655}.
	
	\subsection{Hypotheses and main result} We assume that $f\in C(\mathbb R,\mathbb R)$ is a nonzero function satisfying $f(t)\geq0$ for all $t>0,$ and $f(t)=0$ for all $t\leq 0. $ Furthermore, we impose the following hypotheses:
	\begin{enumerate}[label=($f_1$),ref=$(f_1)$]
		\item \label{f_um}$\displaystyle \limsup_{t \rightarrow {0^{+} } }\frac{f(t)}{t^{ 2^{\ast}_s -1}}< + \infty$ and $\displaystyle \limsup_{t \rightarrow \infty}\frac{f(t)}{t^{p-1}}<+\infty,$ for some $p \in (2,2^{\ast}_s);$
	\end{enumerate}
	\begin{enumerate}[label=($f_2$),ref=$(f_2)$]
		\item\label{f_dois}There is $\theta \in (2,p)$ such that $\theta F(t)\leq tf(t),$ for all $t\in\mathbb{R},$ where $F(t) = \int _0 ^t f(\tau )\dtau;$
	\end{enumerate}
	Regarding the potential $V$, we assume the following conditions:
	\begin{enumerate}[label=($V_1$),ref=$(V_1)$]
		\item\label{V_um}$V \in L^{\infty}_{\loca}(\mathbb{R}^N)$ and $V(x)\geq 0$, for almost every (a.e.) $x \in \mathbb{R}^{N};$
	\end{enumerate}
	\begin{enumerate}[label=($V_2$),ref=$(V_2)$]
		\item\label{V_dois}There are $R>1$ and $\Lambda>0$ such that
		\begin{equation*}
			\frac{1}{R^{4s } }\inf_{|x|\geq R}V(x)|x|^{4s}\geq \Lambda;
		\end{equation*}
	\end{enumerate}
	The kernel $K_s$ is a positive measurable function on $\mathbb{R}^N$ satisfying:
		\begin{enumerate}[label=($K_1$),ref=$(K_1)$]
			\item\label{K_um} $K_s(x)=K_s(-x),$ for all $x \in \mathbb{R}^{N}$;
		\end{enumerate}
		\begin{enumerate}[label=($K_2$),ref=$(K_2)$]
			\item\label{K_dois}There is $\mathcal{C}_1 >0$ such that $K_s(x) \geq \mathcal{C}_1 |x|^{-(N+2s)}$ a.e. in $\mathbb{R}^{N}$;
		\end{enumerate}
		\begin{enumerate}[label=($K_3$),ref=$(K_3)$]
			\item\label{K_tres}$\gamma K_s \in L^{1}(\mathbb{R}^{N})$, where $\gamma(x)=\min\left\{|x|^{2},1\right\}$;
		\end{enumerate}
		Hypotheses \ref{K_um}--\ref{K_tres} provide the standard structural framework to study semilinear elliptic equations like \eqref{P} via variational methods. In addition, we assume:
		\begin{enumerate}[label=($K_4$),ref=$(K_4)$]
			\item\label{K_quatro} There is $\mathcal{C}_2 >0$ such that $ K_s(x) \leq  \mathcal{C}_2|x|^{-(N+2s)}, $ a.e. in $\mathbb{R}^{N};$
		\end{enumerate}
		\begin{enumerate}[label=($K_5$),ref=$(K_5)$]
			\item\label{K_cinco} The operator $\mathcal{L}_{K_s}$ admits the function $\Gamma_\ast(x) = \min\{ |x|^{-(N-2s)}, R^{-(N-2s)} \}$ as a supersolution in $\mathbb{R}^N \setminus \overline{B}_R$ in the weak sense.
		\end{enumerate}
In contrast, \ref{K_quatro} and \ref{K_cinco} are employed exclusively in Section \ref{s_proof} to establish our main result, serving specifically to control the asymptotic behavior of the auxiliary solution for a suitable choice of parameters.
Under the above hypotheses, our main result is stated as follows:
\begin{theorem}\label{main}
There are $\lambda_0>0$ and $\Lambda^{\ast}>0$ such that Eq. \eqref{P} has a positive bounded solution in $\mathbb{R}^N$, whenever $0<\lambda <\lambda_0$ and $\Lambda>\Lambda^{\ast}.$
\end{theorem}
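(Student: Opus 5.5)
We follow the Alves--Souto penalization scheme as adapted in \cite{uber1}, replacing the regularity-based decay argument by a comparison argument. First we truncate the nonlinearity. Fix $k>\theta/(\theta-2)$ and set $a>0$ to be chosen. Let $\tilde f(t)=f(t)+\lambda (t^+)^{q-1}$ for $t\le a$ and $\tilde f(t)=\min\{f(t)+\lambda t^{q-1},\,f(a)+\lambda a^{q-1}\}$-type continuation for $t>a$. Concretely, we replace $\lambda t^{q-1}$ by $\lambda a^{q-p}t^{p-1}$ for $t\ge a$, so $\tilde f$ is subcritical and satisfies an Ambrosetti--Rabinowitz condition with exponent $\theta$.

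We then penalize outside $B_R$ in the style of del Pino--Felmer. Set
\begin{equation*}
g(x,t)=\chi_{B_R}(x)\tilde f(t)+(1-\chi_{B_R}(x))\min\{\tilde f(t),V(x)t/k\},
\end{equation*}
and study $\mathcal{L}_{K_s}u+V(x)u=g(x,u)$ in the space $E=\{u\in D^{s,2}:\int Vu^2<\infty\}$, normed via \ref{K_dois}--\ref{K_tres}. The energy $I$ has mountain pass geometry. For small $t$ we use \ref{f_um}: $f(t)\le Ct^{2^*_s-1}$ near $0$, together with the Sobolev embedding. The penalization makes $(PS)$ sequences bounded, since $\tfrac1\theta g t-G\ge -\tfrac{1}{2k}Vt^2$ outside $B_R$ with $\tfrac12-\tfrac1\theta-\tfrac1{2k}>0$. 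Compactness of $(PS)_c$ follows because outside $B_R$ the tail $\int_{|x|>r}(|u_n|_{K}^2+Vu_n^2)$ is uniformly small; this is the standard cutoff argument, using the bounded commutator of a cutoff with $\mathcal L_{K_s}$ via \ref{K_tres}. Inside balls, the embedding is compact and $\tilde f$ is subcritical. This yields a nonnegative critical point $u_\lambda$. Testing with $u^-$ gives $u_\lambda\ge 0$, and the strong/weak maximum principle gives positivity. Its mountain pass level $c_\lambda$ is bounded above independently of $\lambda$ and $a$, by comparison with the functional for $\lambda=0$ on a fixed path. Hence $\|u_\lambda\|\le C_0$ uniformly.

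Next we need an $L^\infty$ bound that is uniform in $\lambda$, in the regime where the truncation is inactive. We run a Moser iteration directly on the weak formulation. We test with $u\,u_L^{2(\beta-1)}$, where $u_L=\min\{u,L\}$, and use the inequality $(a-b)(a a_L^{2(\beta-1)}-b b_L^{2(\beta-1)})\ge c\beta^{-1}\cdots$, which is valid for general symmetric kernels. We use $g(x,u)\le C(u^{2^*_s-1}+\lambda a^{q-p}u^{p-1})$ and the uniform bound $C_0$. This gives $|u_\lambda|_\infty\le M$ with $M$ independent of $a$ and of small $\lambda$, provided $\lambda a^{q-p}\le 1$. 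Choosing $a=M$ and $\lambda_0$ with $\lambda_0M^{q-p}\le1$, the truncation $\tilde f$ coincides with $f+\lambda t^{q-1}$ on the range of $u_\lambda$. We expect this step to be the main obstacle: the iteration must be done without regularity theory, and one must control the critical-growth term at the origin by the small tail in $L^{2^*_s}$, uniformly in $\lambda$.

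Finally we remove the penalization with a comparison argument. We will show $u_\lambda(x)\le R^{N-2s}M\,\Gamma_\ast(x)$ on $|x|\ge R$. By \ref{K_cinco}, $w=MR^{N-2s}\Gamma_\ast$ satisfies $\mathcal{L}_{K_s}w\ge0$ weakly in $\mathbb{R}^N\setminus\overline B_R$ and $w\ge M\ge u$ on $B_R$. Moreover, $u$ is a subsolution of $\mathcal{L}_{K_s}u+(V-g(x,u)/u)u=0$ there with nonnegative coefficient, since $g\le Vu/k$. The weak maximum principle of \cite{duarte2} applied to $(u-w)^+$ then gives $u\le w$. The remaining task is to check that $\tilde f(u)\le Vu/k$ for $|x|\ge R$, so that $g=\tilde f$ and $u_\lambda$ solves \eqref{P}. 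By \ref{f_um}, $\tilde f(u)/u\le C(u^{2^*_s-2}+\lambda u^{q-2})\le C' u^{4s/(N-2s)}$ for $u\le M$ and $q\ge2^*_s$. This is bounded by $C'(MR^{N-2s})^{4s/(N-2s)}|x|^{-4s}=C''R^{4s}|x|^{-4s}$. By \ref{V_dois}, this is at most $V/k$ once $\Lambda>\Lambda^\ast:=kC''$. Positivity and boundedness were obtained above, which proves Theorem \ref{main}.
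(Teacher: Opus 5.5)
Your overall plan is the paper's: truncation at a level $a$, del Pino--Felmer penalization outside $B_R$, a mountain-pass level bounded by $c(I_0)$, a Moser bound, comparison with a multiple of $\Gamma_\ast$ via \ref{K_cinco}, and finally \ref{V_dois} with $(N-2s)(2^\ast_s-2)=4s$. Your endgame $a=M$, $\lambda_0=M^{-(q-p)}$ is equivalent to the paper's choice $k_0>C_\ast$.

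The genuine gap is the $L^\infty$ step as you set it up. You feed the iteration the bound $g(x,u)\le C(u^{2^\ast_s-1}+\lambda a^{q-p}u^{p-1})$ together with the energy bound, and assert a uniform $M$. That information cannot suffice. The rescaled bubbles $\varepsilon^{-(N-2s)/2}U(x/\varepsilon)$ solving $(-\Delta)^s u=u^{2^\ast_s-1}$ have fixed energy and unbounded sup norm. A Brezis--Kato argument would need $\int_{[u_\lambda>K]}u_\lambda^{2^\ast_s}\dx$ small uniformly in $\lambda$, which you have no way to obtain. The obstacle you name is also misplaced, since near the origin $t^{2^\ast_s-1}\le t^{p-1}$.

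The missing observation is Proposition~\ref{p_ge}~i): hypothesis \ref{f_um} gives the \emph{global} subcritical bound $f(t)\le A_1t^{p-1}$, hence $g_{\lambda,k}(x,t)\le C(1+\lambda k^{q-p})t^{p-1}$. The iteration potential is then $u^{p-2}\in L^{r}$ with $r=2^\ast_s/(p-2)>N/(2s)$. H\"older gives $\int u^{p-2}w^2\dx\le\|u\|_{2^\ast_s}^{p-2}\|w\|_{2r/(r-1)}^2$ with $2r/(r-1)<2^\ast_s$, so each step gains integrability. One obtains $\|u_{\lambda,k}\|_\infty\le C(1+\lambda k^{q-p})^{1/(2^\ast_s-p)}\|u_{\lambda,k}\|_{2^\ast_s}$, with $C$ depending only on $c(I_0)$ (Corollary~\ref{cor45}). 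The critical bound $f(t)\le A_2t^{2^\ast_s-1}$ belongs only at the very end, to turn the decay of $u$ into $f(u)/u\le C|x|^{-4s}$, which you do correctly.

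The comparison step has the right idea but is under-justified, and Lemma~\ref{l_max} is not the right tool. It is a whole-space dichotomy ($u_0>0$ or $u_0\equiv0$), not an exterior comparison principle. What works is the direct energy argument of Proposition~\ref{p_brutal}. With $w=MR^{N-2s}\Gamma_\ast$ one has $[(u-w)^+]^2\le[u-w,(u-w)^+]$ and $[u,(u-w)^+]\le(1/k-1)\int_{B_R^c}Vu(u-w)^+\dx\le0$, and one also needs $[w,(u-w)^+]\ge0$.

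That last inequality is where the work lies, because \ref{K_cinco} is only assumed for $\phi\in C_0^\infty(\mathbb{R}^N\setminus\overline{B}_R)$ with $\phi\ge0$. To test with $(u-w)^+$ you need three ingredients:
\begin{enumerate}
\item $\Gamma_\ast\in D^{s,2}(\mathbb{R}^N)$ (Lemma~\ref{l_aste}), so that $(u-w)^+\in E$, also using $0\le(u-w)^+\le u$;
\item hypothesis \ref{K_quatro}, so that $D^{s,2}_{K_s}=D^{s,2}$ and $\phi\mapsto[\Gamma_\ast,\phi]$ is well defined and continuous in the $D^{s,2}$ norm;
\item a density result approximating nonnegative $D^{s,2}$ functions that vanish on $B_R$ by nonnegative elements of $C_0^\infty(\mathbb{R}^N\setminus\overline{B}_R)$ (Proposition~\ref{p_dense}, built on Lemmas~\ref{l_trans} and~\ref{l_cut}).
\end{enumerate}
Your proposal never invokes \ref{K_quatro}, and this is exactly where it is needed.
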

Theorem \ref{main} guarantees the existence of a positive bounded solution in a general integro-differential framework. To the best of our knowledge, this is the first result in this direction for such a class of operators and potentials. To derive this solution, we adapt the strategy of \cite{uber1}, which synthesizes the techniques from \cite{alv} and \cite{zbMATH07061025}. In particular, our approach relies on the penalization method introduced by del Pino and Felmer \cite{fel}, which consists of modifying the nonlinearity to define a suitable auxiliary problem.
	\subsection{Remarks on the assumptions} Before we proceed, some comments on our hypotheses are necessary.
	\begin{enumerate}[label=\bf \roman*):]
		\item Our hypotheses allow for smooth perturbations of the classical fractional Laplacian kernel, provided the perturbative term vanishes at the origin. More precisely, let $a \in C^\infty(\mathbb{R}^N)$ be a radial cut-off function such that $0 \leq a(x) \leq 1$ in $\mathbb{R}^N$, with $a(x) = 0,$ for $|x| \leq R/2,$ and $a(x) = 1,$ for $|x| \geq R$. If we define the perturbed kernel $K_s(x) = (1 + a(x))|x|^{-(N+2s)}$, we show in Appendix \ref{s_app_ex} that $K_s$ satisfies conditions \ref{K_um}--\ref{K_cinco}.
		\item Hypotheses \ref{V_um}--\ref{V_dois} hold for the potential $V(x) = M (1+|x|^{r})^{-1},$ with $0 < r < 4s$ and $M>0$ sufficiently large.
		\item The function defined by $f(t) = t^{2^{\ast}_s - 1 }(1 + t^{2^{\ast}_s - p})^{-1},$ for $t>0$ (and zero otherwise), verifies \ref{f_um}--\ref{f_dois}.
		\item We point out that condition \ref{K_cinco} simply states that $\Gamma_\ast$ is a weak supersolution for the operator $\mathcal{L}_{K_s}$ outside the ball $\overline{B}_R$. That is, the inequality $\mathcal{L}_{K_s}(\Gamma_\ast) \geq 0$ in $\mathbb{R}^N \setminus \overline{B}_R$ means that
		\begin{equation*}
			\int_{\mathbb{R}^{N}}\int_{\mathbb{R}^{N}}(\Gamma_\ast(x)-\Gamma_\ast(y))(\phi(x)-\phi(y))K_s(x-y) \dxdy \geq 0,\quad \forall \, \phi \in C^\infty _0 (\mathbb{R}^N \setminus \overline{B}_R ),\ \phi \geq 0.
		\end{equation*}
		\item When $K_s(x) = C_{N,s}|x|^{-(N+2s)}$ is the standard fractional Laplacian kernel, hypothesis \ref{K_cinco} is readily verified. This follows from the fact that $\Gamma(x) = |x|^{-(N-2s)}$ is the fundamental solution of the fractional Laplacian (see \cite{MR5039745} and Appendix \ref{s_app_ex}).
		
	\end{enumerate}
	
	\subsection{Outline} The paper is organized as follows. In Section \ref{s_preli}, we present the variational framework and preliminary results required for our arguments. Section \ref{s_aux} is dedicated to the formulation of the auxiliary problem via the penalization method. In Section \ref{s_decay}, we proceed to establish the uniform boundedness of the auxiliary solutions. In Section \ref{s_proof}, we overcome the aforementioned lack of regularity by proving the asymptotic decay at infinity, which allows us to complete the proof of Theorem \ref{main}. In Appendix \ref{s_app_ex}, we construct a nontrivial example of a kernel satisfying our conditions, and in Appendix \ref{s_app_inequality}, we prove a technical inequality needed to establish the uniform bound obtained in Section \ref{s_decay}. Conditions \ref{f_um}--\ref{f_dois}, \ref{V_um}--\ref{V_dois}, and \ref{K_um}--\ref{K_tres} are assumed to hold throughout the text, while \ref{K_quatro} and \ref{K_cinco} are required only in Section \ref{s_proof}.\\
	
	\noindent \textbf{Notation:} In this paper, we use the following notations:
	\begin{itemize}
		\item The usual norm in $L^{p}(\mathbb{R}^N)$ is denoted by $\|\cdot  \| _p;$
		\item $B_R(x_0)$ is the $N$-ball of radius $R$ and center $x_0;$ $B_R:=B_R(0);$
		\item  $C_i$ denotes (possibly different) any positive constant;
		\item $\mathcal{X}_A$ is the characteristic function of the set $A \subset \mathbb{R}^N;$
		\item $A^c =\mathbb{R}^N \setminus A,$ for $A \subset \mathbb{R}^N;$ 
		\item $|A|$ is the Lebesgue measure of the measurable set $A \subset \mathbb{R}^N;$
		\item $a_n = b_n + o_n (1)$ if and only if $\lim _{n \rightarrow \infty }(a_n - b_n) = 0;$
		\item For any measurable function $\phi : \Omega \rightarrow \mathbb{R}$, we define $[\phi > \lambda] := \{ x \in \Omega : \phi(x) > \lambda \}.$ Analogous notation is adopted for other types of inequalities;
		\item $\phi ^+(x) = \max\{0,\phi (x) \}$ and $\phi ^{-} (x) = \max\{ 0, -\phi (x)  \}.$
	\end{itemize}
	\section{Preliminaries}\label{s_preli}
For $s \in (0,1)$, we denote by $D^{s,2}(\mathbb{R}^{N})$ the  homogeneous fractional Sobolev space, which is defined as 
\begin{equation*}
	D^{s,2}(\mathbb{R}^{N}):=\left\{u \in L^{2^{\ast}_s}(\mathbb{R}^{N}): \int_{\mathbb{R}^{N}}\int_{\mathbb{R}^{N}}\frac{(u(x)-u(y))^{2}}{|x-y|^{N+2s}}\dxdy<+ \infty \right\}.
\end{equation*}
It is known that $D^{s,2}(\mathbb{R}^{N})$ is a Hilbert space with inner product
	\begin{equation*}
		[u,v]_{D^{s,2}(\mathbb{R}^{N})}=\int_{\mathbb{R}^{N}}\int_{\mathbb{R}^{N}}\frac{(u(x)-u(y))(v(x)-v(y))}{|x-y|^{N+2s}}\dxdy,
	\end{equation*}
and norm $\|u\|_{D^{s,2}(\mathbb{R}^N)} = \sqrt{[u,u]_{D^{s,2}(\mathbb{R}^{N})} }.$ We also take into account $D_{K_s}^{s,2}(\mathbb{R}^{N})$ as the subspace of $L^{2^\ast _s} (\mathbb{R}^N)$ defined as the space of measurable functions $u$ such that the map $(x,y)\mapsto(u(x)-u(y))\sqrt{K_s(x-y)}$ belongs to $L^{2}(\mathbb{R}^{N}\times \mathbb{R}^{N})$. It is known (cf. \cite{ambrosio2016,duarte1,iane1}) that $D_{K_s}^{s,2}(\mathbb{R}^{N})$ is characterized as the completion of $C_0^\infty  (\mathbb{R}^N) $ with respect to the norm
\begin{equation*}
	[ u ]:=\| u \|_{D_{K_s}^{s,2}(\mathbb{R}^{N})} :=\left(\int_{\mathbb{R}^{N}}\int_{\mathbb{R}^{N}}(u(x)-u(y))^{2}K_s(x-y)\dxdy\right)^{\frac{1}{2}}.	
\end{equation*}
Moreover, $(D_{K_s}^{s,2}(\mathbb{R}^{N}), \|\cdot\|_{D_{K_s}^{s,2}(\mathbb{R}^{N})})$ is a Hilbert space equipped with the inner product
\begin{equation*}
	[u,v]:=[u,v]_{D_{K_s}^{s,2}(\mathbb{R}^{N})}:=\int_{\mathbb{R}^{N}}\int_{\mathbb{R}^{N}}(u(x)-u(y))(v(x)-v(y))K_s(x-y) \dxdy.
\end{equation*}
The space $D_{K_s}^{s,2}(\mathbb{R}^{N})$ embeds continuously into $D^{s,2}(\mathbb{R}^{N})$ (cf. \ref{K_dois}) and, consequently, into $L^{2^{\ast}_s}(\mathbb{R}^{N})$. Due to the vanishing nature of the potential $V$ at infinity, the natural energy space for our problem is defined as
\begin{equation*}
E=\left\{u \in D_{K_s}^{s,2}(\mathbb{R}^{N}): \int_{\mathbb{R}^{N}}V(x)u^{2}dx< + \infty \right\}.
\end{equation*}
The space $E$ is a Hilbert space, which is continuously embedded in $L^{2^\ast _s} (\mathbb{R}^N),$ when equipped with the inner product
\begin{equation*}
	(u,v) = \int_{\mathbb{R}^{N}}\int_{\mathbb{R}^{N}}(u(x)-u(y))(v(x)-v(y))K_s(x-y) \dxdy + \int_{\mathbb{R}^N}V(x) uv \dx,
\end{equation*}
and the associated norm $\| u \| = \sqrt{(u,u)}.$ On the other hand, for any measurable sets $A, B \subseteq \mathbb{R}^{N}$ and functions $u,v \in E$, we adopt the following notation for the nonlocal term:
\begin{equation*}
	[u,v]_{A\times B}=\int_{A}\int_{B}(u(x)-u(y))(v(x)-v(y))K_s(x-y)dxdy.
\end{equation*}
For the sake of conciseness, we denote $[u,v]_{\mathbb{R}^{N} \times \mathbb{R}^{N}}$ simply by $[u,v]$.
	\begin{proposition}\label{p_ge} We have the following consequences of hypotheses \ref{f_um}--\ref{f_dois}:
		\begin{enumerate}[label=\bf \roman*):]
			\item There are constants $A_1,$ $A_2>0$ such that $f(t) \leq A_1 t^{p-1}$ and $f(t) \leq A_2 t ^{2^\ast _s -1}.$
			\item There exist $C_1, C_2 > 0$ such that $F(t)\geq C_1t^{\theta}-C_2.$
		\end{enumerate}
	\end{proposition}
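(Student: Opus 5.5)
Both items are elementary consequences of \ref{f_um}--\ref{f_dois} together with the sign conventions on $f$. Throughout we work with $t>0$. For $t\le 0$ we have $f(t)=0$, so the inequalities in i) hold trivially whenever the right-hand sides are read as $A_1|t|^{p-1}$ and $A_2|t|^{2^\ast_s-1}$. Likewise $F(t)=0$ for $t\le 0$, so ii) is only needed for $t>0$, or for $|t|$ in place of $t$; on $t\le 0$ it holds for $t\in[-1,0]$ as soon as $C_2\ge C_1$.

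\textbf{Item i).} We split $(0,\infty)$ into three regions using \ref{f_um}.
\begin{itemize}
\item Since $\limsup_{t\to0^+} f(t)/t^{2^\ast_s-1}<\infty$, there are $\delta>0$ and $M_0$ with $f(t)\le M_0 t^{2^\ast_s-1}$ on $(0,\delta]$.
\item Since $\limsup_{t\to\infty} f(t)/t^{p-1}<\infty$, there are $T>\delta$ and $M_\infty$ with $f(t)\le M_\infty t^{p-1}$ on $[T,\infty)$.
\item On the compact set $[\delta,T]$, $f$ is continuous, so $f\le M$ there.
\end{itemize}
To obtain $f(t)\le A_1t^{p-1}$, we use $p<2^\ast_s$:
\begin{itemize}
\item on $(0,\delta]$, $t^{2^\ast_s-1}=t^{p-1}t^{2^\ast_s-p}\le \delta^{2^\ast_s-p}t^{p-1}$;
\item on $[\delta,T]$, $f(t)\le M\delta^{1-p}t^{p-1}$.
\end{itemize}
Symmetrically, to obtain $f(t)\le A_2t^{2^\ast_s-1}$:
\begin{itemize}
\item on $[T,\infty)$, $t^{p-1}\le T^{p-2^\ast_s}t^{2^\ast_s-1}$, because $t\ge T$ and $p-2^\ast_s<0$;
\item on $[\delta,T]$, $f(t)\le M\delta^{1-2^\ast_s}t^{2^\ast_s-1}$.
\end{itemize}
Taking maxima of the constants in each case gives $A_1$ and $A_2$.

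\textbf{Item ii).} This is the standard Ambrosetti--Rabinowitz argument. First choose $t_0>0$ with $F(t_0)>0$. Such a $t_0$ exists because $f\ge0$ on $(0,\infty)$ and $f$ is continuous and nonzero; since $f$ vanishes on $(-\infty,0]$, it must be positive somewhere on $(0,\infty)$, hence on an open interval. For $t\ge t_0$ we have $F(t)\ge F(t_0)>0$, and \ref{f_dois} gives $\frac{d}{dt}\log F(t)=f(t)/F(t)\ge \theta/t$. Integrating from $t_0$ to $t$ yields
\[
F(t)\ge F(t_0)t_0^{-\theta}t^\theta=:C_1t^\theta .
\]
For $0<t\le t_0$ we have $F(t)\ge0\ge C_1t^\theta-C_1t_0^\theta$, so $C_2:=C_1\max\{t_0^\theta,1\}$ works on the whole range considered.

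\textbf{Main obstacle.} The only delicate point is making sure $F(t_0)>0$ for some $t_0>0$. This is what requires the hypothesis that $f$ is nonzero, combined with $f\ge0$ on $(0,\infty)$ and $f=0$ on $(-\infty,0]$. Everything else is routine bookkeeping of constants.
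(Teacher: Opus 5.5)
Your proof is correct and is exactly the routine verification the paper leaves out (Proposition~\ref{p_ge} is stated there without proof): the three-range splitting using $p<2^\ast_s$ for i), and integrating $f/F\ge\theta/t$ from a point $t_0>0$ with $F(t_0)>0$ for ii). You rightly trace the existence of $t_0$ to $f\not\equiv 0$, $f\ge 0$ on $(0,\infty)$ and $f=0$ on $(-\infty,0]$.

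The only thing to tidy is your aside on $t\le 0$. Since $F\equiv 0$ there, the version with $|t|^\theta$ fails as $t\to-\infty$, so ii) is genuinely a statement for $t\ge 0$. That is all the paper needs, since it applies ii) only to $t\varphi$ with $t>0$ and $\varphi\ge 0$.
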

Weak solutions to Eq. \eqref{P} are defined as functions $u \in E$ satisfying
\begin{equation*}
	(u,v) = \int_{\mathbb{R}^{N}} (f(u) + \lambda |u|^{q-2}u)v \dx, \quad \forall v \in E.
\end{equation*}
Formally, the Euler-Lagrange functional associated with \eqref{P} is given by
\begin{equation*}
	I(u) = \frac{1}{2}\|u\|^{2} - \int_{\mathbb{R}^{N}} F(u) \dx - \frac{\lambda}{q} \int_{\mathbb{R}^N} |u|^{q} \dx.
\end{equation*}
However, for $\lambda \neq 0$, this functional is not well-defined on the entire space $E$ because the exponent $q$ is supercritical, i.e., $q > 2^\ast _s$. To overcome the lack of integrability, we consider a modified problem alongside the auxiliary functional $I_0: E \rightarrow \mathbb{R}$. This functional, which serves as a reference for our energy estimates, is defined by
\begin{equation*}
	I_0(u) = \frac{1}{2}\|u\|^{2} - \int_{\mathbb{R}^{N}} F(u) \dx.
\end{equation*}
\begin{proposition}
		$I_0 \in C^1 (E,\mathbb{R})$ and has the mountain pass geometry, more precisely
		\begin{enumerate}[label=\bf \roman*):]
			\item There exist $r_0,$ $b_0>0$ such that $I_0(u) \geq b_0,$ whenever $\|u\| = r_0;$
			\item There is $e_0 \in E$ with $\| e_0 \| > r_0$ and $I_0(e_0) < 0.$
		\end{enumerate}
		In particular, one can consider the related minimax level given by
		\begin{equation*}
			c(I_0)=  \inf_{\gamma \in \mathcal{M} ( I_0) } \max_{t \in [0,1]}I_0(\gamma(t)),
		\end{equation*}
		where $\mathcal{M} ( I_0)=\left\{\gamma \in C([0,1],E): \gamma(0)=0 \mbox{ and } I_0(\gamma(1))<0\right\}.$
	\end{proposition}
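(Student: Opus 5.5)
We argue in three steps: show that $I_0$ is of class $C^1$ on $E$, obtain the lower bound on a small sphere from the critical growth of $f$ near the origin, and produce a point of negative energy from the superquadratic growth of $F$. The main tool throughout is Proposition \ref{p_ge}.

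\textbf{Regularity.} Let $\Psi(u)=\int_{\mathbb{R}^N}F(u)\dx$. By Proposition \ref{p_ge}\,i), $0\le f(t)\le A_2 t^{2^\ast_s-1}$ for $t>0$, and $f\equiv 0$ on $(-\infty,0]$. Hence $|f(t)|\le A_2|t|^{2^\ast_s-1}$ and $|F(t)|\le \frac{A_2}{2^\ast_s}|t|^{2^\ast_s}$ for every $t\in\mathbb{R}$. The embedding $E\hookrightarrow L^{2^\ast_s}(\mathbb{R}^N)$ is continuous, so $\Psi$ is well defined on $E$. The standard Nemytskii argument then applies: the map $u\mapsto f(u)$ is continuous from $L^{2^\ast_s}$ into $L^{(2^\ast_s)'}$. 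This follows from Krasnosel'skii's theorem, or directly from continuity of $f$ together with dominated convergence along subsequences converging a.e. Consequently $\Psi\in C^1(L^{2^\ast_s},\mathbb{R})$ with $\Psi'(u)v=\int f(u)v\dx$, and composing with the embedding gives $\Psi\in C^1(E,\mathbb{R})$. The term $\frac12\|u\|^2$ is smooth because $E$ is a Hilbert space, so $I_0\in C^1(E,\mathbb{R})$.

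\textbf{Geometry i).} Let $S$ denote the embedding constant, so that $\|u\|_{2^\ast_s}\le S\|u\|$. Then
\begin{equation*}
I_0(u)\ge \tfrac12\|u\|^2-\tfrac{A_2}{2^\ast_s}S^{2^\ast_s}\|u\|^{2^\ast_s}.
\end{equation*}
Since $2^\ast_s>2$, we may choose $r_0>0$ small enough that the right-hand side equals some $b_0>0$ when $\|u\|=r_0$. This step uses only the critical behaviour of $f$ at $0$ and at infinity, which is crucial because $V$ gives no control of the $L^2$ norm.

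\textbf{Geometry ii).} This is the step that needs some care. Fix a nonnegative $\varphi\in C_0^\infty(\mathbb{R}^N)\setminus\{0\}$. We have $\varphi\in E$, because $V\in L^\infty_{\loca}$ and $C_0^\infty$ is contained in $D^{s,2}_{K_s}$ by construction. By Proposition \ref{p_ge}\,ii), for $t>0$,
\begin{equation*}
I_0(t\varphi)\le \tfrac{t^2}{2}\|\varphi\|^2-C_1t^\theta\int_{\supp\varphi}\varphi^\theta\dx+C_2|\supp\varphi|.
\end{equation*}
The constant $C_2$ is integrated only over the compact support, so it contributes a finite amount; this is exactly why a compactly supported $\varphi$ is used. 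Because $\theta>2$, the right-hand side tends to $-\infty$ as $t\to\infty$. Choosing $t_0$ large gives $e_0=t_0\varphi$ with $I_0(e_0)<0$ and $\|e_0\|>r_0$.

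Finally, $I_0(0)=0$ and $e_0$ satisfies $I_0(e_0)<0$, so the straight path $t\mapsto te_0$ lies in $\mathcal{M}(I_0)$, which is therefore nonempty. Any $\gamma\in\mathcal{M}(I_0)$ starts at $0$ and ends at a point with $I_0(\gamma(1))<0$. By i) that endpoint cannot lie in the closed ball of radius $r_0$, so by continuity $\gamma$ crosses the sphere $\|u\|=r_0$. Hence $c(I_0)\ge b_0>0$, and $c(I_0)$ is finite.
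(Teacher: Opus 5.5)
Your proposal is correct and follows essentially the same route as the paper: the lower bound on a small sphere comes from $F(t)\le \frac{A_2}{2^\ast_s}|t|^{2^\ast_s}$ together with the embedding $E\hookrightarrow L^{2^\ast_s}(\mathbb{R}^N)$, and the negative-energy point comes from evaluating $I_0(t\varphi)$ for a nonnegative $\varphi\in C^\infty_0(\mathbb{R}^N)$ using $F(t)\ge C_1t^\theta-C_2$ on $\supp\varphi$. Your Nemytskii argument for $I_0\in C^1(E,\mathbb{R})$ and your check that $c(I_0)\ge b_0>0$ are standard details the paper leaves implicit; for the latter you need the full estimate $I_0(u)\ge \|u\|^2\bigl(\frac12-C\|u\|^{2^\ast_s-2}\bigr)\ge 0$ on the whole closed ball $\|u\|\le r_0$, not just the statement of i), which your choice of small $r_0$ does provide.
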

	\begin{proof}
We use Proposition \ref{p_ge}, combined with the embedding $E \hookrightarrow L^{2^*_s}(\mathbb{R}^N)$, to establish the required properties.

\textit{i)}: Let $u \in E.$ Then
\begin{equation*}
	I_0(u) \geq \frac{1}{2}\| u \| ^2 -\frac{A_2}{2^\ast _s} \int _{\mathbb{R}^N} |u|^{2_s^\ast }\dx \geq \| u \|^2 \left(\frac{1}{2} - C \| u \|^{2^\ast _s - 2} \right) >0,
\end{equation*}
for a suitable $C>0$ and $\| u \|>0$ small enough.

\textit{ii)}: Consider $\varphi \in C^\infty _0(\mathbb{R}^N)\setminus \{ 0 \},$ with $\varphi \geq 0,$ and denote $S = \supp(\varphi).$ Since $\theta \in (2,p),$ we have
\begin{equation*}
	I_0(t \varphi ) \leq \frac{t^2}{2} \| \varphi \|^2 - C_1 t^{\theta } \int_{S} \varphi ^{\theta } \dx + C_2 | S | \rightarrow - \infty,\text{ as }t \rightarrow \infty. \qedhere
\end{equation*}
	\end{proof}
	\section{Study of the auxiliary problem}\label{s_aux}
For each $k \in \mathbb{N}$ and $\lambda > 0$, we introduce the continuous function $f_{\lambda, k} : \mathbb{R} \rightarrow \mathbb{R}$ defined by
\begin{equation*}
	f_{\lambda, k}(t)=
	\left\{
	\begin{aligned}
	&0,  &&\text{ if } t<0, \\
	&f(t)+\lambda t^{q-1}, &&\text{ if } 0\leq t\leq k, \\
	&f(t)+\lambda k^{q-p}t^{p-1},&&\text{ if } t\geq k.
	\end{aligned}	
	\right.
\end{equation*}
Now, let $\nu = 2\theta / (\theta - 2)$, where $\theta > 2$ is given by \ref{f_dois}. To construct an appropriate auxiliary problem that recovers compactness at infinity, we first define the following localized nonlinearity:
\begin{equation*}
	\bar{f}_{\lambda, k}(x,t)=
	\left\{ 
	\begin{aligned}
		&f_{\lambda, k}(t), &&\text{ if} \quad  \nu f_{\lambda, k}(t) \leq V(x)t\text{ and }  t\geq 0, \\
		& \frac{1}{\nu} V(x)t ,&& \text{ if} \quad \nu f_{\lambda, k}(t)>V(x)t\text{ and }  t \geq 0, \\
		&0, && \text{ if} \quad x \in \mathbb{R}^N \text{ and }t < 0.
	\end{aligned}
	\right.
\end{equation*}
along with the Carath\'{e}odory function $g_{\lambda, k} : \mathbb{R}^N \times \mathbb{R} \rightarrow \mathbb{R},$
\begin{equation*}
	g_{\lambda, k}(x,t) =
	\left\{ 
	\begin{aligned}
		&f_{\lambda, k}(t), && \text{ if }  |x|\leq R, \\
		&\bar{f}_{\lambda, k}(x,t), && \text{ if }  |x|>R.
	\end{aligned}
	\right.
\end{equation*}
Consequently, we are led to consider the modified auxiliary problem
\begin{equation*}\tag{$P_{\lambda,k}$}\label{Plambda}
	\left\{  
	\begin{aligned}
		&\mathcal{L}_{K_s}u+V(x)u=g_{\lambda, k}(x,u) \text{ in }\mathbb{R}^{N}, \\
		&u \in E.
	\end{aligned}
	\right.
\end{equation*}
Let $F_{\lambda, k}(t) = \int_{0}^t f_{\lambda, k}(\tau)\dtau$ and $G_{\lambda, k}(x,t)=\int_{0}^{t}g_{\lambda, k}(x,\tau)\dtau.$ The auxiliary functions satisfy the following properties:
\begin{remark}\label{r1}
	\begin{enumerate}[label=\bf \roman*):]
		\item In view of \ref{f_um}, we can find $C_1, C_2 > 0$ satisfying
		\begin{equation*}
			f_{\lambda, k}(t)\leq C_{1}(1+\lambda k^{q-p})t^{p-1}\quad \text{and}\quad  f_{\lambda, k}(t)\leq C_{2}(1+\lambda k^{q-p})t^{2^{\ast}_s-1}.
		\end{equation*}
		\item We have
		\begin{equation*}
			tf_{\lambda, k}(t)-\theta F_{\lambda, k}(t) = 
			\left\{ 
			\begin{aligned}
				&0,&& \mbox{ if } t\leq 0, \\
				&f(t)t-\theta F(t)+\lambda\left(\frac{q-\theta}{q}\right)t^{q},&&  \mbox{ if }0\leq t\leq k, \\
				& f(t)t-\theta F(t)+\lambda k^{q-p}\frac{t^p}{p}(p-\theta)+\theta \lambda k^q\left(\frac{q-p}{qp}\right), && \text{if } t\geq k.
			\end{aligned}
			\right.
		\end{equation*}
		In particular, hypothesis \ref{f_dois} implies $tf_{\lambda, k}(t)-\theta F_{\lambda, k}(t) \geq 0$. Moreover, $F_{\lambda,k} (t) \geq F(t).$
		\item If $|x|\leq R$, then $g_{\lambda, k}(x,t)=f_{\lambda,k}(t)$ and $G_{\lambda, k}(x,t)=F_{\lambda,k}(t).$ 
		\item If $|x| > R$ and $t \geq 0,$ then $\bar{f}_{\lambda, k}(x,t) \leq f_{\lambda, k}(t).$ Furthermore,
		\begin{equation*}
			g_{\lambda, k}(x,t) \leq \frac{V(x)}{\nu}t, \quad g_{\lambda, k}(x,t)t \leq \frac{V(x)}{\nu}t^2, \quad \text{and} \quad G_{\lambda, k}(x,t)\leq \frac{V(x)}{2 \nu}t^{2}.
		\end{equation*}
		In particular, we have $G_{\lambda, k}(x,t) \leq F_{\lambda,k}(t)$ for all $|x| > R.$
		\end{enumerate}
\end{remark}
The energy functional $J_{\lambda, k}: E \rightarrow \mathbb{R}$ associated with Eq. \eqref{Plambda} is defined as
\begin{equation*}
	J_{\lambda, k}(u) = \frac{1}{2}\|u\|^{2} - \int_{\mathbb{R}^{N}} G_{\lambda, k}(x,u)\dx.
\end{equation*}
Standard arguments involving Remark \ref{r1} ensure that $J_{\lambda, k}$ is well defined with $J_{\lambda, k} \in C^{1}(E, \mathbb{R})$, and its Fréchet derivative is given by
\begin{equation}\label{def_ws}
	J_{\lambda, k}'(u) \cdot  v  = (u,v) - \int_{\mathbb{R}^{N}} g_{\lambda, k}(x,u)v\dx, \quad \forall \, u, v \in E.
\end{equation}
Solutions of \eqref{Plambda} are defined as critical points of $J_{\lambda, k}.$
\begin{proposition}
	\begin{enumerate}[label=\bf \roman*):]
		\item There exist $r_{\lambda, k},$ $b_{\lambda, k}>0$ such that $J_{\lambda, k}(u) \geq b_{\lambda, k},$ whenever $\|u\| = r_{\lambda, k};$
		\item There is $e_{\lambda, k} \in E$ with $\| e_{\lambda, k} \| > r_{\lambda, k}$ and $J_{\lambda, k}(e_{\lambda, k}) < 0.$
	\end{enumerate}
The related minimax level is defined by
	\begin{equation*}
		c(J_{\lambda, k} )= \inf_{  \gamma \in \mathcal{M}( J_{\lambda, k }) }\max_{ t \in [0,1]  } J_{\lambda, k}(\gamma(t)),
	\end{equation*}
where $\mathcal{M}( J_{\lambda, k }) =\left\{\gamma \in C([0,1],E): \gamma(0)=0 \mbox{ and } J_{\lambda, k}(\gamma(1))<0\right\}.$
\end{proposition}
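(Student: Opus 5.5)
The plan is to follow the proof of the mountain pass geometry for $I_0$, using Remark \ref{r1} to compare $G_{\lambda,k}$ with $F_{\lambda,k}$ from above and with $F$ from below. The embedding $E\hookrightarrow L^{2^\ast_s}(\mathbb{R}^N)$ handles the small-norm estimate, and a compactly supported test function inside $B_R$ gives the negative value.

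\textit{i)}: Let $u\in E$. For $|x|\le R$, Remark \ref{r1} iii) gives $G_{\lambda,k}(x,u)=F_{\lambda,k}(u)$. For $|x|>R$, Remark \ref{r1} iv) gives $G_{\lambda,k}(x,u)\le F_{\lambda,k}(u)$; for $u<0$ both sides vanish. Integrating the second bound in Remark \ref{r1} i), we get $F_{\lambda,k}(t)\le \frac{C_2(1+\lambda k^{q-p})}{2^\ast_s}|t|^{2^\ast_s}$ for all $t\in\mathbb{R}$. Hence
\begin{equation*}
J_{\lambda,k}(u)\ge \frac12\|u\|^2-\frac{C_2(1+\lambda k^{q-p})}{2^\ast_s}\|u\|_{2^\ast_s}^{2^\ast_s}\ge \|u\|^2\Big(\frac12 - C_{\lambda,k}\|u\|^{2^\ast_s-2}\Big),
\end{equation*}
where $C_{\lambda,k}>0$ depends on $\lambda$, $k$ and the embedding constant. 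Since $2^\ast_s>2$, we can choose $r_{\lambda,k}>0$ with $C_{\lambda,k}r_{\lambda,k}^{2^\ast_s-2}\le 1/4$. Then $b_{\lambda,k}=r_{\lambda,k}^2/4$ works.

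\textit{ii)}: Take $\varphi\in C_0^\infty(B_R)\setminus\{0\}$ with $\varphi\ge0$. This is the only point requiring care: outside $B_R$ the penalized term $G_{\lambda,k}$ is bounded by $V(x)t^2/(2\nu)$, which gives no superquadratic growth. Restricting the support to $B_R$ removes this issue. For $t>0$, Remark \ref{r1} iii) and ii) give $G_{\lambda,k}(x,t\varphi)=F_{\lambda,k}(t\varphi)\ge F(t\varphi)$. Proposition \ref{p_ge} ii) then yields
\begin{equation*}
J_{\lambda,k}(t\varphi)\le \frac{t^2}{2}\|\varphi\|^2 - C_1t^\theta\int_{B_R}\varphi^\theta\dx + C_2|B_R|.
\end{equation*}
Note that $\varphi\in E$, because $\varphi\in C_0^\infty$, $V\in L^\infty_{\loca}$ by \ref{V_um}, and the Gagliardo seminorm is finite by \ref{K_tres}.

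Since $\theta>2$, the right-hand side tends to $-\infty$ as $t\to\infty$. Choose $t_0$ large enough that $t_0\|\varphi\|>r_{\lambda,k}$ and $J_{\lambda,k}(t_0\varphi)<0$, and set $e_{\lambda,k}=t_0\varphi$. Finally, the path $\gamma(t)=t\,e_{\lambda,k}$ lies in $\mathcal{M}(J_{\lambda,k})$, so this class is nonempty and the minimax level $c(J_{\lambda,k})$ is well defined, with $c(J_{\lambda,k})\ge b_{\lambda,k}>0$.
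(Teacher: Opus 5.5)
Your proposal is correct and follows the paper's proof essentially step for step. For i) you use the bound $G_{\lambda,k}\le F_{\lambda,k}\le \frac{C_2}{2^\ast_s}(1+\lambda k^{q-p})|t|^{2^\ast_s}$ together with the embedding $E\hookrightarrow L^{2^\ast_s}(\mathbb{R}^N)$; for ii) you take a nonnegative test function supported in $B_R$ and apply $F_{\lambda,k}\ge F$ and Proposition \ref{p_ge} ii). Your extra checks that $\varphi\in E$ and that $c(J_{\lambda,k})\ge b_{\lambda,k}>0$ are valid additions the paper leaves implicit.
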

\begin{proof}
	Clearly, Remark \ref{r1} implies the following inequality $G_{\lambda, k } (x,t) \leq  (C_2/2^\ast _s) (1+\lambda k ^{q-p}) t^{2^\ast _s}.$ This inequality together with the embedding $E \hookrightarrow L^{2^\ast _s} (\mathbb{R}^N)$ leads to
	\begin{equation*}
		J_{\lambda , k } (u) \geq  \| u \|^2 \left(   \frac{1}{2}- C_{\lambda,k} \| u \|^{2^\ast _s -2} \right) >0,
	\end{equation*}
	for $\| u \| $ sufficiently small. Conversely, let $\varphi \in C^\infty _0(\mathbb{R}^N)\setminus \{ 0 \}$ be such that $\varphi \geq 0$ and set $S  = \supp(\varphi)\subset B_R.$ Recalling that $F_{\lambda, k} (t) \geq F(t)$ and using Proposition \ref{p_ge}, we obtain
	\begin{equation*}
		J_{\lambda, k} (t \varphi ) \leq \frac{t^2}{2} \| \varphi \|^2 - C_1 t^{\theta } \int _{S} \varphi ^{\theta } \dx + C_2 |S|\rightarrow - \infty,\text{ as }t \rightarrow \infty. \qedhere
	\end{equation*}
\end{proof}
Consequently, the Mountain Pass Theorem (without the Palais–Smale condition) provides the existence of a sequence $(u_{n}) \subset E$ such that
\begin{equation}\label{ps}
	J_{\lambda, k}(u_{n}) \rightarrow c_{\lambda, k} \quad \text{and} \quad J_{\lambda, k}'(u_{n}) \rightarrow 0 \text{ in } E^{\ast}.
\end{equation}
	\begin{lemma}\label{l_psbounded}
		The sequence $(u_{n })$ is bounded.
	\end{lemma}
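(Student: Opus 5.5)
The plan is the standard del Pino--Felmer computation: evaluate $J_{\lambda,k}(u_n)-\frac{1}{\theta}J_{\lambda,k}'(u_n)\cdot u_n$ and use the splitting of $\mathbb{R}^N$ into $B_R$ and $B_R^c$ built into $g_{\lambda,k}$. From \eqref{ps}, for $n$ large we have
\begin{equation*}
c_{\lambda,k}+1+\|u_n\| \geq J_{\lambda,k}(u_n)-\frac{1}{\theta}J_{\lambda,k}'(u_n)\cdot u_n
=\Big(\frac12-\frac1\theta\Big)\|u_n\|^2+\int_{\mathbb{R}^N}\Big(\frac1\theta g_{\lambda,k}(x,u_n)u_n-G_{\lambda,k}(x,u_n)\Big)\dx .
\end{equation*}
We split the last integral over $B_R$ and $B_R^c$.

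On $B_R$, Remark \ref{r1} iii) gives $g_{\lambda,k}=f_{\lambda,k}$ and $G_{\lambda,k}=F_{\lambda,k}$. By Remark \ref{r1} ii), $\frac1\theta f_{\lambda,k}(t)t-F_{\lambda,k}(t)\geq 0$ for all $t$, so this part of the integral is nonnegative and can be dropped.

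On $B_R^c$, the term $\frac1\theta g_{\lambda,k}(x,u_n)u_n$ is nonnegative, since $g_{\lambda,k}(x,t)=0$ for $t<0$ and $g_{\lambda,k}\geq 0$ otherwise. Remark \ref{r1} iv) gives $G_{\lambda,k}(x,u_n)\leq \frac{V(x)}{2\nu}u_n^2$. Hence this part of the integral is bounded below by $-\frac{1}{2\nu}\int_{B_R^c}V u_n^2\dx\geq -\frac{1}{2\nu}\|u_n\|^2$.

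The key point, and the only place needing care, is the choice $\nu=2\theta/(\theta-2)$. It gives $\frac{1}{2\nu}=\frac{\theta-2}{4\theta}=\frac12\big(\frac12-\frac1\theta\big)$. Combining the two estimates yields
\begin{equation*}
\frac12\Big(\frac12-\frac1\theta\Big)\|u_n\|^2\leq c_{\lambda,k}+1+\|u_n\|,
\end{equation*}
a quadratic inequality in $\|u_n\|$ with positive leading coefficient, since $\theta>2$. Therefore $(u_n)$ is bounded.

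Only two minor points need checking. First, $J_{\lambda,k}'(u_n)\cdot u_n\leq \|J_{\lambda,k}'(u_n)\|_{E^*}\|u_n\|\leq \|u_n\|$ for large $n$. Second, the identity in Remark \ref{r1} ii) gives nonnegativity on each branch of $f_{\lambda,k}$, including $t\geq k$, where $p>\theta$ and $q>p$ are used. No obstacle beyond this bookkeeping is expected.
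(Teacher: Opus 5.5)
Your proof is correct and follows the paper's argument. You evaluate $J_{\lambda,k}(u_n)-\frac1\theta J_{\lambda,k}'(u_n)\cdot u_n$, discard the $B_R$ contribution via Remark \ref{r1} ii)--iii), and control the $B_R^c$ contribution by $G_{\lambda,k}(x,t)\le \frac{V(x)}{2\nu}t^2$ together with $\nu=2\theta/(\theta-2)$.

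Your constant $\frac{\theta-2}{4\theta}$ is the right one. The paper's last inequality, which claims the lower bound $\frac{\theta-2}{2\theta}\|u\|^2$, drops the nonpositive term $-\frac{1}{2\nu}\int_{B_R^c}V(x)u^2\dx$ without justification. This does not affect boundedness; it only changes the numerical constant carried into \eqref{est1}.
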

	\begin{proof}
		By Remark \ref{r1} and the fact that $\nu = 2\theta / (\theta - 2)$, we have
		\begin{align*}
			J_{\lambda,k}(u)-\frac{1}{\theta}J_{\lambda,k}'(u)\cdot u & = \left(\frac{\theta-2}{2\theta}\right)\|u\|^{2}+ \int_{\mathbb{R}^{N}}\frac{1}{\theta}g_{\lambda, k}(x,u)u-G_{\lambda, k}(x,u)\dx \\
			&\geq \left(\frac{\theta-2}{2\theta}\right)\|u\|^{2}+ \int_{B^c_R}\frac{1}{\theta}g_{\lambda,k}(x,u)u - \frac{1}{2\nu}\int_{B^c_R}V(x)u^{2}\dx \geq \left(\frac{\theta-2}{2\theta}\right)\|u\|^{2}. 
		\end{align*}
		Consequently,
		\begin{equation*}
			|J_{\lambda,k}(u)|+\|J_{\lambda,k}'(u)\|\| u \|\geq \left(\frac{\theta-2}{2\theta}\right)\|u\|^{2},
		\end{equation*}
		for all $u \in E$. This inequality ensures that the sequence is bounded.
	\end{proof}
Remark \ref{r1} also implies $J_{\lambda,k}(u)\leq I_0(u),$ $u \in E.$ In particular, we have $c(J_{\lambda, k }) \leq c(I_0)$ and the inequalities of Lemma \ref{l_psbounded} lead to:
	\begin{align}
c(I_0)&\geq c(J_{\lambda, k }) =J_{\lambda,k}( u_n )-\frac{1}{\theta}J_{\lambda,k}'(u_n  ) \cdot u_n + o_n(1) \nonumber \\
&\geq \left(\frac{\theta-2}{2\theta}\right)\| u_n \| ^{2} + o_n(1)\nonumber\\
&\geq  \left(\frac{\theta-2}{2\theta}\right) \mathbb{S}_{K_s} \| u_n  \|_{2^{\ast}_s}^{2} + o_n(1), \label{est1}
	\end{align}
where $\mathbb{S}_{K_s}$ is the Sobolev constant (see \ref{K_tres}) given by
\begin{equation}\label{constS}
	\mathbb{S}_{K_s} = \inf \left\{ \| u \|^2_{D_K^{s,2}(\mathbb{R}^{N})}: u \in D_K^{s,2}(\mathbb{R}^{N})  \text{ and }\| u \|_{2_s^\ast} = 1\right\}.
\end{equation}
We now proceed to prove that $J_{\lambda,k}$ satisfies the Palais-Smale condition at the mountain pass level, or more precisely, that $(u_n)$ has a convergent subsequence. To this end, we establish some technical results necessary to control the nonlocal behavior of the operator $\mathcal{L}_{K_s}$. 

Let us introduce a smooth cutoff function $\eta \in C^\infty(\mathbb{R}^N, [0,1])$ satisfying $\eta=1$ in $B_{2r}^{c}$, $\eta=0$ in $B_{r}$, and $|\nabla \eta| \leq 2/r.$ We also introduce the set $A_r := \{x \in \mathbb{R}^{N}: r \leq  |x| < 2r\},$ for $r > R$. The proof of the subsequent result follows the arguments presented in \cite{duarte1} (see Lemmas 3.1--3.6 therein), and thus we omit the details here.
\begin{lemma}\label{lem1}
		\begin{enumerate}[label=\bf \roman*):]
		\item  $\displaystyle [u_n ,  \eta u_n] _{B_r \times B^c_{2r}}   + [u_n, \eta u_n ]_{B^c_{2r}  \times B_r } \geq-\int_{B_{r}}\int_{B_{2r}^{c}}u_{n}(y)^{2}K_s(x-y)\dxdy.$
		\item For a given $\varepsilon>0,$ there is $r_{0}>0$ such that, if $r>r_{0},$ then
		\begin{equation*}
			\int_{B_{r}}\int_{B_{2r}^{c}}u_{n}(y)^{2}K_s(x-y)\dxdy<\varepsilon.
		\end{equation*}
		\item There are $C'>0$ and $C''>0$ such that
		\begin{align*}
			\int_{A_r}\int_{\mathbb{R}^{N}}|u_{n}(y)|   &|(u_{n}(x)-u_{n}(y))|   |(\eta(x)-\eta(y))|    K_s(x-y)\dxdy \\
			&\leq \frac{C'}{r}\|u_{n}\|_{L^{2}(A_r)}[u_n] + C''\|u_{n}\|_{L^{2}(A_r)}[u_n].
		\end{align*}
		\item For the same constants $C'$ and $C''$ above, we have
		\begin{align*}
			\int_{B_{r}}\int_{A_r}|u_{n}(x)-u_{n}(y)|& |\eta(x)u_{n}(x)-\eta(y)u_{n}(y)|K_s(x-y)\dxdy \\
			&\leq \frac{C'}{r}\|u_{n}\|_{L^{2}(A_r)}[u_{n}] + C''\|u_{n}\|_{L^{2}(A_r)}[u_{n}].
		\end{align*}
		\item Similarly,
		\begin{align*}
	-\int_{B_{2r}^{c}}\int_{A_r}u_{n}(y)&(u_{n}(x)-u_{n}(y))(\eta(x)-\eta(y))K_s(x-y)\dxdy \\
	&\leq \frac{C'}{r}\|u_{n}\|_{L^{2}(A_r)}[u_{n}] + C''\|u_{n}\|_{L^{2}(A_r)}[u_{n}].
		\end{align*}
		
	\end{enumerate}
\end{lemma}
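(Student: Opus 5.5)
The plan is to prove the five items one at a time. Throughout I use three facts: $\eta\in[0,1]$; $|\eta(x)-\eta(y)|\le \min\{(2/r)|x-y|,1\}$; and the uniform bound $\sup_n[u_n]\le \sup_n\|u_n\|<\infty$ from Lemma \ref{l_psbounded}.

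\textbf{Item i).} For $x\in B_r$ we have $\eta(x)=0$, and for $y\in B^c_{2r}$ we have $\eta(y)=1$. Hence the integrand of the first term is
\[
(u_n(x)-u_n(y))(-u_n(y))=-u_n(x)u_n(y)+u_n(y)^2 .
\]
Using $u_n(x)u_n(y)\le \tfrac12 u_n(x)^2+\tfrac12 u_n(y)^2$, this is at least $-\tfrac12 u_n(x)^2+\tfrac12 u_n(y)^2$. The second term is handled symmetrically. I add the two terms and use \ref{K_um} to relabel variables. The positive pieces, coming from $u_n^2$ on $B^c_{2r}$, can be dropped. What remains is a lower bound by minus the integral of $u_n^2$ over the $B_r$ variable. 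The variable labels have to be matched carefully with the statement, but this is bookkeeping.

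\textbf{Item ii).} Fix $y\in B_r$. Then $B^c_{2r}\subset B^c_r(y)$, so by \ref{K_tres} the inner integral is at most $\int_{|z|\ge r}K_s(z)\dz$. Since $K_s\in L^1(B_1^c)$, this tends to $0$ as $r\to\infty$.

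The obstacle is that $\|u_n\|_{L^2(B_r)}$ need not be bounded. I would not use $L^2$ here. Instead, apply Hölder with exponents $2^\ast_s/2$ and $N/(2s)$:
\[
\int_{B_r}u_n^2\,\Big(\int_{B^c_{2r}}K_s\Big)\le \|u_n\|_{2^\ast_s}^2\Big(\int_{B_r}\Big(\int_{|z|\ge r}K_s\Big)^{N/2s}\Big)^{2s/N}\le C\,\|u_n\|_{2^\ast_s}^2\,|B_r|^{2s/N}\int_{|z|\ge r}K_s(z)\dz .
\]
Here $|B_r|^{2s/N}\sim r^{2s}$. Decay of the tail faster than $r^{-2s}$ follows from the upper bound \ref{K_quatro}, but that hypothesis is reserved for later sections. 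Without it, I would argue as follows. For $y\in B_r$ and $x\in B^c_{2r}$ we have $|x-y|\ge |x|/2$, so it suffices to control $\int_{B^c_{2r}}K_s(x-y)\dx$ through the Gagliardo energy of $u_n$ combined with Sobolev. This is the delicate step, and I would follow \cite{duarte1} for it, using the $2^\ast_s$ bound and the uniform boundedness of $(u_n)$ in $E$.

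\textbf{Items iii)--v).} These follow one scheme. Apply Cauchy--Schwarz to split off a factor $[u_n]$, which leaves
\[
\Big(\int_{A_r}\int_{\mathbb{R}^N}u_n(y)^2|\eta(x)-\eta(y)|^2K_s(x-y)\Big)^{1/2},
\]
or the analogous expression with the roles of $x$ and $y$ reversed, using \ref{K_um}. Now split the inner integral by $|x-y|<1$ and $|x-y|\ge1$. On $|x-y|<1$, bound $|\eta(x)-\eta(y)|^2\le (4/r^2)|x-y|^2$. On $|x-y|\ge 1$, bound it by $1$. In both cases \ref{K_tres} gives $\int \gamma K_s<\infty$, so this factor is at most $C(1/r+1)\|u_n\|_{L^2(A_r)}$. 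This produces the constants $C'$ and $C''$.

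For iv), write $\eta(x)u_n(x)-\eta(y)u_n(y)=\eta(x)(u_n(x)-u_n(y))+u_n(y)(\eta(x)-\eta(y))$. On $B_r$, $\eta$ vanishes, which leaves only the term of type iii). For v), the same Cauchy--Schwarz argument applies directly.
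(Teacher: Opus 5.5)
Item ii) is not proved, and it is the substantive part of the lemma. Your concrete route (H\"older with exponents $2^\ast_s/2$ and $N/(2s)$) cannot work. \ref{K_quatro} gives $\int_{|z|\ge r}K_s\,dz\le Cr^{-2s}$, not faster decay, and \ref{K_dois} forces $\int_{|z|\ge r}K_s\,dz\ge c\,r^{-2s}$. So $r^{2s}\int_{|z|\ge r}K_s\,dz$ stays bounded away from zero, and your bound is merely $C\|u_n\|_{2^\ast_s}^2$. In fact no bound in terms of $\sup_n[u_n]$ and $\sup_n\|u_n\|_{2^\ast_s}$ alone can produce smallness. For $K_s=|z|^{-(N+2s)}$, the dilates $r^{-(N-2s)/2}\phi(\cdot/r)$ have $r$-independent $D^{s,2}$ and $L^{2^\ast_s}$ norms, yet their double integral is $\ge c\int_{B_1}\phi^2\,dx$ for every $r$. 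Your fallback (controlling $\int_{B^c_{2r}}K_s(x-y)\,dx$ ``through the Gagliardo energy of $u_n$'') is not an argument, since that integral does not involve $u_n$.

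The missing idea is to pass to the weak limit. Part ii) is used only in Proposition \ref{p_conv}, next to $o_n(1)$ terms, so it suffices that $\limsup_{n\to\infty}$ of the double integral be small for large $r$. For fixed $r$, $\kappa_r(y):=\int_{B^c_{2r}}K_s(x-y)\,dx\le\int_{|z|\ge r}K_s\,dz$ is bounded on $B_r$, and $u_n\to u_{\lambda,k}$ in $L^2(B_r)$. Hence the double integral converges to $\int_{B_r}u_{\lambda,k}^2\kappa_r\,dy$, a quantity attached to a single fixed function. Under \ref{K_quatro}, for instance, $\kappa_r\le Cr^{-2s}$ on $B_r$ (because $|x-y|\ge|x|/2$). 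Then $r^{-2s}\int_{B_r}u_{\lambda,k}^2\,dy\le r^{-2s}\int_{B_\rho}u_{\lambda,k}^2\,dy+C\|u_{\lambda,k}\|^2_{L^{2^\ast_s}(B_\rho^c)}$, which is small after choosing first $\rho$ and then $r$.

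Items i) and iii) are fine, but v) does not follow ``directly''. Your proof of iii) relies on the convention that in $\int_A\int_B\cdots\,dx\,dy$, $y$ ranges over $A$ and $x$ over $B$. With that convention, the undifferenced factor $u_n(y)$ in v) sits at $y\in B^c_{2r}$, and relabelling via \ref{K_um} does not move it. Cauchy--Schwarz then leaves $\int_{B^c_{2r}}u_n(y)^2\int_{A_r}(1-\eta(x))^2K_s(x-y)\,dx\,dy$, which $\|u_n\|_{L^2(A_r)}$ does not control. The fix, and the reason v) is one-sided, is to write $u_n(y)=u_n(x)-(u_n(x)-u_n(y))$. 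The integrand becomes $-u_n(x)(u_n(x)-u_n(y))(\eta(x)-\eta(y))+(u_n(x)-u_n(y))^2(\eta(x)-1)$. The last term is $\le0$, and the first is handled exactly as in iii).

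Similarly, in iv) (where $y\in B_r$, $x\in A_r$) your splitting kills neither term; use $\eta(x)u_n(x)-\eta(y)u_n(y)=u_n(x)(\eta(x)-\eta(y))$ instead. Which point carries the undifferenced $u_n$ is precisely what these estimates hinge on, so this is not mere bookkeeping.
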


	\begin{lemma}\label{prop1}
		There is $C>0$, such that
		\begin{equation*}
			[u_{n},\eta u_{n}] \geq -\int_{B_{r}}\int_{B_{2r}^{c}}u_{n}(y)^{2}K_s(x-y)\dxdy -C((1/r)+1)\|u_{n}\|_{L^{2}(A_r)}.
		\end{equation*}
	\end{lemma}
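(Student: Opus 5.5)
Since $\eta=0$ in $B_r$, $\eta=1$ in $B_{2r}^c$, and $\mathbb{R}^N=B_r\cup A_r\cup B_{2r}^c$, we split the double integral defining $[u_n,\eta u_n]$ into the nine blocks $[u_n,\eta u_n]_{X\times Y}$ with $X,Y\in\{B_r,A_r,B_{2r}^c\}$. Each block is then treated with the matching item of Lemma \ref{lem1}. After that, the uniform bound $[u_n]\leq \|u_n\|\leq C$ from Lemma \ref{l_psbounded} removes the factor $[u_n]$.

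The blocks are handled as follows.
\begin{enumerate}[label=\bf \roman*):]
\item On $B_r\times B_r$ the integrand vanishes because $\eta\equiv 0$ there.
\item On $B_{2r}^c\times B_{2r}^c$ we have $\eta u_n = u_n$, so the block equals $\int\int (u_n(x)-u_n(y))^2K_s\geq 0$.
\item The two mixed blocks $B_r\times B_{2r}^c$ and $B_{2r}^c\times B_r$ are bounded below by Lemma \ref{lem1} i), which gives exactly the first term $-\int_{B_r}\int_{B_{2r}^c}u_n(y)^2K_s$.
\item The remaining five blocks all involve $A_r$; each is bounded below by minus its absolute value, or by minus its positive part.
\end{enumerate}
The two blocks between $B_r$ and $A_r$ are covered by Lemma \ref{lem1} iv); by the symmetry \ref{K_um} they are equal after swapping variables.

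For the blocks involving $A_r$ and $B_{2r}^c$, and for the block $A_r\times A_r$, we expand
\begin{equation*}
\eta(x)u_n(x)-\eta(y)u_n(y)=\eta(x)(u_n(x)-u_n(y))+u_n(y)(\eta(x)-\eta(y)).
\end{equation*}
The first piece gives $\int\int \eta(x)(u_n(x)-u_n(y))^2K_s\geq 0$, so it can be discarded. The second piece is either controlled by Lemma \ref{lem1} v) (for the $B_{2r}^c\times A_r$ orientation) or bounded in absolute value by Lemma \ref{lem1} iii), which integrates over $A_r\times\mathbb{R}^N$ and hence also covers $A_r\times A_r$ and $A_r\times B_{2r}^c$.

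Summing, we obtain
\begin{equation*}
[u_n,\eta u_n]\geq -\int_{B_r}\int_{B_{2r}^c}u_n(y)^2K_s\dxdy - m\Big(\frac{C'}{r}+C''\Big)\|u_n\|_{L^2(A_r)}[u_n],
\end{equation*}
where $m$ is a fixed number of applications of Lemma \ref{lem1}. Using $[u_n]\leq \sup_n\|u_n\|<\infty$ then gives the claim with $C=m\max\{C',C''\}\sup_n\|u_n\|$.

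The main obstacle is the bookkeeping. Each block has to be expanded in the orientation that matches the literal form of Lemma \ref{lem1}, variables have to be swapped using \ref{K_um} where needed, and the sign must be correct so that the only terms discarded are the nonnegative ones $\eta(x)(u_n(x)-u_n(y))^2K_s$. In particular, the block $A_r\times A_r$ is the one where the splitting should be chosen with the factor $\eta(x)$ evaluated at the variable in the region where we drop the square, so that the cross term lands precisely inside the integral of Lemma \ref{lem1} iii).
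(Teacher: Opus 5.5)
Your proposal is correct and follows essentially the same route as the paper. You split $[u_n,\eta u_n]$ over $B_r$, $A_r$, $B_{2r}^c$, discard the nonnegative $B_{2r}^c\times B_{2r}^c$ block, and use Lemma \ref{lem1} i) for the $B_r$--$B_{2r}^c$ blocks. You then expand the $A_r$ blocks via $\eta(x)u_n(x)-\eta(y)u_n(y)=\eta(x)(u_n(x)-u_n(y))+u_n(y)(\eta(x)-\eta(y))$, drop the nonnegative squares, control the cross terms with items iii)--v), and absorb $[u_n]$ using the boundedness of $(u_n)$. The only cosmetic difference is that you bound the $A_r\times B_r$ block by item iv) via the symmetry \ref{K_um}, whereas the paper keeps it inside $A_r\times\mathbb{R}^N$ and treats it with item iii).
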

	\begin{proof} Since $\eta=0$ in $B_{r}$, we have 
		\begin{align*}
			[u_{n},\eta u_{n}]=	&[u_{n},\eta u_{n}]_{B_{r}\times A_r} +[u_{n},\eta u_{n}]_{B_{r}\times B_{2r}^{c}}+ [u_{n},\eta u_{n}]_{A_r \times \mathbb{R}^{N}}\\
			&+[u_{n},\eta u_{n}]_{B_{2r}^{c}\times B_{r}}+[u_{n},\eta u_{n}]_{B_{2r}^{c}\times A_r} + [u_{n},\eta u_{n}]_{B_{2r}^{c}\times B_{2r}^{c}} .
		\end{align*}
		Since $\eta = 1$ on $B_{2r}^{c}$, it follows that $[u_{n},\eta u_{n}]_{B_{2r}^{c}\times B_{2r}^{c}} \geq 0$. Moreover, invoking Lemma \ref{lem1}--i), we obtain
		\begin{align*}
			[u_{n},\eta u_{n}]_{A_r \times \mathbb{R}^{N}}&+[u_{n},\eta u_{n}]_{B_{2r}^{c}\times A_r}\\
			&\leq [u_{n},\eta u_{n}]+ \int_{B_{r}}\int_{B_{2r}^{c}}u_{n}(y)^{2}K_s(x-y)\dxdy-[u_{n},\eta u_{n}]_{B_{r}\times A_r}.
		\end{align*}
		On the other hand, for $C$ and $D$ subsets of $\mathbb{R}^{N}$, the following identity holds,
		\begin{align*}
				[u, \eta u]_{C \times D} &=\int_{C}\int_{D}(u(x)-u(y))(\eta(x) u(x)-\eta (y) u(y))K_s(x-y)\dxdy \\
			&=\int_{C} \int_{D}\eta(x)(u(x)-u(y))^{2}K_s(x-y)\dxdy\\
			&\qquad  + \int_{C}\int_{D}u(y)(u(x)-u(y))(\eta(x)-\eta(y))K_s(x-y)\dxdy,
		\end{align*}
for any $u \in E.$ Consequently,
\begin{align*}
	\int_{A_r} &\int_{\mathbb{R}^{N}}\eta(x)(u_{n}(x)-u_{n}(y))^{2}K_s(x-y)\dxdy\\
	+&\int_{B_{2r}^{c}}\int_{A_r}\eta(x)(u_{n}(x)-u_{n}(y))^{2}K_s(x-y)\dxdy \\
	 &\leq [u_{n},\eta u_{n}] + \int_{B_{r}}\int_{B_{2r}^{c}}u_{n}(y)^{2}K_s(x-y)\dxdy-[u_{n},\eta u_{n}]_{B_{r}\times A_r} \\
	&\qquad -\int_{A_r}\int_{\mathbb{R}^{N}}u_{n}(y)(u_{n}(x)-u_{n}(y))(\eta(x)-\eta(y))K_s(x-y)\dxdy \\
	&\qquad   -\int_{B_{2r}^{c}}\int_{A_r}u_n(y)(u_{n}(x)-u_{n}(y))(\eta(x)-\eta(y))K_s(x-y)\dxdy.
\end{align*}
Now we apply items iii), iv) and v) of Lemma \ref{lem1}, to obtain some constants $C,$ $C_{1},$ $C_{2}>0$ such that
\begin{align*}
	&0\leq \int_{A_r}\int_{\mathbb{R}^{N}}\eta(x)(u_{n}(x)-u_{n}(y))^{2}K_s(x-y)\dxdy\\
	&\qquad+\int_{B_{2r}^{c}}\int_{A_r}\eta(x)(u_{n}(x)-u_{n}(y))^{2}K_s(x-y)\dxdy  \\
	& \leq [u_{n},\eta u_{n}]+ \int_{B_{r}}\int_{B_{2r}^{c}}u_{n}(y)^{2}K_s(x-y)\dxdy \\
	&\qquad+\frac{C_{1}}{r}\|u_{n}\|_{L^{2}(A_r)}[u_{n}] + C_{2}\|u_{n}\|_{L^{2}(A_r)}[u_{n}]\\
	&\leq [u_{n},\eta u_{n}]+ \int_{B_{r}}\int_{B_{2r}^{c}}u_{n}(y)^{2}K_s(x-y)\dxdy +C((1/r)+1)\|u_{n}\|_{L^{2}(A_r)}.
\end{align*}
This proves the result.
\end{proof}
With the preceding technical results established, we are now ready to prove that $J_{\lambda,k}$ satisfies the Palais-Smale condition at the level $c(J_{\lambda,k})$. Since the sequence $(u_n)$ is bounded in $E$ by Lemma \ref{l_psbounded}, passing to a subsequence if necessary, we may assume that there exists $u_{\lambda,k} \in E$ such that $u_n \rightharpoonup u_{\lambda,k}$ weakly in $E$ and $u_n(x) \to u_{\lambda,k}(x)$ a.e. in $\mathbb{R}^N$. As a first step, we prove that this weak limit is a weak solution of \eqref{Plambda}.
\begin{lemma}
	$J'_{\lambda, k } (u_{\lambda, k })\cdot v = 0,$ for any $v \in E.$
\end{lemma}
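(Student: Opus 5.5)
The plan is to pass to the limit in $J'_{\lambda,k}(u_n)\cdot v \to 0$, first for test functions $v \in C_0^\infty(\mathbb{R}^N)$ and then for all of $E$ by density.

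\textbf{Step 1: the linear term.} Weak convergence $u_n \rightharpoonup u_{\lambda,k}$ in $E$ gives $(u_n,v) \to (u_{\lambda,k},v)$ for every $v \in E$, since $(\cdot,v)$ is a continuous linear functional on the Hilbert space $E$.

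\textbf{Step 2: the nonlinear term for compactly supported $v$.} Fix $v \in C_0^\infty(\mathbb{R}^N)$ and let $\Omega$ be a bounded set containing $\supp v$. We want
\begin{equation*}
\int_{\mathbb{R}^N} g_{\lambda,k}(x,u_n)v\dx \to \int_{\mathbb{R}^N} g_{\lambda,k}(x,u_{\lambda,k})v\dx .
\end{equation*}
\begin{enumerate}[label=\bf \alph*):]
\item By Remark \ref{r1}, $0 \le g_{\lambda,k}(x,t) \le f_{\lambda,k}(t) \le C_1(1+\lambda k^{q-p})|t|^{p-1}$, so the growth is subcritical with $p<2^\ast_s$. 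It is harmless to also use $|t|+|t|^{p-1}$ as a majorant.
\item $(u_n)$ is bounded in $D^{s,2}(\mathbb{R}^N)$ by \ref{K_dois}, hence bounded in $H^s(\Omega)$, since $L^{2^\ast_s}(\Omega)\subset L^2(\Omega)$ on a bounded set.
\item The compact embedding $H^s(\Omega)\hookrightarrow L^{r}(\Omega)$ for $r<2^\ast_s$ then gives, along a subsequence, $u_n \to u_{\lambda,k}$ in $L^{p}(\Omega)$.
\item Since $g_{\lambda,k}$ is Carathéodory, $g_{\lambda,k}(x,u_n)\to g_{\lambda,k}(x,u_{\lambda,k})$ a.e.
\item The sequence $g_{\lambda,k}(x,u_n)v$ is dominated by $C|u_n|^{p-1}|v|$. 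Its convergence follows either from Vitali's theorem or from the generalized dominated convergence theorem, using $|u_n|^{p-1}\to|u_{\lambda,k}|^{p-1}$ in $L^{p/(p-1)}(\Omega)$.
\end{enumerate}
Alternatively, one can avoid the compact embedding. Here $|u_n|^{p-1}$ is bounded in $L^{2^\ast_s/(p-1)}(\Omega)$ with $2^\ast_s/(p-1)>1$, so the family $g_{\lambda,k}(x,u_n)v$ is uniformly integrable on $\Omega$, and Vitali's theorem applies with only a.e. convergence. I would use this route, since it needs only the a.e. convergence already assumed. Because the limit is independent of the subsequence, the whole sequence converges. Combined with $J'_{\lambda,k}(u_n)\cdot v \to 0$, this yields $J'_{\lambda,k}(u_{\lambda,k})\cdot v = 0$ for every $v\in C_0^\infty(\mathbb{R}^N)$.

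\textbf{Step 3: density.} To extend to all $v\in E$, I need $C_0^\infty(\mathbb{R}^N)$ to be dense in $E$ and $v\mapsto J'_{\lambda,k}(u_{\lambda,k})\cdot v$ to be continuous on $E$. The latter holds because $J_{\lambda,k}\in C^1(E,\mathbb{R})$.

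Density is the step I expect to be the main obstacle, since $E$ carries the weighted term $\int V u^2$ and $V$ is only in $L^\infty_{\loca}$. The plan is the standard two-stage approximation:
\begin{enumerate}[label=\bf \roman*):]
\item Truncate in space by $\psi_R u$, with $\psi_R$ a smooth cutoff. Convergence in $\int V u^2$ follows by dominated convergence. Convergence of the Gagliardo seminorm follows from the cutoff estimates of the same type as Lemma \ref{lem1}, items iii)–v), with $\eta$ replaced by $1-\psi_R$.
\item Mollify the compactly supported function. On its compact support $V$ is bounded, so $L^2$ convergence controls the weighted term, and mollification converges in $D^{s,2}_{K_s}(\mathbb{R}^N)$ by translation continuity in $L^2(\mathbb{R}^{2N})$.
\end{enumerate}

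If density turns out to be delicate, there is a fallback that avoids it by working directly with $v \in E$. On $B_\rho$, the term $\int_{B_\rho} g_{\lambda,k}(x,u_n)v$ converges by the argument of Step 2 with $v\in L^{2^\ast_s}$. On $B_\rho^c$, where $g_{\lambda,k}\le V(x)t/\nu$ by Remark \ref{r1}, Cauchy–Schwarz in the weighted norm gives
\begin{equation*}
\int_{B_\rho^c} g_{\lambda,k}(x,u_n)|v| \le \nu^{-1}\|u_n\|\left(\int_{B_\rho^c}V v^2\right)^{1/2},
\end{equation*}
which is uniformly small in $n$ for $\rho$ large.
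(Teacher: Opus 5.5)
Your fallback paragraph is exactly the paper's proof, and it is complete on its own. For arbitrary $v\in E$ and $\rho>R$, Remark~\ref{r1}~iv) and Cauchy--Schwarz make $\int_{B_\rho^c}|g_{\lambda,k}(x,u_n)v|\dx\le \nu^{-1}\sup_n\|u_n\|\,\bigl(\int_{B_\rho^c}Vv^2\dx\bigr)^{1/2}$ small uniformly in $n$, and the same holds for $u_{\lambda,k}$. On $B_\rho$, the subcritical bound gives convergence. The paper just cites Lebesgue's theorem for this local piece. Your Vitali argument, with $|u_n|^{p-1}|v|$ bounded in $L^{2^\ast_s/p}(B_\rho)$ and $2^\ast_s/p>1$, is a clean way to do it using only a.e.\ convergence. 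Together with Step~1 this finishes the proof, so no density is needed.

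I would make this the main argument and drop the density route, because its truncation step is not justified as you wrote it. The bounds of Lemma~\ref{lem1}~iii)--v) have the form $(C'/r+C'')\|u\|_{L^2(A_r)}[u]$, and the part $C''\|u\|_{L^2(A_r)}[u]$ carries no decaying constant. For $u\in E$ you only know $u\in L^{2^\ast_s}(\mathbb{R}^N)$. H\"older then gives $\|u\|_{L^2(A_r)}\le Cr^s\|u\|_{L^{2^\ast_s}(A_r)}$, which has the growing factor $r^s$ and need not tend to zero. So these estimates do not show $[(1-\psi_R)u]\to 0$.

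The real difficulty is the tail term $\int u(y)^2\bigl(\int(\psi_R(x)-\psi_R(y))^2K_s(x-y)\dx\bigr)\dy$. Even for the pure fractional kernel, the paper needs the fractional Hardy inequality to control it (Lemma~\ref{l_cut}). A general $K_s$ satisfying only \ref{K_um}--\ref{K_tres} may have a tail heavier than $|z|^{-(N+2s)}$. In that case you would have to import the truncation argument behind the cited density of $C_0^\infty(\mathbb{R}^N)$ in $D^{s,2}_{K_s}(\mathbb{R}^N)$. You would also have to check that it is a spatial cutoff, so that the weighted term $\int V u^2$ is handled by dominated convergence. Density of $C_0^\infty(\mathbb{R}^N)$ in $E$ is not proved in the paper, and testing directly with $v\in E$, as in your fallback, avoids it.
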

\begin{proof}
	By Remark \ref{r1}, for a given $r>R,$ it is clear that $|g_{\lambda,k}(x,u_n)v| \leq (1/\nu) V(x)|u_n| |v|$ in $B^c_r$, with the same inequality holding for $u_{\lambda,k}$ instead of $u_n$. Applying the Cauchy-Schwarz inequality, and using the fact that $(u_n)$ is bounded in $E$, we obtain
	\begin{equation*}
		\int_{B_{r}^c} |g_{\lambda,k}(x,u_n)v| \dx \leq \frac{1}{\nu} \left( \int_{B_r^c} V(x)u_n^2 \dx \right)^{\frac{1}{2}} \left( \int_{B_r^c} V(x)v^2 \dx \right)^{\frac{1}{2}} \leq C \left( \int_{B_r^c} V(x)v^2 \dx \right)^{\frac{1}{2}},\quad \forall \, n \in \mathbb{N}.
	\end{equation*}
	Since $v \in E,$ the right-hand side vanishes as $r \to \infty$. Thus, for a given $\varepsilon>0,$ there exists $r_\varepsilon >R$ such that
	\begin{equation}\label{ha1}
		\left| \int _{B_{r_\varepsilon}^c} g_{\lambda, k }(x,u_n)v \dx  \right| <\frac{\varepsilon}{4} \quad\text{and}\quad  \left| \int _{B_{r_\varepsilon}^c} g_{\lambda, k }(x,u_{\lambda,k})v \dx  \right| <\frac{\varepsilon}{4},\quad \forall \, n \in \mathbb{N}.
	\end{equation}
	Next, using Remark \ref{r1} again, we know that $g_{\lambda, k }(x,t) \leq C_{\lambda, k } t^{p-1}.$ This allows us to apply a standard argument involving the Lebesgue Convergence Theorem in $B_{r_\varepsilon}$ to obtain the existence of $n_0=n_0(r_\varepsilon ,\varepsilon)$ such that
	\begin{equation}\label{ha2}
		\left| \int _{B_{r_\varepsilon}} g_{\lambda, k }(x,u_n)v \dx - \int _{B_{r_\varepsilon}} g_{\lambda, k }(x,u_{\lambda,k})v \dx \right| < \frac{\varepsilon}{2},\quad \text{whenever }n \geq n_0.
	\end{equation}
	Combining \eqref{ha1} and \eqref{ha2} yields
	\begin{equation*}
		\lim _{n \rightarrow \infty } \int _{\mathbb{R}^N }	g_{\lambda, k }(x,u_n)v \dx  = \int _{\mathbb{R}^N } g_{\lambda, k }(x,u_{\lambda,k})v  \dx .
	\end{equation*}
	Therefore, the conclusion follows by combining this limit with the weak convergence $u_n \rightharpoonup u_{\lambda,k}$ in $E$ and the fact that $\| J'_{\lambda, k } (u_n)\|_{E^\ast} \rightarrow 0$.
\end{proof}

\begin{proposition}\label{p_conv}
	There exists $u_{\lambda,k} \in E$ such that, up to a subsequence, $u_n \rightarrow u_{\lambda,k}$ in $E$.
\end{proposition}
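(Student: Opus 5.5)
The strategy is the del Pino--Felmer / Alves--Souto scheme: show that the Palais--Smale sequence has no mass at infinity, deduce strong convergence on balls from the local compact embedding, and conclude by a norm comparison using that $u_{\lambda,k}$ is a critical point (previous Lemma).

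\emph{Step 1: uniform smallness outside large balls.} Fix $\varepsilon>0$. I test $J'_{\lambda,k}(u_n)$ with $\eta u_n$, where $\eta$ is the cutoff from Lemma \ref{lem1} with $r>R$. Since $(u_n)$ is bounded (Lemma \ref{l_psbounded}) and $\eta u_n$ is bounded in $E$, we get $J'_{\lambda,k}(u_n)\cdot(\eta u_n)=o_n(1)$, that is,
\begin{equation*}
[u_n,\eta u_n]+\int_{\mathbb{R}^N}V\eta u_n^2\dx=\int_{\mathbb{R}^N}g_{\lambda,k}(x,u_n)\eta u_n\dx+o_n(1).
\end{equation*}
On $\operatorname{supp}\eta\subset B_R^c$, Remark \ref{r1}--iv) gives $g_{\lambda,k}(x,u_n)u_n\le \nu^{-1}V u_n^2$. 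Hence
\begin{equation*}
(1-\nu^{-1})\int_{\mathbb{R}^N}V\eta u_n^2\dx\le -[u_n,\eta u_n]+o_n(1).
\end{equation*}
Lemma \ref{prop1} bounds $-[u_n,\eta u_n]$ by $\int_{B_r}\int_{B_{2r}^c}u_n(y)^2K_s\dxdy+C(1/r+1)\|u_n\|_{L^2(A_r)}$. Lemma \ref{lem1}--ii) makes the first term less than $\varepsilon$ for $r$ large, uniformly in $n$.

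The second term is the main obstacle, since $\|u_n\|_{L^2(A_r)}$ carries no small factor $1/r$. I would handle it with ($V_2$): on $A_r$ we have $V(x)\ge \Lambda R^{4s}|x|^{-4s}\ge c\,r^{-4s}$. Combined with Hölder and the $L^{2^*_s}$ bound, this gives $\|u_n\|_{L^2(A_r)}\le C r^{s}\|u_n\|_{L^{2^*_s}(A_r)}$, which is not small. So instead I pass to the limit first. By Rellich on the bounded annulus $A_r$ (the local compact embedding of $D^{s,2}$ into $L^2_{\loca}$, available thanks to ($K_2$)), $\|u_n\|_{L^2(A_r)}\to\|u_{\lambda,k}\|_{L^2(A_r)}$ as $n\to\infty$. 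Since $u_{\lambda,k}\in L^{2^*_s}$, for a sequence of dyadic radii we have $\|u_{\lambda,k}\|_{L^{2^*_s}(A_r)}\to0$. Choosing $r$ along such radii so that the resulting bound is small (and, if needed, averaging $\eta$ over several dyadic annuli to absorb the $r^s$ factor using $\int V u_{\lambda,k}^2<\infty$), we obtain
\begin{equation*}
\limsup_n\int_{B_{2r}^c}Vu_n^2\dx\le C\varepsilon.
\end{equation*}
The same bound then holds for $\int_{B_{2r}^c}g_{\lambda,k}(x,u_n)u_n\dx$.

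\emph{Step 2: convergence of the nonlinear terms.} On $B_{2r}$, Remark \ref{r1}--i) gives $g_{\lambda,k}(x,t)t\le C_{\lambda,k}t^{p}$ with $p<2^*_s$. Local compactness and dominated convergence then give $\int_{B_{2r}}g_{\lambda,k}(x,u_n)u_n\to\int_{B_{2r}}g_{\lambda,k}(x,u_{\lambda,k})u_{\lambda,k}$. Together with Step 1, this yields
\begin{equation*}
\int_{\mathbb{R}^N}g_{\lambda,k}(x,u_n)u_n\dx\to\int_{\mathbb{R}^N}g_{\lambda,k}(x,u_{\lambda,k})u_{\lambda,k}\dx.
\end{equation*}

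\emph{Step 3: conclusion.} From $J'_{\lambda,k}(u_n)\cdot u_n=o_n(1)$ and $J'_{\lambda,k}(u_{\lambda,k})\cdot u_{\lambda,k}=0$ we get
\begin{equation*}
\|u_n\|^2=\int_{\mathbb{R}^N}g_{\lambda,k}(x,u_n)u_n\dx+o_n(1)\to\int_{\mathbb{R}^N}g_{\lambda,k}(x,u_{\lambda,k})u_{\lambda,k}\dx=\|u_{\lambda,k}\|^2.
\end{equation*}
Norm convergence together with weak convergence in the Hilbert space $E$ gives $u_n\to u_{\lambda,k}$ in $E$.
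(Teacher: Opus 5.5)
Your overall route is the paper's: test $J'_{\lambda,k}(u_n)$ with $\eta u_n$, invoke Lemma~\ref{prop1}, use compactness on $A_r$ and $B_{2r}$, and conclude by norm convergence. Steps 2--3 are fine. Step 1, however, has a genuine gap at the term $C(1/r+1)\|u_n\|_{L^2(A_r)}$.

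After letting $n\to\infty$ you need $\|u_{\lambda,k}\|_{L^2(A_r)}$ to be small for some large $r$, and none of your devices provides this.
\begin{itemize}
\item \emph{Dyadic radii.} Since $\|u_{\lambda,k}\|_{L^{2^*_s}(A_r)}\to 0$ along every $r\to\infty$, selecting radii buys nothing. The obstruction is the factor $r^s$, and it is real. Take $K_s(z)=|z|^{-(N+2s)}$ and $V(x)=\Lambda R^{4s}|x|^{-4s}$ for $|x|\ge R$. A smooth radial function equal to $|x|^{-(N-2s)/2}(\log|x|)^{-1}$ for large $|x|$ lies in $E$ (it is borderline for the fractional Hardy inequality), yet $\|u\|_{L^2(A_r)}\approx r^s/\log r\to\infty$. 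Your argument uses nothing about $u_{\lambda,k}$ beyond $u_{\lambda,k}\in E$.
\item \emph{Using the potential.} $\int Vu_{\lambda,k}^2<\infty$ together with \ref{V_dois} only gives $\|u\|_{L^2(A_r)}^2\le Cr^{4s}\int_{A_r}Vu^2$, which is a worse power.
\item \emph{Averaging cutoffs.} Since $[u_n,\eta u_n]$ is linear in $\eta$, averaging cutoffs $\eta_{2^jr}$ only averages the bounds of Lemma~\ref{prop1}. Their coefficient $C(1/r+1)$ does not decay, so you obtain an average of the quantities $\|u_n\|_{L^2(A_{2^jr})}$, all of which may be large.
\end{itemize}
The paper's own proof disposes of this term by asserting $\|u_{\lambda,k}\|_{L^2(A_r)}\le C_N r^s\|u_{\lambda,k}\|_{L^{2^*_s}(A_r)}\to 0$. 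That is exactly the claim you rightly declined to make, so following the paper does not close the gap.

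A repair is to bypass Lemma~\ref{prop1}. From the identity
$(u(x)-u(y))(\eta(x)u(x)-\eta(y)u(y))=\eta(x)(u(x)-u(y))^2+u(y)(u(x)-u(y))(\eta(x)-\eta(y))$
and Cauchy--Schwarz, you get $[u_n,\eta u_n]\ge -[u_n]\,\mathcal{T}_{n,r}^{1/2}$, where $\mathcal{T}_{n,r}:=\iint u_n(y)^2(\eta(x)-\eta(y))^2K_s(x-y)\dxdy$. It then suffices to prove $\lim_{r\to\infty}\limsup_{n\to\infty}\mathcal{T}_{n,r}=0$. Fix $L\ge 4$ and split in $y$.
\begin{itemize}
\item \emph{Region $|y|<Lr$.} The inner $x$-integral is at most $Cr^{-2s}$ (split at $|x-y|=r$ and use $|\nabla\eta|\le 2/r$). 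By compactness on $B_{Lr}$, this part tends as $n\to\infty$ to at most $Cr^{-2s}\int_{B_{Lr}}u_{\lambda,k}^2$. For any $\rho$, this is bounded by $Cr^{-2s}\rho^{2s}\|u_{\lambda,k}\|_{2^*_s}^2+CL^{2s}\|u_{\lambda,k}\|^2_{L^{2^*_s}(B_\rho^c)}$.
\item \emph{Region $|y|\ge Lr$.} Only $x\in B_{2r}$ contributes, and there $|x-y|\ge|y|/2$. So the inner integral is at most $Cr^N|y|^{-N-2s}$, and H\"older gives a contribution at most $CL^{-N}\sup_n\|u_n\|_{2^*_s}^2$, uniformly in $n$.
\end{itemize}
Choosing $L$, then $\rho$, then $r$ yields the claim, and the rest of your argument goes through. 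Note that both inner bounds use a tail estimate $K_s(z)\le\mathcal{C}_2|z|^{-(N+2s)}$ for large $|z|$, i.e.\ \ref{K_quatro}, which Section~\ref{s_aux} does not assume.
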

\begin{proof}
To establish the strong convergence $u_n \rightarrow u_{\lambda,k}$ in $E$, since $E$ is a Hilbert space, it suffices to show that $\| u_n \| \rightarrow \| u_{\lambda,k} \|$. Given that $J'_{\lambda,k}(u_n) u_n = o_n(1)$, this is equivalent to proving that
	\begin{equation}\label{conv_fim}
		\lim\limits_{n \rightarrow \infty}\int_{\mathbb{R}^{N}}g_{\lambda,k}(x,u_{n})u_{n} \dx = \int_{\mathbb{R}^{N}}g_{\lambda,k}(x,u_{\lambda,k})u_{\lambda,k} \dx.
	\end{equation}
	To verify \eqref{conv_fim}, we estimate the behavior of the sequence outside a large ball. We observe that $\eta u_{n} \in E$ and satisfies $\|\eta u_{n}\| \leq C \|u_{n}\|$ (see Lemma 2.5 in \cite{duarte2}). Thus, $(\eta u_{n})$ is bounded in $E$, which implies $J_{\lambda,k}'(u_{n})\cdot (\eta u_{n})=o_{n}(1)$. Explicitly,
	\begin{equation}\label{eq_eta}
		[u_{n},\eta u_{n}]+\int_{\mathbb{R}^{N}}V(x)u_{n}^{2}\eta \dx = \int_{\mathbb{R}^{N}}g_{\lambda,k}(x,u_{n})(\eta u_{n})\dx + o_{n}(1).
	\end{equation}
	 Using the fact that $\eta = 1$ in $B_{2r}^c$ and $\eta = 0$ in $B_r$, we have $\int_{B_r^c} V(x) u_n^2 \eta \dx \geq \int_{B_{2r}^c} V(x) u_n^2 \dx .$ Simultaneously,  Remark \ref{r1} yields $g_{\lambda,k}(x,u_{n})u_{n} \leq (V(x) / \nu ) u_{n}^{2},$ for $|x|>r.$ Inserting these bounds and the lower estimate for $[u_n, \eta u_n]$ from Lemma \ref{prop1} into \eqref{eq_eta}, we deduce
	\begin{align*}
		\left(1-\frac{1}{\nu }\right)\int_{B^c_{2r}}V(x)u_{n}^{2} \dx & \leq \int_{B_{r}}\int_{B_{2r}^{c}}u_{n}(y)^{2}K_s(x-y)\dx\dy \nonumber \\
		&\quad + C_0\left( \frac{1}{r}+1 \right)\|u_{n}\|_{L^{2}(A_r)} +o_{n}(1),
	\end{align*}
	for some constant $C_0>0$ independent of $n$ and $r$. Let $\varepsilon>0$ be arbitrary. By Lemma \ref{lem1} \textit{ii)}, we can choose $r>R$ large enough such that the double integral is less than $(\nu -1)(\varepsilon /4)$. This yields
	\begin{equation}\label{eq:5}
		\int_{B^c_{2r}}g_{\lambda,k}(x,u_{n})u_{n}\dx \leq \frac{1}{\nu}\int_{B^c_{2r}}V(x)u_{n}^{2}\dx \leq \frac{\varepsilon}{4} + C'_0\|u_{n}\|_{L^{2}(A_r)} +o_{n}(1). 
	\end{equation}
	Furthermore, since $u_{\lambda,k} \in L^{2^\ast_s}(\mathbb{R}^N)$, Hölder's inequality over bounded domains implies that $\|u_{\lambda,k}\|_{L^2(A_r)} \leq C_N r^s \|u_{\lambda,k}\|_{L^{2^\ast_s}(A_r)} \to 0$ as $r \to \infty$. Thus, by enlarging $r$ if necessary, we can ensure that
	\begin{equation}\label{sob1}
		\|u_{\lambda,k}\|_{L^{2}(A_r)}< \frac{\varepsilon}{8 C'_0} \quad \text{and} \quad \int_{B_{2r}^c}g_{\lambda,k}(x,u_{\lambda,k})u_{\lambda,k}\dx < \frac{\varepsilon}{4},
	\end{equation}
	where we used the growth $g_{\lambda , k }(t) \leq C_{\lambda,k}t^{2^\ast_s -1}$ given by Remark \ref{r1} to ensure the finiteness of the integral. Fixing this $r$, the local compactness of the fractional Sobolev embedding implies that $u_n \rightarrow u_{\lambda,k}$ in $L^2(A_r)$. Hence, there exists $n_1 \in \mathbb{N}$ such that, for all $n > n_1$,
	\begin{equation}\label{eq:6}
		\| u_n \|_{L^2(A_r)} \leq \| u_n - u_{\lambda,k} \|_{L^2(A_r)} + \|u_{\lambda,k}\|_{L^2(A_r)} < \frac{\varepsilon}{8 C'_0} + \frac{\varepsilon}{8 C'_0} = \frac{\varepsilon}{4 C'_0}.
	\end{equation}
	Combining \eqref{eq:5} and \eqref{eq:6}, we obtain for $n > n_1$:
	\begin{equation}\label{fim1}
		\int_{B_{2r}^c}g_{\lambda,k}(x,u_{n})u_{n} \dx \leq \frac{\varepsilon}{2} + o_n(1). 
	\end{equation}
	Finally, on the bounded domain $B_{2r}$, the subcritical growth $g_{\lambda, k }(x,t) \leq C_{\lambda, k } t^{p-1}$ (Remark \ref{r1}) and the strong convergence $u_n \rightarrow u_{\lambda,k}$ in $L^p(B_{2r})$ guarantee the existence of $n_2 \in \mathbb{N}$ such that
	\begin{equation}\label{fim3}
	\left| \int_{B_{2r}}g_{\lambda,k}(x,u_{n})u_{n} \dx -   \int_{B_{2r}}g_{\lambda,k}(x,u_{\lambda,k})u_{\lambda,k} \dx \right| < \frac{\varepsilon}{4},
	\end{equation}
	whenever $n>n_2,$ up to a subsequence. Consequently, for all $n > \max\{n_1, n_2\}$, combining the estimates over $B_{2r}$ and $B_{2r}^c$ established in \eqref{sob1}, \eqref{fim1}, and \eqref{fim3} yields that the absolute difference of the integrals in \eqref{conv_fim} is strictly less than $\varepsilon$. Since $\varepsilon > 0$ is arbitrary, the convergence in \eqref{conv_fim} follows, which completes the proof.
\end{proof}
At this stage, it follows from \eqref{ps} and the strong convergence that $u_{\lambda,k} \in E$ solves \eqref{Plambda} with $J_{\lambda,k}(u_{\lambda,k})=c(J_{\lambda,k})$. Next, we apply a version of the Maximum Principle established in \cite{duarte2}. Although stated slightly differently therein, the proof can be readily adapted to our setting since it depends exclusively on the structural hypotheses \ref{K_um}--\ref{K_tres}.
\begin{lemmaletter}\cite[Proposition 4.1 and Theorem 4.7]{duarte2}\label{l_max}
	Assume \ref{K_um}--\ref{K_tres} and let 
	\begin{equation*}
	E_a =  \left\lbrace  u \in D_{K_s}^{s,2}(\mathbb{R}^{N}) :\int _{\mathbb{R}^N} a(x) u^2 \dx < + \infty   \right\rbrace,
	\end{equation*}
	where $a \in L^\infty _{\loca}(\mathbb{R}^N)$ with $a(x) \geq 0$ a.e. in $\mathbb{R}^N.$ Suppose that $u_0 \in E_a$ satisfies $ \mathcal{L}_{K_s} u_0 + a(x)u_0 \geq 0$ in $ \mathbb{R}^N,$ more precisely,
	\begin{equation}\label{max}
		\int_{\mathbb{R}^N}\int_{\mathbb{R}^N}(u_{0}(x)-u_{0}(y))(\phi(x)-\phi(y))K_s(x-y)\dxdy+\int_{\mathbb{R}^N}a(x)u_{0}\phi \dx \geq 0,
	\end{equation}
	for all $\phi \in E_a$ with $\phi \geq 0.$ Then either $u_0>0$ a.e. in $\mathbb{R}^N,$ or $u_0=0$ a.e. in $\mathbb{R}^N.$
\end{lemmaletter}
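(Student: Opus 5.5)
The statement to prove is Lemma~\ref{l_max}: a function $u_0\in E_a$ that is a weak supersolution of $\mathcal{L}_{K_s}u+a(x)u\ge 0$ is either strictly positive a.e. or identically zero. I would argue in two stages: first show $u_0\ge 0$, then a strong maximum principle dichotomy. Only \ref{K_um}--\ref{K_tres} will be used.

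\emph{Step 1: $u_0\geq 0$.} Test \eqref{max} with $\phi=u_0^-$. This test function is admissible because truncation is a contraction: $|u_0^-(x)-u_0^-(y)|\le|u_0(x)-u_0(y)|$ and $a(u_0^-)^2\le a u_0^2$, so $u_0^-\in E_a$. Write $u_0=u_0^+-u_0^-$. The pointwise inequality
\[
(u_0(x)-u_0(y))(u_0^-(x)-u_0^-(y))\le -(u_0^-(x)-u_0^-(y))^2
\]
follows from the fact that the cross terms $-u_0^+(x)u_0^-(y)-u_0^+(y)u_0^-(x)$ are nonpositive. Also $a u_0 u_0^-=-a(u_0^-)^2$. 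Substituting into \eqref{max} gives
\[
0\le -[u_0^-]^2-\int a (u_0^-)^2\dx .
\]
Hence $[u_0^-]=0$, so $u_0^-$ is constant; since it lies in $L^{2^\ast_s}$, it vanishes. Thus $u_0\ge0$.

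\emph{Step 2: strict positivity or zero.} Suppose $u_0\geq0$, $u_0\not\equiv0$, and that the zero set $Z=[u_0=0]$ has positive measure. Then there is a ball $B$ with $|Z\cap B|>0$ and $|[u_0>0]\cap B|>0$; otherwise we get a global dichotomy by connectedness, using density points. The cleanest route is a weak Harnack / logarithmic lemma in the spirit of Di Castro--Kuusi--Palatucci, adapted to kernels bounded below by \ref{K_dois}; note that \ref{K_tres} controls the tails.

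Take a cutoff $\psi\in C_0^\infty(B_{2\rho})$ and test \eqref{max} with $\phi=\psi^2/(u_0+\delta)$, which lies in $E_a$ because $a\in L^\infty_{\loca}$ and $\psi$ has compact support. This yields a bound on
\[
\int_{B_\rho}\int_{B_\rho}\Big|\log\frac{u_0(x)+\delta}{u_0(y)+\delta}\Big|^2K_s(x-y)\dxdy
\]
that is uniform in $\delta$, in terms of $\rho$, $\|a\|_{L^\infty(B_{2\rho})}$ and the tail. The contribution of $a$ is harmless, since $a u_0\phi\le a\psi^2$. Combining this with the lower bound \ref{K_dois} and a fractional Poincar\'e inequality on $B_\rho$, we then let $\delta\to0$. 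On $Z\cap B_\rho$ we have $\log(u_0+\delta)\approx\log\delta\to-\infty$, while on a positive-measure subset of $B_\rho$ where $u_0>c$ it stays bounded. Therefore the mean oscillation of $\log(u_0+\delta)$ blows up like $|\log\delta|^2\,|Z\cap B_\rho|\,|[u_0>c]\cap B_\rho|$, contradicting the uniform bound. Hence in each ball $u_0$ is either a.e. zero or a.e. positive, and covering $\mathbb{R}^N$ by overlapping balls gives the global dichotomy.

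A shortcut avoiding the log lemma is also available, and it is the main point to get right. Test with $\phi=\chi\ge0$ supported in $Z$ near a density point. Then $\int a u_0\phi=0$, and the double integral reduces to
\[
-2\int_{Z}\int_{[u_0>0]}u_0(y)\phi(x)K_s(x-y)\dy\dx\ \ (\le 0),
\]
which together with \eqref{max} and $K_s>0$ forces $u_0=0$ a.e. off $Z$. The obstacle is that such a $\phi$ need not lie in $E_a$. I would therefore take $\phi=(\varepsilon-u_0)^+\psi$, whose support lies in $[u_0<\varepsilon]$, and pass to the limit $\varepsilon\to0$ carefully. This limiting step, justified using \ref{K_tres}, is where I expect the real difficulty.
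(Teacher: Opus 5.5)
Your proof is correct. The paper gives no argument for Lemma~\ref{l_max}: it imports Proposition 4.1 (sign) and Theorem 4.7 (dichotomy) from \cite{duarte2} and only asserts that the proof there uses \ref{K_um}--\ref{K_tres}. Your Step 1 plus the logarithmic route is a self-contained check of that claim, and it shows that only positivity of $K_s$ and \ref{K_tres} actually enter.

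The uniform bound follows from the elementary inequality, valid for $A,B>0$ and $\psi_1,\psi_2\ge 0$,
\[
(A-B)\Big(\frac{\psi_1^2}{A}-\frac{\psi_2^2}{B}\Big)\le-\frac12\min\{\psi_1,\psi_2\}^2\log^2\frac{A}{B}+2(\psi_1-\psi_2)^2 .
\]
It yields $\iint_{B_\rho\times B_\rho}\log^2\frac{u_0(x)+\delta}{u_0(y)+\delta}\,K_s(x-y)\dxdy\le 4[\psi]^2+2\int a\psi^2$. Here \ref{K_tres} is needed only for $[\psi]<\infty$. No tail term arises, because $u_0\ge 0$ on all of $\mathbb{R}^N$ after Step 1, so $u_0(y)+\delta>0$ everywhere.

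You need neither a Poincar\'e inequality nor a covering argument. Take one ball $B_\rho$ meeting both $Z$ and $[u_0>c]$ in positive measure. The lower bound $\log^2(c/\delta)\iint_{(Z\cap B_\rho)\times([u_0>c]\cap B_\rho)}K_s(x-y)\dxdy$ then contradicts the uniform bound as $\delta\to0$.

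Your ``shortcut'', which you left open, also closes once you divide by $\varepsilon$. Set $\phi_\varepsilon=\varepsilon^{-1}(\varepsilon-u_0)^+\psi$ with $\psi\ge0$ and $|Z\cap[\psi>0]|>0$.
\begin{itemize}
\item The region $[u_0<\varepsilon]\times[u_0\ge\varepsilon]$ contributes at most $-\int_Z\int_{[u_0\ge\varepsilon]}u_0(y)\psi(x)K_s(x-y)\dy\dx$.
\item The other pieces of $[u_0,\phi_\varepsilon]$ are nonpositive, except the cross term $\iint_{[u_0<\varepsilon]^2}(u_0(x)-u_0(y))(1-u_0(y)/\varepsilon)(\psi(x)-\psi(y))K_s(x-y)\dxdy$.
\item That cross term tends to $0$ by dominated convergence: the majorant $|u_0(x)-u_0(y)||\psi(x)-\psi(y)|K_s(x-y)$ is integrable by \ref{K_tres}, and the pointwise limit lives on $Z\times Z$, where $u_0(x)-u_0(y)=0$.
\item The potential term satisfies $\int a u_0\phi_\varepsilon\le\varepsilon\int a\psi$.
\end{itemize}
Monotone convergence then gives $\int_Z\int_{[u_0>0]}u_0(y)\psi(x)K_s(x-y)\dy\dx\le 0$. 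With $K_s>0$, this is the dichotomy, obtained without any logarithmic estimate.
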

Since $g_{\lambda,k}(x,t) \geq 0$, the solution $u_{\lambda,k}$ also satisfies $\mathcal{L}_{K_s}u_{\lambda,k}+V(x)u_{\lambda,k}\geq 0$ in $\mathbb{R}^N,$ in the sense of \eqref{max} (see \eqref{def_ws}). Consequently, we conclude that $u_{\lambda,k}>0$ a.e. in $\mathbb{R}^N$.

\section{Moser iteration and $L^{\infty}$ estimates}\label{s_decay}
This section is devoted to establishing a uniform bound for the auxiliary solution $u_{\lambda,k}$ via the Moser iteration technique. We follow the approach developed in \cite{duarte1}, which relies on the properties of specific auxiliary functions. Let $\beta>1$ and define
\begin{equation*}
	\zeta_1(t) = t|t|^{2(\beta-1)} \quad \text{and} \quad \zeta_2(t) = t|t|^{\beta-1}.
\end{equation*}
Consider $t, \tau \in \mathbb{R}$ with $t \neq \tau$. By the Mean Value Theorem, there exist $\theta_{1}(t, \tau), \theta_{2}(t, \tau) \in \mathbb{R}$ such that
\begin{equation}\label{eq:12}
	\zeta_1'(\theta_{1}(t, \tau))=\frac{\zeta_1(t)-\zeta_1(\tau)}{t-\tau} \quad \text{and} \quad \zeta_2'(\theta_{2}(t, \tau))=\frac{\zeta_2(t)-\zeta_2(\tau)}{t-\tau}.
\end{equation}
Solving for $|\theta_{i}(t, \tau)|$ ($i=1,2$) in the expressions above, we obtain the explicit formulas:
\begin{equation*}
	|\theta_{1}(t, \tau)|=\left(\frac{1}{2\beta-1}\frac{t|t|^{2(\beta-1)}-\tau | \tau |^{2(\beta-1)}}{t- \tau }\right)^{\frac{1}{2(\beta-1)}},
\end{equation*}
and
\begin{equation*}
	|\theta_{2}(t, \tau)|=\left(\frac{1}{\beta}\frac{t|t|^{\beta-1}-\tau | \tau |^{\beta-1}}{t- \tau }\right)^{\frac{1}{\beta-1}}.
\end{equation*}
The following lemma establishes a fundamental inequality relating these functions.
\begin{lemmaletter}[\cite{duarte1}, Lemmas 4.3 and 4.4]\label{lem42}
$|\theta_{1}(t, \tau)|\geq|\theta_{2}(t, \tau)|.$
\end{lemmaletter}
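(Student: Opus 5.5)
The plan is to reduce the claim to Jensen's (or Cauchy--Schwarz's) inequality for averages over the segment between $\tau$ and $t$. The key observation is that both quantities inside the roots in the explicit formulas for $|\theta_1|$ and $|\theta_2|$ are \emph{mean values of derivatives}. Both $\zeta_1$ and $\zeta_2$ are $C^1$ on $\mathbb{R}$ because $\beta>1$, and
\begin{equation*}
\zeta_1'(x)=(2\beta-1)|x|^{2(\beta-1)}, \qquad \zeta_2'(x)=\beta|x|^{\beta-1}.
\end{equation*}
So, for $t\neq\tau$, the fundamental theorem of calculus gives
\begin{equation*}
\frac{1}{2\beta-1}\,\frac{\zeta_1(t)-\zeta_1(\tau)}{t-\tau}=\frac{1}{t-\tau}\int_\tau^t |x|^{2(\beta-1)}\dx,
\qquad
\frac{1}{\beta}\,\frac{\zeta_2(t)-\zeta_2(\tau)}{t-\tau}=\frac{1}{t-\tau}\int_\tau^t |x|^{\beta-1}\dx .
\end{equation*}
Write $h(x)=|x|^{\beta-1}\ge 0$ and let $\langle\cdot\rangle$ denote the average over the interval with endpoints $\tau$ and $t$. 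This average is a probability average regardless of the order of $\tau$ and $t$, since $(t-\tau)^{-1}\int_\tau^t$ is unchanged when the two endpoints are swapped. Then
\begin{equation*}
|\theta_1(t,\tau)|=\langle h^2\rangle^{\frac{1}{2(\beta-1)}}, \qquad |\theta_2(t,\tau)|=\langle h\rangle^{\frac{1}{\beta-1}}.
\end{equation*}

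Next, since $s\mapsto s^{1/(\beta-1)}$ is increasing on $[0,\infty)$, the inequality $|\theta_1|\ge|\theta_2|$ is equivalent to
\begin{equation*}
\langle h^2\rangle^{1/2}\ge\langle h\rangle .
\end{equation*}
This is precisely Cauchy--Schwarz (equivalently, Jensen's inequality for the convex function $s\mapsto s^2$) applied to the probability measure $|t-\tau|^{-1}\,\mathcal{X}_{I}\dx$ on the interval $I$ between $\tau$ and $t$. Both sides are nonnegative, so taking square roots is legitimate, and this concludes the argument.

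There is essentially no obstacle here. The only points requiring care are bookkeeping ones:
\begin{itemize}
\item checking that the explicit formulas for $|\theta_i|$ coincide with the integral averages above, including the normalising constants $2\beta-1$ and $\beta$ and the sign conventions when $\tau>t$;
\item noting that the bases under the fractional powers are nonnegative, since the $\zeta_i$ are increasing, so that the roots are well defined.
\end{itemize}
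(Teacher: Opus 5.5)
Your argument is correct: writing both difference quotients as averages of $|x|^{2(\beta-1)}$ and $|x|^{\beta-1}$ over the segment between $\tau$ and $t$, then applying Cauchy--Schwarz, gives exactly $|\theta_2(t,\tau)|^{2(\beta-1)}\le|\theta_1(t,\tau)|^{2(\beta-1)}$. The paper does not prove this lemma itself (it only cites \cite{duarte1}), but your computation is the same Cauchy--Schwarz-along-the-segment argument the paper uses in Lemma \ref{l_be_tres}; since $C_\beta = 2\beta^2/(2\beta-1)$, that lemma is precisely this inequality after dividing by $(x-y)^2$.
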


Our argument also makes use of a technical inequality related to the truncation levels. Let
\begin{equation*}
	h(t,\tau ):=2(nt-\tau | \tau |^{\beta-1})^{2}-C_\beta (t-\tau )(n^{2}t-\tau | \tau |^{2(\beta-1)}),
\end{equation*}
with $n \in \mathbb{N}$ and $C_\beta= 2 + \frac{2(\beta-1)^{2}}{2\beta-1}.$ We prove the following result in Appendix \ref{s_app_inequality}.

\begin{lemma}\label{rem1}
	We have $h(t, \tau )\leq 0$, provided that $|t|>n^{1/(\beta -1)} $ and $|\tau |\leq n^{1/(\beta -1)}.$
\end{lemma}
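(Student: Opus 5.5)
The plan is to read $h\le 0$ as a Cauchy--Schwarz inequality for truncated versions of $\zeta_2$ and $\zeta_1$, so that no case analysis in $t,\tau$ is needed. Set $a:=n^{1/(\beta-1)}$, so that $n=a^{\beta-1}$ and $n^2=a^{2(\beta-1)}$. Introduce the truncated functions
\begin{equation*}
g(s)=s\,\min\{|s|,a\}^{\beta-1},\qquad G(s)=s\,\min\{|s|,a\}^{2(\beta-1)} .
\end{equation*}
Under the hypotheses $|t|>a$ and $|\tau|\le a$ we have $g(t)=nt$, $G(t)=n^2t$, $g(\tau)=\tau|\tau|^{\beta-1}$ and $G(\tau)=\tau|\tau|^{2(\beta-1)}$. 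Hence
\begin{equation*}
h(t,\tau)=2\bigl(g(t)-g(\tau)\bigr)^2-C_\beta\,(t-\tau)\bigl(G(t)-G(\tau)\bigr).
\end{equation*}

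Both $g$ and $G$ are locally Lipschitz and odd, so we can write $g(t)-g(\tau)=\int_\tau^t g'(s)\,\mathrm{d}s$, and similarly for $G$. A direct computation gives the derivatives almost everywhere:
\begin{equation*}
g'(s)=\beta|s|^{\beta-1},\quad G'(s)=(2\beta-1)|s|^{2(\beta-1)}\ \text{ for } |s|<a,\qquad g'(s)=a^{\beta-1},\quad G'(s)=a^{2(\beta-1)}\ \text{ for } |s|>a.
\end{equation*}
From these formulas we read off the pointwise bound $g'(s)^2\le \frac{\beta^2}{2\beta-1}\,G'(s)$ a.e. It comes from the inner region, and in the outer region it holds because $\beta^2\ge 2\beta-1$.

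Next apply Cauchy--Schwarz on the interval between $\tau$ and $t$, using that $G'\ge 0$:
\begin{equation*}
\bigl(g(t)-g(\tau)\bigr)^2\le |t-\tau|\,\Bigl|\int_\tau^t g'(s)^2\,\mathrm{d}s\Bigr|\le \frac{\beta^2}{2\beta-1}\,|t-\tau|\,\bigl|G(t)-G(\tau)\bigr|=\frac{\beta^2}{2\beta-1}\,(t-\tau)\bigl(G(t)-G(\tau)\bigr).
\end{equation*}
The last equality holds because $G$ is nondecreasing. Finally, the constant is exactly the right one:
\begin{equation*}
C_\beta=2+\frac{2(\beta-1)^2}{2\beta-1}=\frac{2(2\beta-1)+2(\beta-1)^2}{2\beta-1}=\frac{2\beta^2}{2\beta-1},
\end{equation*}
so $2(g(t)-g(\tau))^2\le C_\beta(t-\tau)(G(t)-G(\tau))$, which is $h(t,\tau)\le 0$.

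The only step needing care is justifying the reduction to $g,G$. This requires checking that the hypotheses $|t|>a$ and $|\tau|\le a$ place $t$ in the linear region and $\tau$ in the power region, including the sign conventions when $t<0$ or $\tau<0$. Those cases are handled by oddness of $g$ and $G$, and the case $t<\tau$ is absorbed by the absolute values above. The argument in fact gives $h\le 0$ for all $t,\tau$, which is consistent with the statement.
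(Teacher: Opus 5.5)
Your proof is correct, and it takes a different route from the paper's. The paper first proves the untruncated inequality (Lemma \ref{l_be_tres}) by the same Cauchy--Schwarz argument. Then, for fixed $\tau$, it studies $h_\tau$ as a function of $t$ (a concave quadratic). It computes $h_\tau'(t)=-\lambda_\beta n^2t+p_\beta(\tau)$ and bounds $|p_\beta(\tau)|\le\lambda_\beta n^2n^{1/(\beta-1)}$ through the convexity analysis of $q_\beta$ (Lemmas \ref{l_be_um}--\ref{l_be_dois}). From this it deduces that $h_\tau$ increases on $(-\infty,-n^{1/(\beta-1)})$ and decreases on $(n^{1/(\beta-1)},\infty)$. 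It concludes from the endpoint values $h_\tau(\pm n^{1/(\beta-1)})\le0$ (Lemma \ref{l_be_quatro}).

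You instead apply Cauchy--Schwarz directly to the truncated pair $(g,G)$. The only extra input is the pointwise bound $g'^2\le\frac{\beta^2}{2\beta-1}G'$ on the linear part, i.e.\ $(\beta-1)^2\ge0$. This removes the need to locate the vertex of $h_\tau$. It makes Lemmas \ref{l_be_um}, \ref{l_be_dois} and \ref{l_be_quatro} unnecessary and contains Lemma \ref{l_be_tres} as the case where no truncation is active.

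It also gives more than the lemma. Your $g$ and $G$ are exactly $\zeta_{2,n}$ and $\zeta_{1,n}$ from the Moser iteration. So your inequality, valid for all $t,\tau$, integrated against $K_s(x-y)$, gives $[w_n,w_n]\le\frac{\beta^2}{2\beta-1}[u_{\lambda,k},v_n]$ at once. That bypasses the $A_n/B_n$ splitting and Lemma \ref{lem42} in that proof.

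One correction to your closing sentence. What holds for all $t,\tau$ is the truncated inequality $2(g(t)-g(\tau))^2\le C_\beta(t-\tau)(G(t)-G(\tau))$, not $h\le0$ itself. The function $h$ agrees with the truncated expression only in the region $|t|>n^{1/(\beta-1)}$, $|\tau|\le n^{1/(\beta-1)}$. Outside it, $h$ can be positive: for instance $h(t,t)=2t^2(n-t^{\beta-1})^2>0$ for $0<t<n^{1/(\beta-1)}$.
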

\begin{proposition}
For each $\lambda>0$ and $k \in \mathbb{N}$, the solution $u_{\lambda,k}$ of the auxiliary problem satisfies
\begin{equation*}
	\|u_{\lambda,k}\|_{\infty}\leq C\left(1+\lambda k^{q-p}\right)^{\frac{1}{2^{\ast}_s-p}}\|u_{\lambda,k}\|_{2^{\ast}_s},
\end{equation*}
where $C$ is a positive constant independent of $\lambda$ and $k$.
	\end{proposition}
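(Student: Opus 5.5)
Write $u=u_{\lambda,k}>0$ and $M_{\lambda,k}=1+\lambda k^{q-p}$. The plan is a Moser iteration run on truncated powers of $u$. Remark \ref{r1} i) together with Remark \ref{r1} iv) gives the pointwise bound
\[
0\le g_{\lambda,k}(x,u)\le C M_{\lambda,k}\,u^{p-1}.
\]
The case $|x|\le R$ uses $f_{\lambda,k}\le C(1+\lambda k^{q-p})t^{p-1}$ directly, and the case $|x|>R$ uses $\bar f_{\lambda,k}\le f_{\lambda,k}$. Since $p<2^\ast_s$, this is a subcritical nonlinearity, and its constant is the only place where $\lambda,k$ enter.

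For $\beta>1$ and a truncation level $n\in\mathbb{N}$, set $T=n^{1/(\beta-1)}$. The test function will be $\varphi_n=u\min\{u,T\}^{2(\beta-1)}$ and the companion function $w_n=u\min\{u,T\}^{\beta-1}$. On $[u\le T]$ these reduce to $\zeta_1(u)$ and $\zeta_2(u)$. On $[u>T]$ they are $n^2u$ and $nu$, which is exactly the structure appearing in $h(t,\tau)$. Both functions lie in $E$, since they are Lipschitz functions of $u$ vanishing at $0$.

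The first key step is the pointwise inequality
\[
(w_n(x)-w_n(y))^2\le \frac{C_\beta}{2}\,(u(x)-u(y))(\varphi_n(x)-\varphi_n(y)).
\]
We verify it by cases.
\begin{itemize}
\item When both values are $\le T$, it follows from the mean value representations \eqref{eq:12} and Lemma \ref{lem42}. Since $|\theta_2|\le|\theta_1|$, we get $\zeta_2'(\theta_2)^2=\beta^2|\theta_2|^{2(\beta-1)}\le \frac{\beta^2}{2\beta-1}\zeta_1'(\theta_1)$, and $\frac{\beta^2}{2\beta-1}\le C_\beta/2$.
\item When both values are $>T$, it is trivial, because $n^2\le C_\beta n^2/2$.
\item The mixed case is exactly Lemma \ref{rem1}, i.e. $h\le0$.
\end{itemize}
Integrating against $K_s$ then gives $[w_n]^2\le \frac{C_\beta}{2}[u,\varphi_n]$.

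Next we test \eqref{def_ws} with $\varphi_n$ and drop the nonnegative $V$-term. Combined with the Sobolev inequality \eqref{constS}, this yields
\[
\mathbb{S}_{K_s}\|w_n\|_{2^\ast_s}^2\le C\beta^2 M_{\lambda,k}\int u^{p-2}w_n^2 .
\]
The right-hand side is controlled with Hölder by writing $u^{p-2}w_n^2=u^{p-2}\cdot w_n^2$ with exponents $\frac{2^\ast_s}{p-2}$ and $\frac{2^\ast_s}{2^\ast_s-p+2}$. This gives
\[
\|w_n\|^2_{2^\ast_s}\le C\beta^2M_{\lambda,k}\|u\|_{2^\ast_s}^{p-2}\|w_n\|^2_{2\alpha},\qquad 2\alpha=\frac{2\cdot 2^\ast_s}{2^\ast_s-p+2}<2^\ast_s .
\]
Letting $n\to\infty$ by monotone convergence, we obtain the reverse Hölder step
\[
\|u\|_{\beta 2^\ast_s}\le (C\beta^2M_{\lambda,k}\|u\|_{2^\ast_s}^{p-2})^{1/(2\beta)}\|u\|_{2\alpha\beta}.
\]
If $u\in L^{2\alpha\beta}$, the right side is finite and the step is legitimate. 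We iterate with $\beta_0$ chosen so that $2\alpha\beta_0=2^\ast_s$ and $\beta_{m}=\chi^m\beta_0$, where $\chi=2^\ast_s/(2\alpha)>1$. The sums $\sum\frac1{\beta_m}$ and $\sum\frac{\log\beta_m}{\beta_m}$ converge, so
\[
\|u\|_\infty\le C\,(M_{\lambda,k}\|u\|_{2^\ast_s}^{p-2})^{\sigma}\|u\|_{2^\ast_s},\qquad \sigma=\sum_m\frac{1}{2\beta_m}.
\]
A direct computation gives $\sigma=\frac{1}{2^\ast_s-p}$, since $\beta_0=\chi$ and $\sum_m \chi^{-m-1}/2=\frac{1}{2(\chi-1)}=\frac{1}{2^\ast_s-p}$.

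This leaves an extra factor $\|u\|_{2^\ast_s}^{(p-2)/(2^\ast_s-p)}$. We bound it uniformly using \eqref{est1}, which gives $\|u\|_{2^\ast_s}^2\le C\,c(I_0)$ with $c(I_0)$ independent of $\lambda,k$. The factor is therefore absorbed into $C$, and this yields the stated estimate.

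I expect the main obstacle to be the truncation bookkeeping: checking that $\varphi_n\in E$ and that the three-case pointwise inequality holds with one constant $C_\beta$ growing polynomially in $\beta$. Lemma \ref{lem42} and Lemma \ref{rem1} are precisely tailored to this, and $C_\beta\le C\beta$ keeps the iteration product finite.
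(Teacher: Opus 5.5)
Your proposal is correct and follows the paper's proof essentially step for step. You use the same truncations $\zeta_{1,n}\circ u$ and $\zeta_{2,n}\circ u$, and the same Lemma \ref{lem42} on the region where both values are below $n^{1/(\beta-1)}$ together with Lemma \ref{rem1} on the mixed region, to get $[w_n]^2\le\frac{\beta^2}{2\beta-1}[u,v_n]$. The Hölder exponent $2^\ast_s/(p-2)$, the geometric iteration with ratio $(2^\ast_s-p+2)/2$, and the use of \eqref{est1} to absorb $\|u_{\lambda,k}\|_{2^\ast_s}^{p-2}$ into the constant also match the paper. The only differences are cosmetic: you state the energy inequality as a single pointwise three-case bound rather than decomposing region by region, and you pass to the limit by monotone convergence rather than Fatou on $A_n$.
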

	\begin{proof}
		For each $n \in \mathbb{N},$ define $A_{n}:=\left\{x \in \mathbb{R}^{N}; |u_{\lambda,k}(x)|^{\beta-1}\leq n\right\},$ $B_{n}:=A_n^c,$ 
		\begin{equation*}
			\zeta_{1,n} (t)=
			\left\{ 
			\begin{aligned}
			&t|t|^{2(\beta-1)},&\text{if }|t|^{\beta-1}\leq n,\\
			&n^{2}t,&\text{if }|t|^{\beta-1}>n,
			\end{aligned}
			\right.
			\quad \text{and} \quad 	
			\zeta_{2,n} (t)=
			\left\{ 
			\begin{aligned}
				&t|t|^{(\beta-1)},&\text{if }|t|^{\beta-1}\leq n,\\
				&nt,&\text{if }|t|^{\beta-1}>n.
			\end{aligned}
			\right.
		\end{equation*}
		Let	$v_{n}:= v_{n,\lambda, k}=\zeta_{1,n}\circ u_{\lambda,k}$ and $w_{n}:=w_{n,\lambda, k}=\zeta_{2,n}\circ u_{\lambda,k}$.	For each $x,y \in A_{n}$, choose $\theta_{1}(u_{\lambda,k}(x),u_{\lambda,k}(y))$ and $ \theta_{2}(u_{\lambda,k}(x),u_{\lambda,k}(y))$, satisfying \eqref{eq:12}. We denote $\theta_{i}(u_{\lambda,k}(x),u_{\lambda,k}(y))$ by $\theta_{i}(x,y),$ $i=1,2$. Then
		\begin{multline}\label{eq_der}
			v_{n}(x)-v_{n}(y)=(2\beta-1)|\theta_{1}(x,y)|^{2(\beta-1)}(u_{\lambda,k}(x)-u_{\lambda,k}(y))\text{ in }A_n\\ \text{ and }w_{n}(x)-w_{n}(y)=\beta|\theta_{2}(x,y)|^{\beta-1}(u_{\lambda,k}(x)-u_{\lambda,k}(y))\text{ in }A_n.
		\end{multline}
		Since $\zeta_{1,n}(0)=\zeta_{2,n}(0)=0$ and $\zeta'_{1,n}, \zeta'_{2,n} \in L^\infty (\mathbb{R}),$ a simple estimate involving \eqref{eq_der} shows that $v_n$ and $w_n$ belong to $E.$ In view of $\left[w_{n},w_{n}\right]_{A_{n}\times B_{n}} = \left[w_{n},w_{n}\right]_{B_{n}\times A_{n}}$ and $\left[u_{\lambda,k},v_{n}\right]_{A_{n}\times B_{n} } = \left[u_{\lambda,k},v_{n}\right]_{B_{n}\times A_{n} }$ (see \ref{K_um}), Lemma \ref{lem42} and the last identities imply
		\begin{multline*}
			\left[w_{n},w_{n}\right] -\left[u_{\lambda,k},v_{n}\right] \leq  (\beta-1)^{2}\int_{A_{n}}\int_{A_{n}}|\theta_{1}(x,y)|^{2(\beta-1)}(u_{\lambda,k}(x)-u_{\lambda,k}(y))^{2}K_s(x-y)\dxdy\\ 
			+2\left[w_{n},w_{n}\right]_{A_{n}\times B_{n}}-2\left[u_{\lambda,k},v_{n}\right]_{A_{n}\times B_{n} }.
		\end{multline*}
		Next, we use the facts that $\left[u_{\lambda,k},v_{n}\right] = \left[u_{\lambda,k},v_{n}\right]_{A_n \times A_n} +2\left[u_{\lambda,k},v_{n}\right] _{A_n \times B_n}+ \left[u_{\lambda,k},v_{n}\right] _{B_n \times B_n}$ (see \ref{K_um}) and $\left[u_{\lambda,k},v_{n}\right] _{B_n \times B_n} \geq 0$ to further estimate
		\begin{multline*}
			(\beta-1)^{2}\int_{A_{n}}\int_{A_{n}}|\theta_{1}(x,y)|^{2(\beta-1)}(u_{\lambda,k}(x)-u_{\lambda,k}(y))^{2}K_s(x-y)\dxdy\\
			+2\left[w_{n},w_{n}\right]_{A_{n}\times B_{n}}-2\left[u_{\lambda,k},v_{n}\right]_{A_{n}\times B_{n} }\\
			 \leq \frac{(\beta-1)^{2}}{2\beta-1}[u_{\lambda,k},v_{n}] +2\left[w_{n},w_{n}\right]_{B_{n}\times A_{n}}-\left(2+\frac{2(\beta-1)^{2}}{2\beta-1}  \right)\left[u_{\lambda,k},v_{n}\right]_{B_{n} \times A_{n}}.
			\end{multline*}
Taking  $x\in B_{n}$, $y \in A_{n}$, $t=u_{\lambda,k}(x)$ and $\tau =u_{\lambda,k}(y)$ in Lemma \ref{rem1} we obtain
\begin{equation*}
2\left[w_{n},w_{n}\right]_{B_{n}\times A_{n}}-\left(2+\frac{2(\beta-1)^{2}}{2\beta-1}  \right)\left[u_{\lambda,k},v_{n}\right]_{B_{n}\times A_{n}} \leq 0.	
\end{equation*}
Consequently, since $\beta > 1$, we have
\begin{equation*}
	\left[w_{n},w_{n}\right] \leq  \frac{\beta^2}{2\beta-1}[u_{\lambda,k}, v_n] \leq  \beta [u_{\lambda,k},v_{n}] .
\end{equation*}
		On the other hand, choosing $v = v_n$ in \eqref{def_ws} and recalling that $u_{\lambda,k}v_n=w_{n}^2$ yields
		\begin{equation*}
		\left[w_{n},w_{n}\right] +\int_{\mathbb{R}^N}V(x)w_{n}^2dx\leq \beta \int_{\mathbb{R}^N}g_{\lambda,k}(x,u_{\lambda,k})v_n \dx.
		\end{equation*}
		Now we use Remark \ref{r1}, \eqref{est1}, \eqref{constS} and $u_{\lambda,k}v_n=w_n^2$ to get the following estimate
		\begin{align*}
				\left(\int_{A_{n}}|w_{n}|^{2^{\ast}_{s}}\dx\right)^{\frac{2}{2^{\ast}_{s}}} & \leq \mathbb{S}_{K_s}\beta\int_{\mathbb{R}^N}g_{\lambda,k}(x,u_{\lambda,k})v_n  \dx \\
				&\leq \mathbb{S}_{K_s}\beta C_1(1+\lambda k^{q-p})\int_{\mathbb{R}^{N}}u_{\lambda,k}^{p-2}w_{n}^{2} \dx\\
				&\leq \mathbb{S}_{K_s}\beta C_1(1+\lambda k^{q-p}) \|u_{\lambda,k}\|_{2^{\ast}_s}^{p-2}\|w_{n}\|_{\frac{2r}{r-1}}^{2}\\
				&\leq \beta C_0(1+\lambda k^{q-p}) \|w_{n}\|_{\frac{2r}{r-1}}^{2},
		\end{align*}
	where $r:= 2^{\ast}_s / (p-2)>1 $ and $C_0=\mathbb{S}_{K_s} C_1 \left( \frac{2 \theta c(I_0)}{\mathbb{S}_{ K_s} (\theta -2)} \right)^{\frac{p-2}{2}}$ is a constant independent of $\lambda$ and $k$. Since $|w_{n}|\leq |u_{\lambda,k}|^{\beta}$ in $B_{n}$ and $|w_{n}|=|u_{\lambda,k}|^{\beta}$ in $A_{n}$, this yields
\begin{equation*}
	\left(\int_{A_{n}}|u_{\lambda,k}|^{ 2^{\ast}_{s} \beta}\dx\right)^{\frac{2}{2^{\ast}_{s}}}   \leq \beta C_0 \left(1+\lambda k^{q-p}  \right)\left( \int_{\mathbb{R}^{N}}|u_{\lambda,k}|^{\frac{2r}{r-1} \beta}\dx\right)^{\frac{r-1}{r}}.
\end{equation*}
Passing to the limit via Fatou's Lemma leads to
		\begin{equation}\label{eq:19}
			\|u_{\lambda,k} \|_{  2^{\ast}_{s}\beta   } \leq \left( \beta C_0(1+\lambda k^{q-p})\right)^{\frac{1}{2\beta}}\|u_{\lambda,k}\|_{2\beta r_{0}},
		\end{equation}
		with $r_{0}=  r/(r-1) $.  Next we define  $\eta:= 2^{\ast}_{s} / (2 r_0)  >1.$ Taking $\beta=\eta$ in \eqref{eq:19} and noticing that $2\beta r_{0} =2\eta r_0= 2^{\ast}_{s}$, we have
		\begin{equation}\label{eq:20}
			\|u_{\lambda, k}\|_{2^{\ast}_{s}\eta} \leq \left(\eta C_0(1+\lambda k^{q-p})   \right)^{\frac{1}{2\eta}}\|u_{\lambda,k}\|_{2^{\ast}_{s}}.
		\end{equation}
		In the next step we take $\beta=\eta^{2}$ in \eqref{eq:19} and use \eqref{eq:20},
		\begin{align*}
			\|u_{\lambda,k}\|_{2^{\ast}_{s}\eta^{2}} & \leq \eta^{\frac{1}{\eta^{2}}} \left(C_0(1+\lambda k^{q-p})  \right)^{\frac{1}{2\eta^{2}}}\|u_{\lambda,k}\|_{2^{\ast}_{s}\eta} \\
			& \leq \eta^{\frac{1}{2}\left(  \frac{2}{\eta^{2}}+\frac{1}{\eta} \right)} \left(C_0(1+\lambda k^{q-p})  \right)^{\frac{1}{2}\left(   \frac{1}{\eta^{2}}+\frac{1}{\eta}\right)}\|u_{\lambda,k}\|_{2^{\ast}_{s}}.
		\end{align*}
		Iterating this argument, we conclude that
		\begin{equation}\label{eq4.22}
			\|u_{\lambda,k}\|_{2^{\ast}_{s}\eta^{m}} \leq \eta^{ \frac{1}{2}\left(\frac{1}{\eta}+\frac{2}{\eta^{2}}+\cdots+\frac{m}{\eta^{m}}\right)} \left(C_0(1+\lambda k^{q-p})\right)^{\frac{1}{2}\left(\frac{1}{\eta}+\frac{1}{\eta^{2}}+\cdots+\frac{1}{\eta^{m}}\right)}\|u_{\lambda,k}\|_{2^{\ast}_{s}},
		\end{equation}
		for all $m \in \mathbb{N}$. A simple calculation shows
		\begin{equation*}
			\sum_{i=1}^{\infty}\frac{i}{\eta^{i}}< +\infty\quad \text{and}\quad \sum_{i=1}^{\infty}\frac{1}{\eta^{i}}=\frac{1}{(\eta-1)}.
		\end{equation*}
		Since $\eta > 1$, passing to the limit as $m \to \infty$ in \eqref{eq4.22}, we find
		\begin{equation*}
				\|u_{\lambda,k} \|_{\infty} \leq \eta^{\left(   \sum_{i=1}^{\infty}\frac{i}{2\eta^{i}}   \right) } \left(C_0(1+\lambda k^{q-p})   \right)^{\frac{1}{2(\eta-1)}}\|u_{\lambda,k}\|_{2^{\ast}_{s}} = C \left(1+\lambda k^{q-p} \right)^{\frac{1}{2^{\ast}_s-p}}\|u_{\lambda,k}\|_{2^{\ast}_{s}},
		\end{equation*}
		where $C$ is a constant that does not depend on $\lambda$ or $k$.
	\end{proof}
	
	\begin{corollary}\label{cor45}
		For each $\lambda>0$ and $k \in \mathbb{N}$, we have
		\begin{equation*}
			\|u_{\lambda,k}\|_{\infty}  \leq C_\ast(1+\lambda k^{q-p})^{\frac{1}{2^{\ast}_s-p}}
		\end{equation*}
		for some $C_\ast>0$ independent of $\lambda$ and $k$.
	\end{corollary}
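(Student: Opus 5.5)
The corollary follows by bounding the $L^{2^\ast_s}$ norm in the preceding proposition uniformly in $\lambda$ and $k$. The proposition gives
\begin{equation*}
\|u_{\lambda,k}\|_{\infty}\leq C\left(1+\lambda k^{q-p}\right)^{\frac{1}{2^{\ast}_s-p}}\|u_{\lambda,k}\|_{2^{\ast}_s},
\end{equation*}
with $C$ independent of $\lambda$ and $k$. So it suffices to show that $\|u_{\lambda,k}\|_{2^\ast_s}\leq M$ for some $M$ that depends on neither $\lambda$ nor $k$. Then $C_\ast:=CM$ works.

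For this bound I will use the energy estimate \eqref{est1}, passed to the limit. By Proposition \ref{p_conv}, $u_n\to u_{\lambda,k}$ strongly in $E$. Since $E\hookrightarrow L^{2^\ast_s}(\mathbb{R}^N)$ continuously, we also have $\|u_n\|_{2^\ast_s}\to\|u_{\lambda,k}\|_{2^\ast_s}$. Letting $n\to\infty$ in \eqref{est1} gives
\begin{equation*}
c(I_0)\geq \left(\frac{\theta-2}{2\theta}\right)\mathbb{S}_{K_s}\|u_{\lambda,k}\|_{2^\ast_s}^2 .
\end{equation*}
Alternatively, one can use $J_{\lambda,k}(u_{\lambda,k})=c(J_{\lambda,k})\leq c(I_0)$ together with the inequality from Lemma \ref{l_psbounded} applied at the critical point $u_{\lambda,k}$.

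The key observation is that $c(I_0)$ is defined only through $I_0$, $\theta$ comes from \ref{f_dois}, and $\mathbb{S}_{K_s}$ depends only on the kernel. None of these depend on $\lambda$ or $k$. Hence
\begin{equation*}
\|u_{\lambda,k}\|_{2^\ast_s}\leq \left(\frac{2\theta\, c(I_0)}{(\theta-2)\mathbb{S}_{K_s}}\right)^{1/2}=:M .
\end{equation*}
This is the same quantity that already appears inside $C_0$ in the proof of the proposition, so the argument is consistent with it.

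The only point needing care is that the comparison $c(J_{\lambda,k})\leq c(I_0)$ must hold for every $\lambda>0$ and $k$. This follows from $J_{\lambda,k}\leq I_0$ pointwise, which in turn rests on $G_{\lambda,k}(x,t)\geq F(t)$. For $|x|\leq R$ this is $F_{\lambda,k}\geq F$ (Remark \ref{r1}). Outside $B_R$, one compares the mountain pass classes directly, using that $\mathcal{M}(I_0)\subset\mathcal{M}(J_{\lambda,k})$ whenever $J_{\lambda,k}\leq I_0$. The paper has already stated this comparison, so I take it as given, and no further obstacle remains.
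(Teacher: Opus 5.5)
Your reduction is the same as the paper's: the preceding proposition plus a $\lambda,k$-independent bound on $\|u_{\lambda,k}\|_{2^\ast_s}$ obtained from \eqref{est1} and Proposition \ref{p_conv}. However, the step you single out as ``the only point needing care'' is a genuine gap, and your justification of it is circular. The inclusion $\mathcal{M}(I_0)\subset\mathcal{M}(J_{\lambda,k})$ requires $J_{\lambda,k}\le I_0$, which is exactly what is at stake outside $B_R$.

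In fact $J_{\lambda,k}\le I_0$ is false. For $|x|>R$, Remark \ref{r1} only gives the upper bound $G_{\lambda,k}(x,t)\le V(x)t^2/(2\nu)$, while $F(t)\ge C_1t^\theta-C_2$ with $\theta>2$. Concretely, take $\psi\in C_0^\infty(\mathbb{R}^N\setminus\overline{B}_R)$ with $\psi\ge 0$, $\psi\neq0$. Then $J_{\lambda,k}(t\psi)\ge \frac{t^2}{2}\left(1-\frac{1}{\nu}\right)\|\psi\|^2>0$ for all $t>0$, whereas $I_0(t\psi)\to-\infty$. So for $T$ large, the path $t\mapsto tT\psi$ lies in $\mathcal{M}(I_0)$ but not in $\mathcal{M}(J_{\lambda,k})$.

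Even the level comparison $c(J_{\lambda,k})\le c(I_0)$ can fail. Let $V$ be a large constant $M$ on $B_R$, keeping $V$ fixed outside $B_R$. Then $c(I_0)$ stays bounded independently of $M$, by using a path supported in $B_R^c$. On the other hand, $J_{\lambda,k}(u)\ge\left(\frac{1}{2}-\frac{1}{2\nu}\right)\|u\|^2-\int_{B_R}F_{\lambda,k}(u)\dx$. Combined with $F_{\lambda,k}(t)\le C_{\lambda,k}t^p$, interpolation, and $\|u\|_{L^2(B_R)}^2\le M^{-1}\|u\|^2$, this forces $c(J_{\lambda,k})\to\infty$ as $M\to\infty$, for fixed $\lambda,k$. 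The paper's sentence ``Remark \ref{r1} also implies $J_{\lambda,k}(u)\le I_0(u)$'' preceding \eqref{est1} has the same defect, so deferring to it does not close the gap.

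The repair keeps your structure: compare along one fixed path supported in $B_R$, where no penalization takes place. Take $\varphi\in C_0^\infty(\mathbb{R}^N)$ with $\varphi\ge0$, $\varphi\neq0$ and $\supp\varphi\subset B_R$, as in the proof of the mountain pass geometry of $J_{\lambda,k}$. Since $G_{\lambda,k}(x,\cdot)=F_{\lambda,k}\ge F$ for $|x|\le R$ and $\varphi=0$ elsewhere, $J_{\lambda,k}(t\varphi)\le I_0(t\varphi)$ for every $t\ge0$. Fix $T>0$ with $I_0(T\varphi)<0$, using Proposition \ref{p_ge}; $T$ depends neither on $\lambda$ nor on $k$. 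Then $\gamma(t)=tT\varphi$ belongs to $\mathcal{M}(J_{\lambda,k})$ for all $\lambda>0$ and $k\in\mathbb{N}$, so
\[
c(J_{\lambda,k})\le\max_{t\in[0,1]}J_{\lambda,k}(tT\varphi)\le\max_{t\in[0,1]}I_0(tT\varphi)=:d_0 .
\]
Now apply the inequality of Lemma \ref{l_psbounded} at the critical point $u_{\lambda,k}$, where $J_{\lambda,k}(u_{\lambda,k})=c(J_{\lambda,k})$. This yields $\|u_{\lambda,k}\|_{2^\ast_s}^2\le 2\theta d_0/\big((\theta-2)\mathbb{S}_{K_s}\big)$, and the rest of your argument goes through with $c(I_0)$ replaced by $d_0$. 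The same replacement is needed in the constant $C_0$ in the proof of the preceding proposition, which uses this very bound.
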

	\begin{proof}
		This is an immediate consequence of the previous result, combined with estimate \eqref{est1} and Proposition \ref{p_conv}.
	\end{proof}
	\section{Proof of Theorem \ref{main}}\label{s_proof}
As previously mentioned, the proof of our main result relies on analyzing a suitable cutoff of the fundamental solution of the fractional Laplacian, which acts as a supersolution for the integro-differential operator \eqref{equ1}. This approach is combined with comparison techniques based on the Maximum Principle from \cite{duarte2} (Lemma \ref{l_max}). We proceed by establishing some key properties of the function $\Gamma_\ast$. First, we note that condition \ref{K_quatro} ensures that $D_{K_s}^{s,2}(\mathbb{R}^N)=D^{s,2}(\mathbb{R}^N)$ and the norms of these spaces are equivalent.
\begin{lemma}\label{l_aste}
$\Gamma_\ast \in D^{s,2}(\mathbb{R}^N).$
\end{lemma}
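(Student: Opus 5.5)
We must show that $\Gamma_\ast\in L^{2^\ast_s}(\mathbb{R}^N)$ and that its Gagliardo seminorm is finite. Integrability is immediate. Since $\Gamma_\ast\le R^{-(N-2s)}$ on $B_R$ and $\Gamma_\ast=|x|^{-(N-2s)}$ outside, we get $\Gamma_\ast^{2^\ast_s}=|x|^{-2N}$ for $|x|\ge R$, which is integrable at infinity. Hence $\Gamma_\ast\in L^{2^\ast_s}(\mathbb{R}^N)$, and also $\Gamma_\ast\in L^\infty$.

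For the seminorm, the plan is to show that $\Gamma_\ast$ is globally Lipschitz with a decaying local Lipschitz constant. Indeed $\Gamma_\ast=\min\{R^{-(N-2s)},|x|^{-(N-2s)}\}$ is a minimum of Lipschitz functions on $\{|x|\ge R/2\}$, and it is constant near the origin. Its a.e. gradient satisfies $|\nabla\Gamma_\ast(x)|\le (N-2s)|x|^{-(N-2s+1)}\mathcal{X}_{\{|x|\ge R\}}$. I would then split the double integral according to $|x-y|<\rho(x)$ or $|x-y|\ge \rho(x)$, where $\rho(x)=\max\{R,|x|\}/2$.

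\emph{Long range.} For $|x-y|\ge\rho(x)$, use $(\Gamma_\ast(x)-\Gamma_\ast(y))^2\le 2\Gamma_\ast(x)^2+2\Gamma_\ast(y)^2$. By symmetry of the kernel in $x,y$, it suffices to bound terms of the form
\[
\int\Gamma_\ast(x)^2\int_{|x-y|\ge\rho(x)}|x-y|^{-N-2s}\dy\dx\le C\int\Gamma_\ast(x)^2\rho(x)^{-2s}\dx .
\]
This behaves like $\int_{|x|\ge R}|x|^{-2(N-2s)-2s}\dx$, which converges because $2N-2s>N$. The cross term $\Gamma_\ast(y)^2$ on the region $|x-y|\ge\rho(x)$ requires care, since the region is defined via $x$. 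I would handle it by further splitting into $|y|\le 2|x|$ and $|y|>2|x|$. In the second case $|x-y|\ge|y|/2$, which gives the bound $\int\Gamma_\ast(y)^2|y|^{-2s}$. In the first case $\Gamma_\ast(y)^2$ is controlled by integrating $y$ over $B_{2|x|}$, giving roughly $|x|^{-2s}\int_{B_{2|x|}}\Gamma_\ast^2\lesssim |x|^{-2s}|x|^{-N+4s}$ times $\rho(x)^{-N-2s}$-type factors. Alternatively, and more simply, I could replace the splitting radius by the symmetric choice $\max\{\rho(x),\rho(y)\}$ to avoid this issue.

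\emph{Short range.} For $|x-y|<\rho(x)$, the segment $[x,y]$ stays in the region $|z|\ge|x|/2$ (or in $B_{R}$-type regions where the gradient is bounded by $CR^{-(N-2s+1)}$). The mean value inequality gives
\[
|\Gamma_\ast(x)-\Gamma_\ast(y)|\le C\max\{R,|x|\}^{-(N-2s+1)}|x-y| .
\]
Integrating $|x-y|^{2-N-2s}$ over $|x-y|<\rho(x)$ yields $C\rho(x)^{2-2s}$. The total is therefore bounded by $\int\max\{R,|x|\}^{-2(N-2s+1)+2-2s}\dx$, whose integrand is $\sim|x|^{-2N+2s}$ at infinity, integrable since $N>2s$. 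The contribution of the bounded region is finite.

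Summing the two contributions gives $\Gamma_\ast\in D^{s,2}(\mathbb{R}^N)$. I expect the only delicate point to be the bookkeeping of the nonsymmetric region in the long-range term, which the symmetric splitting radius resolves. An alternative approach would be to approximate $\Gamma_\ast$ by $\Gamma_\ast\varphi(\cdot/L)$ with cutoffs and check uniform bounds, but the direct computation above suffices.
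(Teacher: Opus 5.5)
Your proof is correct, but it is organized differently from the paper's. The paper decomposes by location. On $B_R\times B_R$ the integrand vanishes. On $B_R^c\times B_R^c$ it separates the near-diagonal set $|x-y|\le|x|/2$, handled by the mean value theorem, from its complement, handled by $(a-b)^2\le 2a^2+2b^2$ with the $\Gamma_\ast(y)^2$ term split according to $|y|\gtrless|x|/2$. On $B_R\times B_R^c$ it needs a separate boundary-layer computation for $R<|y|<2R$, namely $\int_{B_R}|x-y|^{-N-2s}\dx\le C(|y|-R)^{-2s}$ together with $|R^{-\gamma}-|y|^{-\gamma}|\le\gamma R^{-\gamma-1}(|y|-R)$, which produces the integrable weight $(|y|-R)^{2-2s}$.

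You instead use that $\Gamma_\ast$ is globally Lipschitz with the decaying local constant $C\max\{R,|x|\}^{-(N-2s+1)}$, and you take one adaptive radius $\rho(x)=\max\{R,|x|\}/2$ on all of $\mathbb{R}^N\times\mathbb{R}^N$. This absorbs the neighborhood of $\partial B_R$ into the short-range estimate, so $B_R\times B_R^c$ needs no special treatment. You also verify $\Gamma_\ast\in L^{2^\ast_s}$, which the paper leaves implicit.

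The cost is the asymmetry of $\{|x-y|\ge\rho(x)\}$. Your initial appeal to symmetry of the kernel does not by itself handle the $\Gamma_\ast(y)^2$ term. In your first workaround, the bound $\int_{B_{2|x|}}\Gamma_\ast^2\lesssim|x|^{4s-N}$ is false when $4s<N$, since the left side is at least $\int_{B_R}\Gamma_\ast^2>0$. The correct estimate is $|x|^{-N-2s}\int_{B_{2|x|}}\Gamma_\ast^2\lesssim|x|^{-N-2s}\bigl(1+|x|^{4s-N}\bigr)$, with a logarithm if $N=4s$, and it is still integrable on $B_R^c$.

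Your symmetric radius $\max\{\rho(x),\rho(y)\}$ is the clean fix. The long-range region is then invariant under $(x,y)\mapsto(y,x)$, so the two squared terms coincide and are dominated by the integral over $\{|x-y|\ge\rho(x)\}$. The short-range region lies in $\{|x-y|<\rho(x)\}\cup\{|x-y|<\rho(y)\}$, and by symmetry of the integrand both pieces are controlled by your Lipschitz bound.
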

\begin{proof}
	Due to symmetry, we only need to verify that the contributions to the Gagliardo norm from the domains $B_R \times B_R$, $B_R \times B_R^c$, and $B_R^c \times B_R^c$ are finite. Since $\Gamma_\ast$ is constant in $B_R$, the integral over $B_R \times B_R$ vanishes. Thus, we focus our analysis on the domains $B_R \times B_R^c$ and $B_R^c \times B_R^c$. We start by analyzing the domain $\Omega_1 = (B_R^c \times B_R^c)\cap \mathcal{D},$ where $\mathcal{D}$ is the diagonal $\mathcal{D}=\{ (x,y) \in \mathbb{R}^{2N} : |x-y| \leq |x|/2 \}.$ We have $|y| \geq |x|/2$ for any $(x,y) \in \mathcal{D}.$ Also, for any $(x,y) \in \Omega _1,$ denoting $\gamma := N-2s,$ we have $|\Gamma_\ast (x) - \Gamma_\ast(y)| \leq 2 \gamma |\xi| ^{-\gamma -1} |x-y|,$ for some $\xi \in [x,y],$ the closed line segment connecting $x$ and $y.$ It is known that $\xi =x+ t (y-x), $ for some $t \in [0,1].$ In particular, $|x-\xi | \leq |x-y| \leq |x|/2,$ which leads to $|\xi | \geq |x|/2.$ Thus, for $\sigma := N+2s,$
	\begin{equation}\label{ds1}
		\frac{ (\Gamma_\ast(x) - \Gamma_\ast(y) )^2}{|x-y|^{N+2s}} \leq C |x|^{-2(\gamma +1)} |x-y| ^{2-\sigma},\quad \forall \, (x,y) \in \Omega _1.
	\end{equation}
	On the other hand, if $x \in B_R^c,$ then the change of variables for polar coordinates yields
	\begin{equation}\label{ds2}
		\int _{ B_{ \frac{|x|}{2} } (x)} | x - y | ^{2-\sigma } \dy \leq C |x|^{2(1-s)}.
	\end{equation}
	We now integrate over $\Omega_1$ and use \eqref{ds2} in the resulting integral involving \eqref{ds1}, to conclude that
	\begin{align*}
		I_1:=\iint_{\Omega_1}\frac{ (\Gamma_\ast(x) - \Gamma_\ast(y) )^2}{|x-y|^{N+2s}} \dxdy &\leq C\int _{B^c_R} |x| ^{-2(\gamma +1)} \Big(  \int _{ B_{ \frac{|x|}{2} } (x)} | x - y | ^{2-\sigma } \dy \Big) \dx \\
		&\leq C \int _{B^c_R} |x| ^{-2(\gamma +1) + 2(1-s)} \dx = C R^{-(N-2s)} <+\infty.
	\end{align*}
The estimate over the domain $\Omega _2 = (B^c_R \times B_R^c) \cap \mathcal{D}^c$ starts by looking into the inequality $(\Gamma_\ast(x) - \Gamma_\ast (y) )^2 \leq 2  ( (\Gamma_\ast(x) )^2 + (\Gamma_\ast(y) )^2 ) .$ Hence
\begin{align*}
	I_{21} := \iint _{\Omega _2} \frac{(\Gamma_\ast(x) ) ^2}{|x-y|^{N+2s}}\dxdy &\leq \int _{B^c_R} |x|^{-2 \gamma } \Big( \int _{B^c_{\frac{|x|}{2}}  (x) } |x-y|^{-\sigma }\dy  \Big) \dx\\
	&=C \int _{B_R^c}  |x|^{-2(\gamma +s) } \dx = C R^{-(N-2s)} < + \infty.
\end{align*}
We proceed by considering a fixed $x \in B_R^c$ by denoting $\mathcal{W} _x = B_R^c \cap B^c_{|x|/2}(x)$ and $\mathcal{Y}_x = B_{|x|/2}^c =\{  y \in \mathbb{R}^N : |y| \geq |x|/2\}.$ We have
\begin{align}
I_{22} &= \iint _{\Omega _2}	\frac{(\Gamma_\ast(y) ) ^2}{|x-y|^{N+2s}} \dxdy \nonumber \\
&=  \int _{B^c_R}  \Big( \int _{\mathcal{W} _x \cap  \mathcal{Y}_x} \frac{(\Gamma_\ast(y) ) ^2}{|x-y|^{N+2s}}  \dy \Big) \dx +  \int _{B^c_R}  \Big( \int _{\mathcal{W} _x \cap  \mathcal{Y}^c_x} \frac{(\Gamma_\ast(y) ) ^2}{|x-y|^{N+2s}}  \dy \Big) \dx.\label{ds3}
\end{align}
The first integral on the right-hand side of \eqref{ds3} is estimated as above,
\begin{equation*}
	\int _{B^c_R}  \Big( \int _{\mathcal{W} _x \cap  \mathcal{Y}_x} \frac{(\Gamma_\ast(y) ) ^2}{|x-y|^{N+2s}}  \dy \Big) \dx \leq \int _{B^c_R} |x|^{-2 \gamma } \Big( \int _{B^c_{\frac{|x|}{2}}  (x) } |x-y|^{-\sigma }\dy  \Big) \dx < + \infty.
\end{equation*}
In turn, the second integral in \eqref{ds3} is estimated in the following way
\begin{align}
	\int _{B^c_R}  \Big( \int _{\mathcal{W} _x \cap  \mathcal{Y}^c_x} \frac{(\Gamma_\ast(y) ) ^2}{|x-y|^{N+2s}}  \dy \Big) \dx &\leq C \int _{B_R^c} |x|^{-\sigma } \Big( \int _{B^c_R \cap \mathcal{Y}^c_x} |y|^{-2 \gamma } \dy  \Big) \dx \nonumber\\
	&=C \int _{B_R^c} |x|^{-\sigma } \Big(  \int _R ^{|x|/2}  \varrho ^{-2\gamma +N-1} \drho \Big) \dx.\label{ds4}
\end{align}
The right-hand side of \eqref{ds4} is finite because
\begin{equation*}
	\int _{B^c_R} |x|^{-\sigma} \dx = C \int _R ^\infty \varrho ^{-2s-1} \drho < +\infty\quad \text{and}\quad\int _{B^c_R} |x|^{-\sigma -2 \gamma + N} \dx = \int _R ^\infty \varrho^{-(N-2s) -1} \drho < +\infty.
\end{equation*}
Combining those estimates
\begin{equation*}
	\int _{B_R^c} \int _{B_R^c}\frac{ (\Gamma_\ast(x) - \Gamma_\ast (y) )^2}{|x-y|^{N+2s}}  \dxdy = I_1 + 2(I_{21} + I_{22}) < +\infty.
\end{equation*}
Next, we take into account the set $E = B_{2R}^c = \{  y \in \mathbb{R}^N : |y| \geq 2R   \}.$ The Gagliardo norm estimate over $B_R\times B_R^c$ is made by writing $B_R \times B_R^c = \big(   B_R\times (E \cap B_R ^c )\big)\cup \big(  B_R\times (E^c \cap B_R^c) \big).$ In $B_R\times (E \cap B_R ^c ),$ the term $(\Gamma_\ast(x) - \Gamma_\ast(y) )^2$ is uniformly bounded below and above by positive constants. Moreover, 
\begin{equation*}
	|x-y| >|y|-R\geq |y|- (|y|/2)  = |y|/2 ,\quad \forall \, (x,y) \in B_R\times E .
\end{equation*}
Thus
\begin{align}
	\int_{E \cap B_R ^c } (\Gamma_\ast(x) - \Gamma_\ast(y) )^2  \Big( \int _{B_R} |x-y|^{- \sigma } \dx \Big)  \dy &\leq C \int_{E \cap B_R ^c } \Big( \int _{B_R} |y|^{- \sigma } \dx \Big) \dy \nonumber \\
	& = C \int _{B^c_{2R}} |y|^{-\sigma } \dy =C \int _{2R} ^\infty \varrho ^{-2s -1} \drho < +\infty .\label{ds5}
\end{align}
Nevertheless, for $x\in B_R$ and $y \in E^c \cap \overline{B}_R^c,$ we have $|x-y| > |y| -R>0$ and $\Gamma_\ast(x) = R^{-\gamma }.$ In particular, $B_R \subset \overline{B}_{|y|-R}^c (y) := \{ x \in \mathbb{R}^N : |x-y| >|y|-R \}.$ Consequently, 
\begin{align*}
	\int_{E^c \cap B_R ^c } (\Gamma_\ast(x) - \Gamma_\ast(y) )^2 & \Big( \int _{B_R} |x-y|^{- \sigma } \dx \Big)  \dy \nonumber \\
	& \leq \int_{E^c \cap B_R ^c } (R^{-\gamma } - \Gamma_\ast(y) )^2  \Big( \int _{ \overline{B}_{|y|-R}^c (y) } |x-y|^{- \sigma }\dx \Big) \dy  \\
	&=C \int_{B_{2R} \cap B_R ^c } (R^{-\gamma } - \Gamma_\ast (y) )^2  (|y| - R )^{-2s}  \dy.
\end{align*}
By the Mean Value Theorem applied to the function $\zeta(t) = t^{-\gamma},$ for $t \in [R,2R],$ we obtain $|R^{-\gamma} - |y|^{-\gamma }| \leq \gamma R^{-\gamma - 1} |R - |y||,$ for $y \in B_{2R} \cap B_R ^c.$ Therefore,
\begin{equation}\label{ds7}
	\int_{B_{2R} \cap B_R ^c } (R^{-\gamma } - \Gamma_\ast (y) )^2  (|y| - R )^{-2s}  \dy \leq C \int _{B_{2R} \cap B_R ^c} (|y|-R)^{2(1-s)}\dy <+\infty.
\end{equation}
Summing up, estimates \eqref{ds5}--\eqref{ds7} yield
\begin{equation*}
\int _{B_R} \int _{B_R^c}\frac{ (\Gamma_\ast(x) - \Gamma_\ast (y) )^2}{|x-y|^{N+2s}}  \dxdy < + \infty. \qedhere
\end{equation*}
\end{proof}
We now proceed to establish a general approximation result for a suitable sequence of cutoff functions in $D^{s,2}(\mathbb{R}^N).$ As a preliminary step, we state and prove a standard technical result showing that translations are continuous in $D^{s,2}(\mathbb{R}^N),$ thus making our exposition self-contained.
\begin{lemma}\label{l_trans}
	Let $u \in D^{s,2}(\mathbb{R}^N).$ Then $\|\tau_z u - u\|_{D^{s,2}(\mathbb{R}^N)} \to 0$ as $|z| \to 0,$ where $\tau_z u(x) = u(x-z).$
\end{lemma}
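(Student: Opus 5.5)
We use the standard density argument: first check the claim on $C_0^\infty(\mathbb{R}^N)$, then transfer it to all of $D^{s,2}(\mathbb{R}^N)$ by a $3\varepsilon$ argument based on the isometric nature of translations. The first observation is that $\tau_z$ preserves the Gagliardo seminorm. Indeed, the change of variables $(x,y)\mapsto(x+z,y+z)$ has unit Jacobian and leaves $|x-y|$ unchanged, so
\begin{equation*}
\|\tau_z u\|_{D^{s,2}(\mathbb{R}^N)}=\|u\|_{D^{s,2}(\mathbb{R}^N)}.
\end{equation*}
Also $\tau_z u\in L^{2^\ast_s}(\mathbb{R}^N)$, so $\tau_z$ is a linear isometry of $D^{s,2}(\mathbb{R}^N)$.

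Since $D^{s,2}(\mathbb{R}^N)$ is the completion of $C_0^\infty(\mathbb{R}^N)$, we may fix $\varepsilon>0$ and choose $\varphi\in C_0^\infty(\mathbb{R}^N)$ with $\|u-\varphi\|_{D^{s,2}}<\varepsilon$. The triangle inequality and the isometry then give
\begin{equation*}
\|\tau_z u-u\|_{D^{s,2}}\le \|\tau_z(u-\varphi)\|_{D^{s,2}}+\|\tau_z\varphi-\varphi\|_{D^{s,2}}+\|\varphi-u\|_{D^{s,2}}< 2\varepsilon+\|\tau_z\varphi-\varphi\|_{D^{s,2}}.
\end{equation*}
It therefore suffices to prove the claim for $\varphi$.

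For $\varphi\in C_0^\infty$, put $\psi_z=\tau_z\varphi-\varphi$ and assume $|z|\le1$. Then $\psi_z$ is supported in a fixed ball $B_{\rho}$. Moreover $\|\psi_z\|_\infty\le |z|\,\|\nabla\varphi\|_\infty$, and $\|\nabla\psi_z\|_\infty\le |z|\,\|D^2\varphi\|_\infty$ by the mean value theorem. We bound the seminorm by splitting the double integral according to whether $|x-y|<1$ or $|x-y|\ge1$.

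On the region $|x-y|<1$, at least one of the two points lies in $B_\rho$, so both lie in $B_{\rho+1}$. There $(\psi_z(x)-\psi_z(y))^2\le |z|^2\|D^2\varphi\|_\infty^2|x-y|^2$, and $\int_{|h|<1}|h|^{2-N-2s}\,dh<\infty$ because $s<1$. This part is therefore at most $C|z|^2$.

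On the region $|x-y|\ge1$, we use $(\psi_z(x)-\psi_z(y))^2\le 2\psi_z(x)^2+2\psi_z(y)^2$, symmetry, and $\int_{|h|\ge1}|h|^{-N-2s}\,dh<\infty$. This bounds the part by $C\|\psi_z\|_2^2\le C|z|^2|B_\rho|\,\|\nabla\varphi\|_\infty^2$.

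Hence $\|\tau_z\varphi-\varphi\|_{D^{s,2}}\le C_\varphi|z|\to0$. Combined with the previous display, $\limsup_{|z|\to0}\|\tau_z u-u\|_{D^{s,2}}\le2\varepsilon$ for every $\varepsilon>0$, which proves the lemma.

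The only points that need care are that the isometry holds on all of $D^{s,2}$ (not only on smooth functions) and that the constants for the smooth function are uniform in $|z|\le1$. Both are routine, so there is no serious obstacle.
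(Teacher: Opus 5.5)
Your proof is correct. The reduction to $C_0^\infty(\mathbb{R}^N)$ is the same as in the paper: translation invariance of the Gagliardo seminorm plus a $3\varepsilon$ density argument (the paper just says the general case ``follows by density'').

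The genuine difference is in the smooth case. The paper argues softly. Since $\|\tau_{z_n}\phi\|_{D^{s,2}}=\|\phi\|_{D^{s,2}}$, a subsequence converges weakly. The embedding $D^{s,2}(\mathbb{R}^N)\hookrightarrow L^{2^\ast_s}(\mathbb{R}^N)$ together with pointwise convergence identifies the weak limit as $\phi$. Weak convergence plus equality of norms in a Hilbert space then gives strong convergence.

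You instead estimate directly. You split the double integral at $|x-y|=1$ and use the compact support and $C^2$ bounds of $\varphi$, which yields the explicit rate $\|\tau_z\varphi-\varphi\|_{D^{s,2}}\le C_\varphi|z|$.

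Your route is more elementary and quantitative. It needs neither weak compactness nor the Sobolev embedding, and it avoids the subsequence-of-subsequences step the paper leaves implicit. It also carries over verbatim to any kernel $K_s$ with $\min\{|h|^2,1\}K_s(h)\in L^1$, i.e.\ under \ref{K_tres}.

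The paper's argument is shorter and uses only continuity of $\phi$. Its soft structure is also what the paper reuses for the dilations $u_{n,\sigma}$ in Proposition \ref{p_dense}, where the norm is not preserved but only converges to the right value.
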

\begin{proof}
	Let $(z_n) \subset \mathbb{R}^N$ be an arbitrary sequence such that $z_n \rightarrow 0.$ Consider $\phi \in C^\infty_0(\mathbb{R}^N).$ We have $\|\tau_{z_n}\phi\|_{D^{s,2}(\mathbb{R}^N)} = \|\phi\|_{D^{s,2}(\mathbb{R}^N)}.$ Moreover, the continuity of $\phi$ implies that $\tau_{z_n}\phi(x) \rightarrow \phi(x)$ for all $x \in \mathbb{R}^N.$ Since the sequence $(\tau_{z_n}\phi)$ is bounded, it converges weakly (up to a subsequence) to some $w \in D^{s,2}(\mathbb{R}^N).$ Due to the continuous embedding $D^{s,2}(\mathbb{R}^N) \hookrightarrow L^{2^\ast_s}(\mathbb{R}^N)$, this weak convergence also holds in $L^{2^\ast_s}(\mathbb{R}^N)$. This fact, combined with the pointwise convergence, identifies the weak limit as $w = \phi.$ The weak convergence $\tau_{z_n}\phi \rightharpoonup \phi$ in $D^{s,2}(\mathbb{R}^N)$ combined with $\|\tau_{z_n}\phi\|_{D^{s,2}(\mathbb{R}^N)} = \|\phi\|_{D^{s,2}(\mathbb{R}^N)}$ implies $\|\tau_{z_n}\phi - \phi\|_{D^{s,2}(\mathbb{R}^N)} \to 0.$ The result for a general $u$ follows by the density of $C^\infty_0(\mathbb{R}^N)$ in $D^{s,2}(\mathbb{R}^N).$
\end{proof}
Our result complements the approximation lemma found in \cite[Lemma 5.3]{dpv} and plays a crucial role in our argument.
\begin{lemma}\label{l_cut}
	Let $u \in D^{s,2}(\mathbb{R}^N)$ and let $\eta \in C^\infty_0(\mathbb{R}^N)$ be a cutoff function such that $0 \leq \eta \leq 1$, $\eta = 1$ in $B_1,$ and $\eta = 0$ in $B^c_2.$ Define $u_n(x) := \eta(x/n)u(x).$ Then $u_n \rightarrow u$ in $D^{s,2}(\mathbb{R}^N).$
\end{lemma}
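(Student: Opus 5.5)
We prove $\|u_n-u\|_{D^{s,2}(\mathbb{R}^N)}\to 0$ by writing $\eta_n(x):=\eta(x/n)$ and
\begin{equation*}
u_n(x)-u(x)-(u_n(y)-u(y)) = (\eta_n(x)-1)(u(x)-u(y)) + u(y)(\eta_n(x)-\eta_n(y)),
\end{equation*}
so that, up to a factor $2$, the Gagliardo seminorm of $u_n-u$ is bounded by
\begin{equation*}
T_1^n:=\int_{\mathbb{R}^N}\int_{\mathbb{R}^N}(1-\eta_n(x))^2\frac{(u(x)-u(y))^2}{|x-y|^{N+2s}}\dxdy, \qquad T_2^n:=\int_{\mathbb{R}^N}|u(y)|^2\Big(\int_{\mathbb{R}^N}\frac{(\eta_n(x)-\eta_n(y))^2}{|x-y|^{N+2s}}\dx\Big)\dy .
\end{equation*}

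\textbf{The term $T_1^n$.} The integrand is dominated by the integrable function $(u(x)-u(y))^2|x-y|^{-N-2s}$. Since $1-\eta_n(x)=0$ for $|x|<n$, it tends to $0$ pointwise, so dominated convergence gives $T_1^n\to 0$.

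\textbf{The term $T_2^n$.} This is the main obstacle. We only know $u\in L^{2^\ast_s}$, not $u\in L^2$, so the standard bound $\int(\eta_n(x)-\eta_n(y))^2|x-y|^{-N-2s}\dx\le C n^{-2s}$ is not enough by itself. We use the sharper, scale-invariant pointwise bound
\begin{equation*}
\Psi_n(y):=\int_{\mathbb{R}^N}\frac{(\eta_n(x)-\eta_n(y))^2}{|x-y|^{N+2s}}\dx = n^{-2s}\Psi_1(y/n), \qquad \Psi_1(z)\le C\min\{1,|z|^{-N-2s}\}.
\end{equation*}
The scaling identity follows from the change of variables $x=nx'$. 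The bound on $\Psi_1$ is proved in two regimes. Near the support ($|z|\le 4$), split the $x$-integral into $|x-z|<1$, using $|\nabla\eta|$, and $|x-z|\ge 1$, using $|\eta|\le1$. Far away ($|z|>4$), $\eta(z)=0$, so only $x\in B_2$ contributes, and there $|x-z|\ge|z|/2$.

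Applying Hölder with exponents $2^\ast_s/2$ and $N/(2s)$ gives
\begin{equation*}
T_2^n\le \|u\|_{2^\ast_s}^2\,\|\Psi_n\|_{N/(2s)}, \qquad \|\Psi_n\|_{N/(2s)}=n^{-2s}\, n^{2s}\|\Psi_1\|_{N/(2s)}=\|\Psi_1\|_{N/(2s)}<\infty .
\end{equation*}
The norm is finite because $|z|^{-(N+2s)N/(2s)}$ is integrable at infinity. This shows $T_2^n$ is bounded uniformly in $n$ but does not yet give $T_2^n\to0$. For that, fix $\varepsilon>0$ and choose $\rho$ large enough that $\|u\|_{L^{2^\ast_s}(B_\rho^c)}<\varepsilon$. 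Then:
\begin{itemize}
\item On $B_\rho^c$, the same Hölder estimate gives a contribution of at most $\varepsilon^2\|\Psi_1\|_{N/(2s)}$.
\item On $B_\rho$, for $n\ge\rho$ we have $|y/n|\le1$, so $\Psi_n\le Cn^{-2s}$. The contribution is then at most $Cn^{-2s}\|u\|_{L^2(B_\rho)}^2\le Cn^{-2s}\rho^{2s}\|u\|_{2^\ast_s}^2$, using Hölder on the bounded set $B_\rho$. This tends to $0$ as $n\to\infty$.
\end{itemize}
Hence $\limsup_n T_2^n\le C\varepsilon^2$ for every $\varepsilon>0$, so $T_2^n\to0$, and together with $T_1^n\to0$ the lemma follows.

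An alternative route is to prove the result for $u\in C^\infty_0$, where it is trivial, and then use density. That would require the uniform bound $\|\eta_n v\|_{D^{s,2}}\le C\|v\|_{D^{s,2}}$, which comes from exactly the uniform estimate on $T_2^n$ above, so the Hölder–scaling step is the heart of the proof either way.
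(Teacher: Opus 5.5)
Your proof is correct, and for the key term it takes a genuinely different route from the paper. The splitting into $T_1^n$ and $T_2^n$ and the dominated-convergence argument for $T_1^n$ are exactly the paper's treatment of $J_1(n)$. The difference lies in $T_2^n$.

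\textbf{The paper's route.} It decomposes the support of $\eta_n(x)-\eta_n(y)$ into the four pieces $A_n\times\mathbb{R}^N$, $\mathbb{R}^N\times A_n$, $B_{2n}^c\times B_n$ and $B_n\times B_{2n}^c$. On the annulus pieces it uses only the crude uniform bound $J_3(y,n)\le C_N n^{-2s}$ (your $\Psi_n\le Cn^{-2s}$), together with H\"older on $A_n$, where $n^{-2s}|A_n|^{2s/N}$ is scale invariant. On the far-apart pieces it uses $u(y)^2\le 2u(x)^2+2(u(x)-u(y))^2$, the bound $\int_{B_n}|x-y|^{-N-2s}\,dy\le C|x|^{-2s}$ for $|x|\ge 2n$, and the fractional Hardy inequality $\int_{\mathbb{R}^N}u^2|x|^{-2s}\,dx<\infty$.

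\textbf{Your route.} You prove the decay $\Psi_1(z)\le C\min\{1,|z|^{-N-2s}\}$. This makes $\|\Psi_n\|_{N/(2s)}$ independent of $n$, so a single H\"older inequality against $\|u\|_{2^\ast_s}^2$, plus a core/tail splitting, handles all of $\mathbb{R}^N$ at once.

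\textbf{What each buys.} Your argument needs no external Hardy inequality, only $u\in L^{2^\ast_s}$, and no case analysis over supports. It also never meets the piece where $x\in A_n$ and $y$ is arbitrary. The paper dispatches that piece ``by symmetry'', even though the weight $u(y)^2$ is not symmetric in $(x,y)$. It can be repaired, e.g.\ by your core/tail argument or by Hardy plus dominated convergence, but it is not automatic. As you note, your estimate also gives the uniform multiplier bound $\|\eta_n v\|_{D^{s,2}(\mathbb{R}^N)}\le C\|v\|_{D^{s,2}(\mathbb{R}^N)}$ as a by-product. The paper's route keeps its pointwise kernel estimates minimal, a uniform bound on the annulus and a Hardy-weight bound far away, at the cost of importing the fractional Hardy inequality.
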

\begin{proof}
Consider $v_n (x) = u(x) -u_n(x) = (1 - \eta (x/n)) u(x).$ We are going to prove that $(v_n) \subset D^{s,2}(\mathbb{R}^N)$	and $v_n \rightarrow 0$ in $D^{s,2}(\mathbb{R}^N). $ We write 
\begin{equation*}
	v_n(x) - v_n (y) = (u(x) -u(y)) (1- \eta _n (x)) + u(y) (\eta_n(y)  - \eta_n(x)),
\end{equation*}
where $\eta_n  = \eta (\cdot /n).$ Therefore, we have the following estimate
\begin{align}
	\| v_n \|_{D^{s,2}(\mathbb{R}^N)}^2 \leq 2 \int _{\mathbb{R}^N }\int _{\mathbb{R}^N }\frac{(u(x) -u(y))^2 (1- \eta _n (x))^2}{|x-y|^{N+2s}}  \dxdy \nonumber \\+ 2 \int _{\mathbb{R}^N }\int _{\mathbb{R}^N }\frac{ (u(y))^2 (\eta_n(x)  - \eta_n(y))^2 }{|x-y|^{N+2s}}  \dxdy \label{leminha1}
\end{align}
By the Lebesgue convergence theorem, the first integral (denoted by $J_1(n)$) in the right-hand side of \eqref{leminha1} goes to zero, as $n \rightarrow \infty.$ Denoting the second integral by $J_2(n)$, we can write it as
\begin{equation}\label{jota2}
	J_2(n) = \int _{\mathbb{R}^N} (u(y))^2 \Big( \int _{\mathbb{R}^N} \frac{( \eta_n(x) - \eta_n(y) )^2}{|x-y|^{N+2s}}   \dx  \Big) \dy.
\end{equation}
Next, we analyze the second integral of \eqref{jota2}. By a change of variables,
\begin{equation*}
	J_3(y,n) := \int _{\mathbb{R}^N} \frac{( \eta_n(x) - \eta_n(y) )^2}{|x-y|^{N+2s}}   \dx=n^{-2s} \int _{\mathbb{R}^N }   \frac{( \eta (z + (y/n)) -\eta (y/n) )^2  }{|z|^{N+2s}}\dz.
\end{equation*}
Nevertheless,
\begin{equation*}
	\int _{B_1 }   \frac{( \eta (z + (y/n)) -\eta (y/n) )^2  }{|z|^{N+2s}}\dz \leq \| \nabla \eta \|_\infty^2 \int _{B_1} |z|^{-(N+2s) + 2} \dz = C_N \frac{\| \nabla \eta \|_\infty ^2}{2(1-s)}.
\end{equation*}
Furthermore,
\begin{equation*}
	\int _{B^c_1 }   \frac{( \eta (z + (y/n)) -\eta (y/n) )^2  }{|z|^{N+2s}}\dz \leq 2 \| \eta \|_\infty ^2 \int _{B^c_1 }  |z|^{-(N+2s)}  \dz = C_N \frac{\| \eta \|_\infty ^2}{2s}.
\end{equation*}
Summing up, $J_3(y,n) \leq C_N n^{-2s},$ for some $C_N>0$ independent of $y$ and $n.$ Now, observe that the difference function $\mu(x,y):=\eta_n(x) - \eta_n(y)$ is zero whenever $(x,y)$ belongs to $(B_n \times B_n)$ or $B^c_{2n} \times B^c_{2n}.$ Introducing the notation $A_n = \{ z \in \mathbb{R}^N : n \leq |z| < 2n \},$ we have
\begin{align}
	\supp(\mu )  &\subset \left(  (B_n \times B_n) \cup (B^c_{2n} \times B^c_{2n} ) \right) ^c \nonumber\\
	&= (A_n \times \mathbb{R}^N ) \cup (\mathbb{R}^N  \times A_n) \cup  (B^c_{2n} \times B_n) \cup (B_n \times B^c_{2n}).\label{setao}
\end{align}
With the aid of the uniform bound for $J_3$, we estimate the contribution to $J_2(n)$ from each subset on the right-hand side of \eqref{setao}. For the integral over $A_n \times \mathbb{R}^N,$ we apply H\"{o}lder's inequality as follows
\begin{align*}
	\int_{A_n} \int_{\mathbb{R}^N} \frac{ (u(y))^2 (\eta_n(x)  - \eta_n(y))^2 }{|x-y|^{N+2s}}  \dxdy &\leq \int _{A_n} (u(y))^2 J_3(y,n) \dy \\
	&\leq C_N n^{-2s} \int _{A_n} u(y)^2 \dy \\
	&\leq C_N n^{-2s}  |A_n|^{2s/N} \| u \|^2_{L^{2^\ast _s}(A_n)} = C_N \| u \|^2_{L^{2^\ast _s}(A_n)}.
\end{align*}
Since $u \in L^{2^\ast_s}(\mathbb{R}^N)$, this term vanishes as $n \to \infty$. By symmetry, the integral over $\mathbb{R}^N \times A_n$ also goes to zero as $n \rightarrow \infty.$ The next step is analyzing the integral over $B^c_{2n} \times B_n.$ In this case, $(\eta_n(x) - \eta_n(y))^2 = 1$. Moreover, we make use of the elementary inequality $(u(y))^2 \leq 2 (u(x)) ^2 + 2(u(x) - u (y))^2$ to obtain
\begin{multline}\label{gagli}
	\iint_{B^c_{2n} \times B_n} \frac{ (u(y))^2 (\eta_n(x)  - \eta_n(y))^2 }{|x-y|^{N+2s}}  \dx\dy \\
	\leq 2 \iint_{B^c_{2n} \times B_n} \frac{(u(x)-u(y))^2}{|x-y|^{N+2s}} \dx\dy + 2 \int_{B_{2n}^c} (u(x))^2 \Big( \int_{B_n} \frac{1}{|x-y|^{N+2s}} \dy \Big) \dx.
\end{multline}
The first integral on the right-hand side of \eqref{gagli} goes to zero as $n \to \infty,$ because the integrand is integrable over $\mathbb{R}^N \times \mathbb{R}^N$, since $u \in D^{s,2}(\mathbb{R}^N).$ For the second integral, since $x \in B_{2n}^c$ and $y \in B_n$, we have $|x-y| \geq |x|/2$. Thus, $|x-y|^{- (N+2s)}\leq C_N n^{-N} |x|^{-2s}$ and the inner integral is bounded by $C_N|x|^{-2s}$, yielding
\begin{equation}\label{hardy}
	\int_{B_{2n}^c} (u(x))^2 \Big( \int_{B_n} \frac{1}{|x-y|^{N+2s}} \dy \Big) \dx \leq C_N\int_{B_{2n}^c} \frac{u(x)^2}{|x|^{2s}} \dx.
\end{equation}
By the fractional Hardy inequality (see \cite{zbMATH01374984}), $u \in D^{s,2}(\mathbb{R}^N)$ implies $\int_{\mathbb{R}^N} u^2 |x|^{-2s} \dx < +\infty$. Hence, the right-hand side of \eqref{hardy} converges to zero as $n \rightarrow \infty.$ The remaining integral is estimated using a similar approach. When $(x,y) \in B_n \times B^c_{2n}$, we also have $(\eta_n(x) - \eta_n(y))^2 = 1$. In this case, we can write
\begin{equation*}
	\int_{B_n } \int_{B^c_{2n}}  \frac{ (u(y))^2 (\eta_n(x)  - \eta_n(y))^2 }{|x-y|^{N+2s}}  \dx\dy  = \int _{B^c_{2n} } (u(y))^2 \Big( \int _{B_n} |x-y|^{-(N+2s)} \dx \Big) \dy,
\end{equation*}
which corresponds exactly to the left-hand side of \eqref{hardy} with the variables interchanged. Thus, the integral over $B_n \times B^c_{2n}$ also vanishes as $n \rightarrow \infty.$ Combining all these estimates, we conclude that $J_2(n) \to 0$ as $n \to \infty$, which completes the proof.
\end{proof}
The preceding technical results allow us to establish the following density property. It guarantees that any function in $D^{s,2}(\mathbb{R}^N)$ vanishing in a ball can be approximated in the $D^{s,2}(\mathbb{R}^N)$ norm by test functions that also vanish in the same ball.
\begin{proposition}\label{p_dense}
	Suppose $u \in D^{s,2}(\mathbb{R}^N)$ satisfies $u = 0$ a.e. in $B_R.$ Then, there exists a sequence $(\phi_n) \subset C^\infty_0(\mathbb{R}^N \setminus \overline{B}_R)$ such that $\phi_n \to u$ in $D^{s,2}(\mathbb{R}^N),$ with the additional property that $\phi_n \geq 0,$ whenever $u \geq 0$ a.e. in $\mathbb{R}^N$.
\end{proposition}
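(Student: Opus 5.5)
The proof combines three steps: truncation at infinity (Lemma \ref{l_cut}), a dilation that pushes the support strictly away from $\overline{B}_R$, and mollification. Positivity is preserved at each step.

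\textbf{Step 1 (compact support).} Set $u_n(x)=\eta(x/n)u(x)$ as in Lemma \ref{l_cut}. Then $u_n\to u$ in $D^{s,2}(\mathbb{R}^N)$. Each $u_n$ vanishes a.e. in $B_R$, has support in $\overline{B}_{2n}$, and satisfies $u_n\ge 0$ whenever $u\ge0$. By a diagonal argument, it suffices to approximate a fixed compactly supported $u\in D^{s,2}(\mathbb{R}^N)$ with $u=0$ a.e. in $B_R$.

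\textbf{Step 2 (moving the support off $\partial B_R$).} For $t>1$ define $u^t(x)=u(x/t)$. Then $u^t=0$ a.e. in $B_{tR}$, and $u^t$ is still compactly supported and nonnegative when $u$ is. A change of variables gives
\[
\|u^t\|_{D^{s,2}(\mathbb{R}^N)}^2=t^{N-2s}\|u\|_{D^{s,2}(\mathbb{R}^N)}^2 .
\]
I would show that $u^t\to u$ in $D^{s,2}(\mathbb{R}^N)$ as $t\to1^+$ in the same way Lemma \ref{l_trans} is proved. For $\phi\in C_0^\infty(\mathbb{R}^N)$, the norms of $\phi^t$ converge to $\|\phi\|$ and $\phi^t\to\phi$ pointwise. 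Weak compactness together with the embedding into $L^{2^\ast_s}$ identifies the weak limit as $\phi$, and convergence of norms upgrades weak convergence to strong convergence. For general $u$, the dilation operator has norm $t^{(N-2s)/2}$, which is uniformly bounded for $t\in(1,2]$. An $\varepsilon/3$ density argument then gives $u^t\to u$ for every $u\in D^{s,2}(\mathbb{R}^N)$.

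\textbf{Step 3 (mollification).} Take a standard mollifier $\rho_\varepsilon\ge0$ supported in $B_\varepsilon$ and set $\phi=\rho_\varepsilon*u^t$ with $\varepsilon<(t-1)R$. Then $\phi\in C_0^\infty(\mathbb{R}^N\setminus\overline{B}_R)$, because $u^t$ vanishes in $B_{tR}$ and is compactly supported. Also $\phi\ge0$ whenever $u^t\ge0$, since $\rho_\varepsilon\ge0$. To show $\rho_\varepsilon*w\to w$ in $D^{s,2}(\mathbb{R}^N)$, write
\[
\rho_\varepsilon*w-w=\int\rho_\varepsilon(z)(\tau_zw-w)\,dz .
\]
Minkowski's integral inequality then gives
\[
\|\rho_\varepsilon*w-w\|_{D^{s,2}(\mathbb{R}^N)}\le\sup_{|z|\le\varepsilon}\|\tau_zw-w\|_{D^{s,2}(\mathbb{R}^N)},
\]
and the right-hand side tends to $0$ by Lemma \ref{l_trans}. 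This requires viewing the Gagliardo seminorm as an $L^2$ norm of $(x,y)\mapsto (w(x)-w(y))|x-y|^{-(N+2s)/2}$, which commutes with translations.

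To finish, choose $n$ large, then $t$ close to $1$, then $\varepsilon$ small, and apply a diagonal argument to obtain the sequence $(\phi_n)$.

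\textbf{Main obstacle.} The key difficulty is Step 2, the continuity of dilations in $D^{s,2}(\mathbb{R}^N)$. Without it, mollification alone could not keep the support away from $\overline{B}_R$, because $u$ may be nonzero right up to $\partial B_R$. I expect the weak-limit-plus-norm argument, modeled on Lemma \ref{l_trans}, to handle it.
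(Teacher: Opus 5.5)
Your proposal is correct and follows the paper's proof essentially step by step: truncation at infinity via Lemma~\ref{l_cut}, a dilation $u(\cdot/t)$ with $t\to1^+$ justified by the weak-limit-plus-norm argument of Lemma~\ref{l_trans}, and mollification controlled by Minkowski's integral inequality together with Lemma~\ref{l_trans}. You also make explicit that the dilation operators are uniformly bounded for $t\in(1,2]$, which is what lets the density step go through; the paper leaves this implicit.
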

\begin{proof}
	Let $\eta \in C^\infty_0(\mathbb{R}^N)$ and $u_n = \eta(\cdot/n) u$ as in Lemma \ref{l_cut}. For all sufficiently large $n$, it follows that $\supp(u_n) \subset B_{2n} \setminus B_R.$ Moreover, Lemma \ref{l_cut} also guarantees that $u_n \rightarrow u$ in $D^{s,2}(\mathbb{R} ^N).$ Hence, for a given $\varepsilon >0,$ there is $n_0 \in \mathbb{N}$ such that
	\begin{equation}\label{epis1}
		\| u_n - u \| _{D^{s,2}(\mathbb{R} ^N)} < \varepsilon/3,\quad  \text{for }n \geq n_0.
	\end{equation}
	 Next, for $\sigma > 1$ and $n$ as above, we define $u_{n,\sigma } = u_n (\cdot / \sigma ).$ Since $\| u_{n,\sigma } \|_{D^{s,2}(\mathbb{R}^N)}^2 = \sigma ^{N-2s} \| u_{n} \|_{D^{s,2}(\mathbb{R}^N)}^2, $ an argument similar to that used in Lemma \ref{l_trans} shows that $u_{n, \sigma } \rightarrow u_n$ in $D^{s,2}(\mathbb{R}^N),$ as $\sigma \rightarrow 1^+.$ In particular, there is $\sigma _0 = \sigma_0(n) >1$ such that 
	 \begin{equation}\label{epis2}
	 	\| u_{n,\sigma } - u_n \| _{D^{s,2}(\mathbb{R} ^N)} < \varepsilon/3,\quad  \text{whenever }1<\sigma  \leq \sigma_0.
	 \end{equation}
	 Next, we consider the standard mollifier sequence given by $\varphi _m (x) = (1 / (1/m)^N )\varphi (x/ (1/m)),$ where $\varphi(x) = C \exp\{ 1/(|x|^{2} -1 )  \},$ if $|x|<1,$ and $\varphi(x) = 0,$ if $|x| \geq 1,$ with $C>0$ being a suitable normalizing constant such that $\int _{\mathbb{R}^N} \varphi _m\dz=1$. We claim that the mollified sequence $u_{n,\sigma,m} := u_{n,\sigma } \ast \varphi _m \in C^\infty (\mathbb{R}^N)$ provides the desired approximation. To prove this, we start by pointing out that $u_{n,\sigma,m}$ has compact support with $\supp{ (u_{n,\sigma,m} ) } \subset \overline{(B_{2n\sigma} \setminus B_{\sigma R} )  + B_{1/m} }.$ Using this fact, one can check that for $m$ sufficiently large with $(1/m) < (\sigma -1)R,$ we have: if $|x| < \sigma R - (1/m),$ then $u_{n,\sigma,m} (x) = 0.$ Equivalently, $\supp{ (u_{n,\sigma,m} ) } \subset B^c_{\sigma R - (1/m)}$ and $B_R \varsubsetneq B_{\sigma R - (1/m)},$ for $m$ large enough. Consequently, fixing this large $m,$ one has $u_{n,\sigma,m} \in  C^\infty_0(\mathbb{R}^N \setminus \overline{B}_R) .$ We proceed by writing
	\begin{equation*}
		u_{n,\sigma,m} (x) - u_{n,\sigma }(x) = \int _{B_{1/m}} \big(  \tau _{z} u_{n,\sigma }(x) - u_{n,\sigma }(x) \big) \varphi _{m}(z)\dz.
	\end{equation*}
	By denoting 
	\begin{equation*}
	u_\ast(x,y,z) = (\tau _{z} u_{n,\sigma }(x) - u_{n,\sigma }(x)) - (\tau _{z} u_{n,\sigma }(y) - u_{n,\sigma }(y)),
	\end{equation*}
	one can write
	\begin{equation*}
		\| u_{n,\sigma,m}    - u_{n,\sigma}\|_{D^{s,2}(\mathbb{R}^N)} = \left(  \iint _{\mathbb{R}^{2N}} \Big( \int _{B_{1/m}}  u_\ast (x,y,z) \varphi _{m}(z) \dz  \Big) ^2 |x-y|^{-(N+2s)}\dxdy \right) ^{1/2}.
	\end{equation*}
At this point, since the measure $\mu_s (W) = \iint _W |x-y|^{-(N+2s)} \dxdy$ is $\sigma$-finite, we can apply Minkowski's inequality for integrals (see \cite[Appendix A]{zbMATH03441026}) to get the following estimate,
\begin{align*}
	\| u_{n,\sigma,m}    - u_{n,\sigma}\|_{D^{s,2}(\mathbb{R}^N)} &\leq \int _{B_{1/m}} \Big( \iint _{\mathbb{R}^{2N}} \big(u_\ast(x,y,z) \varphi _m (z)  \big)^2  |x-y|^{-(N+2s)}  \dxdy \Big) ^{1/2}  \dz\\
	&=\int _{B_{1/m} }  \big(\|\tau _z u_{n,\sigma } - u_{n,\sigma } \|_{D^{s,2}(\mathbb{R}^N)} \big) \varphi_m (z) \dz .
\end{align*}
In the next step we apply Lemma \ref{l_trans}. There is $\delta = \delta (n,\sigma) >0$ such that $\|\tau_z u_{n,\sigma } - u_{n,\sigma }\|_{D^{s,2}(\mathbb{R}^N)} < \varepsilon/3 ,$ provided that $|z| < \delta.$ Taking a larger $m_0 = m_0 (n,\sigma) $ (if necessary) with $1/m_0 < \delta$ yields
\begin{equation}\label{epis3}
	\| u_{n,\sigma,m_0}    - u_{n,\sigma}\|_{D^{s,2}(\mathbb{R}^N)} \leq \int _{B_{1/m_0} }  \big(\|\tau _z u_{n,\sigma } - u_{n,\sigma } \|_{D^{s,2}(\mathbb{R}^N)} \big) \varphi_{m_0} (z) \dz < \varepsilon /3.
\end{equation}
Therefore, the choice of a suitably large $m_0 = m_0(n,\sigma)$ leads directly to \eqref{epis3}. Let us fix $n_0$ and $\sigma_0$ as in \eqref{epis1} and \eqref{epis2}, respectively, and take $m\geq m_0$ large enough so that $0 < 1/m <  (\sigma_0 -1)R.$ We obtain
\begin{equation*}
\|	u_{n_0,\sigma_0,m} - u \|_{D^{s,2}(\mathbb{R}^N)} \leq \| u_{n_0,\sigma_0,m}    - u_{n_0,\sigma_0}\|_{D^{s,2}(\mathbb{R}^N)} + \| u_{n_0,\sigma_0 } - u_{n_0} \| _{D^{s,2}(\mathbb{R} ^N)} + \| u_{n_0} - u \| _{D^{s,2}(\mathbb{R} ^N)}<\varepsilon.
\end{equation*}
In conclusion, for any $\varepsilon > 0$, we have found a function $\phi_\varepsilon := u_{n_0,\sigma_0,m} \in C^\infty_0(\mathbb{R}^N \setminus \overline{B}_R)$ such that $\| \phi_\varepsilon - u \|_{D^{s,2}(\mathbb{R}^N)} < \varepsilon$, which establishes the density property and completes the proof.
\end{proof}
Combining the uniform bound from Corollary \ref{cor45}, we obtain a precise domination of the solution $u_{\lambda,k}$ by $\Gamma_\ast$ near the origin, more precisely,
\begin{equation}\label{est_orig}
	u_{\lambda,k}(x)\leq C_\ast   R^{N-2s}  (1+\lambda k^{q-p})^{\frac{1}{2^{\ast}_s-p}}\Gamma_\ast (x),\quad \text{for } |x| <R.
\end{equation}
Having established the estimate inside the ball, we now turn our attention to the decay behavior of the solution in the exterior region, proving that it decays polynomially at infinity.
	\begin{proposition}\label{p_brutal}
		The positive solution $u_{\lambda,k}$ of the auxiliary problem satisfies
		\begin{equation*}
		u_{\lambda,k}(x)\leq C_\ast   R^{N-2s} (1+\lambda k^{q-p})^{\frac{1}{2^{\ast}_s-p}} |x|^{-(N-2s)},\quad \text{for }|x|>R.
		\end{equation*}
	\end{proposition}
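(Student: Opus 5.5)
We prove the bound by comparison. Set $M:=C_\ast R^{N-2s}(1+\lambda k^{q-p})^{\frac{1}{2^\ast_s-p}}$ and $w:=M\Gamma_\ast-u_{\lambda,k}$. We aim to show $w\geq 0$ a.e. in $\mathbb{R}^N$. Since $\Gamma_\ast(x)=|x|^{-(N-2s)}$ for $|x|>R$, this gives the claim. By \eqref{est_orig} (which comes from Corollary \ref{cor45}), $w\geq 0$ in $B_R$. By Lemma \ref{l_aste} and \ref{K_quatro}, $w\in D^{s,2}(\mathbb{R}^N)=D^{s,2}_{K_s}(\mathbb{R}^N)$. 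Hence $w^-\in D^{s,2}(\mathbb{R}^N)$, and $w^-=0$ a.e. in $B_R$.

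The first step is a weak inequality for $w$ outside $B_R$. Combining \ref{K_cinco} with Proposition \ref{p_dense}, which approximates nonnegative functions vanishing in $B_R$ by nonnegative test functions in $C_0^\infty(\mathbb{R}^N\setminus\overline B_R)$, we get
\[ [\Gamma_\ast,\phi]\geq 0 \quad\text{for all } \phi\in D^{s,2}(\mathbb{R}^N),\ \phi\geq 0,\ \phi=0 \text{ in } B_R. \]
For $u_{\lambda,k}$ we use the equation: in $B_R^c$, Remark \ref{r1} gives $g_{\lambda,k}(x,u)\leq V(x)u/\nu$. So for such $\phi$ (also with $\int V\phi^2<\infty$) we have
\[ [u_{\lambda,k},\phi]+\Big(1-\tfrac1\nu\Big)\int V u_{\lambda,k}\phi\,dx\leq 0 . \]

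Next we test with $\phi=w^-$. The main obstacle is checking that $w^-$ is admissible, namely $\int V(w^-)^2<\infty$. This holds because $0\leq w^-\leq u_{\lambda,k}$ wherever $w^-\neq0$, and $u_{\lambda,k}\in E$. Subtracting the two inequalities gives
\[ [w,w^-]\geq \Big(1-\tfrac1\nu\Big)\int V u_{\lambda,k}w^-\,dx\geq 0 . \]
We also use the standard pointwise inequality $(w(x)-w(y))(w^-(x)-w^-(y))\leq -(w^-(x)-w^-(y))^2$, which yields $[w,w^-]\leq -[w^-,w^-]$. Therefore $[w^-,w^-]\leq 0$. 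Since $w^-\in L^{2^\ast_s}$, this forces $w^-=0$, that is, $u_{\lambda,k}\leq M\Gamma_\ast$.

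There is a subtle point on the right-hand side: the term $\int V u\,w^-$ must have the right sign, and it does since $V\geq0$ and $u>0$. There is also an alternative route: apply Lemma \ref{l_max} with $a=(1-1/\nu)V\mathcal{X}_{B_R^c}$. However, the direct energy argument above avoids needing a supersolution inequality inside $B_R$.
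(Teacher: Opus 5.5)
Your proposal is correct and follows essentially the paper's argument: compare $u_{\lambda,k}$ with $M\Gamma_\ast$, extend \ref{K_cinco} by density (Proposition \ref{p_dense}) to the test function $(u_{\lambda,k}-M\Gamma_\ast)^+$, test the equation with it using Remark \ref{r1} outside $B_R$, and conclude with the elementary positive-part inequality. The only differences are cosmetic: you take the opposite sign convention for $w$, and you extend the supersolution inequality to the closure before testing, whereas the paper passes to the limit along the approximating sequence $\phi_n$.
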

	\begin{proof}
		Let us define the function
		\begin{equation*}
			v(x) := C_\ast R^{N-2s} (1+\lambda k^{q-p})^{\frac{1}{2^{\ast}_s-p}} \Gamma_\ast(x),
		\end{equation*}
		and set $w := u_{\lambda,k} - v.$ By estimate \eqref{est_orig}, we have $u_{\lambda,k}(x) \leq v(x)$ for $|x| < R,$ which implies $w^+ = 0$ in $B_R.$ Our goal is to show that $w^+ = 0$ in $\mathbb{R}^N.$
		To this end, Lemma \ref{l_aste} ensures $v,w \in D^{s,2}(\mathbb{R}^N)$. Consequently, Proposition \ref{p_dense} allows us to find an approximating sequence $(\phi_n) \subset C^\infty_0(\mathbb{R}^N \setminus \overline{B}_R)$ such that $\phi_n \to w^+$ in $D^{s,2}(\mathbb{R}^N).$ Next, employing the elementary inequality $(a^+ - b^+)^2 \leq (a-b)(a^+ - b^+)$ for $a,b \in \mathbb{R},$ we obtain
		\begin{equation*}
			[w^+]^2 \leq [w, w^+] = \lim _{n \rightarrow \infty } [w,\phi_n ] = \lim _{n \rightarrow \infty } \big( [u_{\lambda,k},\phi_n] - [v ,\phi_n] \big).
		\end{equation*}
		On the other hand, condition \ref{K_cinco} implies that $[\Gamma_\ast ,\phi_n] \geq 0.$ Since $v$ is a positive multiple of $\Gamma_\ast,$ it follows that
		\begin{equation*}
			[w^+]^2 \leq \lim _{n \rightarrow \infty } [u_{\lambda,k},\phi_n] = [u_{\lambda,k} , w^+].
		\end{equation*}
		Furthermore, using another elementary inequality, $(a-b)^+ \leq a$ for $a,b \geq 0,$ yields $w^+ \in E.$ This allows us to take $w^+$ as a test function in \eqref{def_ws}. Taking Remark \ref{r1} into account, we find
		\begin{equation*}
			[u_{\lambda,k}, w^+] = \int _{B_R^c } g(x,u_{\lambda,k} ) w^+ \dx - \int _{B_R^c }  V(x) u_{\lambda,k}  w^+ \dx \leq \left( \frac{1}{\nu} -1 \right) \int _{B_R^c} V(x) u_{\lambda,k}  w^+ \dx \leq 0.
		\end{equation*}
		Consequently, $[w^+]^2 \leq 0,$ which implies $w^+ = 0,$ completing the proof.
	\end{proof}
	Equipped with the necessary preceding results, we are now ready to prove our main result.	
	\begin{proof}[Proof of Theorem \ref{main} completed]
		By Corollary \ref{cor45},
		\begin{equation*}
			\|u_{\lambda,k}\|_{\infty}  \leq C_\ast(1+\lambda k^{q-p})^{\frac{1}{2^{\ast}_s-p}}.
		\end{equation*}
		Observe that the quantity $(1+\lambda k^{q-p})^{\frac{1}{2^{\ast}_s-p}}$ can be made arbitrarily close to $1$ by taking $\lambda$ sufficiently small. Specifically, fixing $k_0 > C_\ast$, there exists $\lambda_0 > 0$ such that if $\lambda \in (0, \lambda_0)$, then $(1+\lambda k_0^{q-p})^{\frac{1}{2^{\ast}_s-p}} < k_0 / C_\ast$. Thus, $\|u_{\lambda,k_0}\|_{\infty} < k_0,$ for $\lambda \in (0, \lambda_0)$. Consequently, by the definition of $f_{\lambda, k_0}$, we have
		\begin{equation*}
			f_{\lambda, k_0}(u_{\lambda,k_0}) = f(u_{\lambda,k_0})+\lambda u_{\lambda,k_0}^{q-1},\quad \text{for }\lambda \in (0, \lambda_0).
		\end{equation*}
		On the other hand, Remark \ref{r1} yields the bound $f_{\lambda , k_0} (t) \leq C_1 (1+\lambda_0 k_0^{q-p} ) t^{2_s^\ast-1} = C'_1 t^{2_s^\ast-1}.$ Combining this with Proposition \ref{p_brutal} allows us to deduce the following estimate
		\begin{equation*}
			\frac{f_{\lambda, k_0} (u_{\lambda,k_0}) }{u_{\lambda,k_0}} \leq C'_1 C_\ast ^{2^\ast _s -2} (1+\lambda _0 k_0^{q-p} )^{\frac{2^\ast_s -2}{2^\ast _s - p }} R^{(N-2s)(2^\ast _s -2)  } |x|^{- (N-2s)(2^\ast _s -2)},\quad \text{for }|x| > R.
		\end{equation*}
	Next, let us set $\Lambda _\ast = \nu  C'_1 C_\ast ^{2^\ast _s -2} (1+\lambda _0 k_0^{q-p} )^{\frac{2^\ast_s -2}{2^\ast _s - p }}.$ For any $\Lambda > \Lambda_\ast$, it readily follows that $\Lambda / \nu > C'_1 C_\ast ^{2^\ast _s -2} (1+\lambda _0 k_0^{q-p} )^{\frac{2^\ast_s -2}{2^\ast _s - p }}.$ Therefore, recalling that $(N-2s)(2^\ast_s - 2) = 4s$, using condition \ref{V_dois}, we obtain
	\begin{equation*}
	\frac{f_{\lambda, k_0} (u_{\lambda,k_0}) }{u_{\lambda,k_0}} \leq \frac{1}{\nu} \Lambda \frac{R^{4s}}{|x|^{4s}} \leq \frac{1}{\nu}V(x),\quad \text{for }|x|>R.
	\end{equation*}
	This implies that $\bar{f}_{\lambda,k_0} (u_{\lambda,k_0}) = f_{\lambda,k_0} (u_{\lambda,k_0}),$ for $|x| > R.$ In particular, for any $\lambda \in (0,\lambda_0)$, the modified nonlinearity satisfies $g_{\lambda , k_0} (u_{\lambda,k_0}) = f(u_{\lambda,k_0})+\lambda u_{\lambda,k_0}^{q-1}. $ Thus, setting $u_{\lambda, \Lambda} := u_{\lambda, k_0}$, we conclude that $u_{\lambda, \Lambda}\in E$ is a solution to Eq. \eqref{P} provided $\lambda \in (0,\lambda _0)$ and $\Lambda > \Lambda _\ast$.
	\end{proof}

	\appendix
	
	\section{A nontrivial example of a singular kernel satisfying our hypotheses}\label{s_app_ex}
	The purpose of this appendix is to show that the kernel $K_s(x) = (1 + a(x))|x|^{-(N+2s)}$ verifies hypotheses \ref{K_um}--\ref{K_cinco}, where $a \in C^\infty(\mathbb{R}^N)$ is a radial cut-off function satisfying $0 \leq a(x) \leq 1$ in $\mathbb{R}^N$, with $a(x) = 0$ for $|x| \leq R/2$ and $a(x) = 1$ for $|x| \geq R$. Since $a$ is radial and bounded, it is clear that $K_s$ readily satisfies \ref{K_um}--\ref{K_quatro}. In particular, $D^{s,2}(\mathbb{R}^N) = D_{K_s}^{s,2}(\mathbb{R}^N).$ Thus, it only remains to prove \ref{K_cinco}. To do so, we recall Lemma \ref{l_aste} and begin by establishing the following fact.
	\begin{proposition}\label{p_supersol}
		$(-\Delta)^s \Gamma_\ast \geq 0$ in $\mathbb{R}^N \setminus \overline{B}_R$ in the weak sense.
	\end{proposition}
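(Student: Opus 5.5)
The plan is to show that, for every $\phi \in C^\infty_0(\mathbb{R}^N\setminus\overline{B}_R)$ with $\phi\geq 0$,
\begin{equation*}
\int_{\mathbb{R}^N}\int_{\mathbb{R}^N}\frac{(\Gamma_\ast(x)-\Gamma_\ast(y))(\phi(x)-\phi(y))}{|x-y|^{N+2s}}\dxdy \geq 0 .
\end{equation*}
We will reduce this to a pointwise statement. Since $\Gamma_\ast\in D^{s,2}(\mathbb{R}^N)$ by Lemma \ref{l_aste} and $\phi$ is smooth with compact support, symmetry of the kernel turns the double integral into $2\int_{\mathbb{R}^N}\phi(x)\,\mathcal{L}\Gamma_\ast(x)\dx$, where $\mathcal{L}\Gamma_\ast(x)=\mathrm{P.V.}\int(\Gamma_\ast(x)-\Gamma_\ast(y))|x-y|^{-(N+2s)}\dy$. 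This step is justified by truncating the kernel to $|x-y|>\varepsilon$, applying Fubini, and letting $\varepsilon\to0$. The limit is legitimate because $\Gamma_\ast$ is smooth near $\supp\phi$ (which stays at positive distance from $\partial B_R$), so the principal value converges locally uniformly on $\supp\phi$. It therefore suffices to prove $\mathcal{L}\Gamma_\ast(x)\ge 0$ pointwise for $|x|>R$; we do not need to track normalizing constants.

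\textbf{Pointwise sign.} Write $\Gamma_\ast=\Gamma-(\Gamma-R^{-(N-2s)})^+$, where $\Gamma(y)=|y|^{-(N-2s)}$. The correction $\psi:=(\Gamma-R^{-(N-2s)})^+$ is nonnegative, supported in $\overline{B}_R$, and integrable since $N-2s<N$. Fix $|x|>R$. Then $\psi(x)=0$, and the integrand defining $-\mathcal{L}\psi(x)$ has no singularity at $y=x$, so
\begin{equation*}
\mathcal{L}\Gamma_\ast(x)=\mathcal{L}\Gamma(x)+\int_{B_R}\frac{\psi(y)}{|x-y|^{N+2s}}\dy .
\end{equation*}
For $x\neq 0$ we have $\mathcal{L}\Gamma(x)=0$, because $\Gamma$ is (a multiple of) the fundamental solution of $(-\Delta)^s$ (see \cite{MR5039745}). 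More precisely, $(-\Delta)^s\Gamma=c\,\delta_0$ in the distributional sense, and $\Gamma$ is smooth away from the origin, so the pointwise principal value at $x\neq0$ agrees with the distribution, which vanishes there. The remaining integral is nonnegative, hence $\mathcal{L}\Gamma_\ast(x)\geq 0$ for $|x|>R$.

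\textbf{Main obstacle.} The step needing the most care is the pointwise identity $\mathcal{L}\Gamma(x)=0$ for $x\neq 0$, together with the justification of splitting $\mathcal{L}\Gamma_\ast$ into two pieces: $\Gamma$ is not in $L^1$ globally, only with weight $(1+|y|)^{-(N+2s)}$. I would first check that $\Gamma\in L^1(\mathbb{R}^N,(1+|y|)^{-N-2s}\dy)$, which holds since its singularity at $0$ is integrable and its tail decays. This makes $\mathcal{L}\Gamma(x)$ well defined as a principal value at $x\ne0$ by smoothness near $x$, and the splitting then follows from linearity of the principal value. If one prefers to avoid citing the distributional identity, an alternative is to use the Fourier-side fact that $\int \Gamma\,(-\Delta)^s\varphi=c\,\varphi(0)$ for $\varphi\in C_0^\infty$, choose $\varphi$ supported away from $0$, and then pass from this weak statement back to the pointwise one via smoothness of $\Gamma$ off the origin.
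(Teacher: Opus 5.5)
Your argument is correct and is essentially the paper's proof. The paper uses the same splitting $\Gamma_\ast=\Gamma+(\Gamma_\ast-\Gamma)$ with $\Gamma_\ast-\Gamma=-\psi$, kills $[\Gamma,\phi]_{D^{s,2}(\mathbb{R}^N)}$ because $\Gamma$ is $s$-harmonic away from the origin, and computes $[\Gamma_\ast-\Gamma,\phi]_{D^{s,2}(\mathbb{R}^N)}=2\iint \psi(x)\phi(y)|x-y|^{-(N+2s)}\dxdy\geq 0$ from the disjoint supports. The only difference is that you work with the pointwise principal value after a justified weak-to-pointwise reduction, which avoids giving meaning to the bilinear form $[\Gamma,\phi]$ for $\Gamma\notin D^{s,2}(\mathbb{R}^N)$, at the cost of invoking the pointwise rather than the weak vanishing of $(-\Delta)^s\Gamma$ off the origin.
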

	\begin{proof}
	The proof relies on the decomposition $\Gamma _\ast = \Gamma + (\Gamma _\ast - \Gamma)$. Although $\Gamma \notin D^{s,2}(\mathbb{R}^N)$, the bilinear form $[\Gamma, \phi]_{D^{s,2}(\mathbb{R}^N)}$ is well defined and its integrability is rigorously established in \cite{MR5039745}. In particular, $[\Gamma _\ast - \Gamma , \phi]_{D^{s,2}(\mathbb{R}^N)}$ is also well defined. Since $(\Gamma _\ast - \Gamma) (x) = 0$ for $|x| > R$, and $(\Gamma _\ast - \Gamma) (x) = R^{-(N-2s)} - |x|^{-(N-2s)}$ for $|x| \leq R$, a direct computation allows us to write
	\begin{multline*}
		\int_{\mathbb{R}^N } \int_{\mathbb{R}^N } \big( (\Gamma _\ast - \Gamma) (x) - (\Gamma _\ast - \Gamma) (y)  \big) (\phi (x) - \phi(y)) |x-y|^{-(N+2s)}   \dxdy  \\
		= 2 \int _{B_R} \int _{B_R^c } \big( |x|^{-(N-2s)} - R^{-(N-2s)}  \big)  \phi (y) |x-y|^{-(N+2s)} \dxdy \geq 0,
	\end{multline*}
	for all $\phi \in C^\infty _0( \mathbb{R}^N \setminus \overline{B}_R)$ with $\phi \geq 0$. Thus, using the fact that $(-\Delta)^s \Gamma = 0$ weakly in $\mathbb{R}^N\setminus \{ 0\}$ (see also \cite{MR5039745}), we conclude that
	\begin{align*}
		[\Gamma _\ast ,\phi ]_{D^{s,2} (\mathbb{R}^N )} &= [ \Gamma ,\phi]_{D^{s,2} (\mathbb{R}^N )} + [   \Gamma _\ast - \Gamma  ,\phi ]_{D^{s,2} (\mathbb{R}^N )} \\
		&= [   \Gamma _\ast - \Gamma  ,\phi ]_{D^{s,2} (\mathbb{R}^N )} \geq 0,\quad \forall \, \phi \in C^\infty _0( \mathbb{R}^N \setminus \overline{B}_R),\ \phi \geq 0. \qedhere
	\end{align*}
	\end{proof}
		
	The remainder of this appendix aims to show that $[\Gamma_\ast,\phi] \geq 0$ for any nonnegative test function $\phi \in C^\infty _0( \mathbb{R}^N \setminus \overline{B}_R)$. Let us define the auxiliary kernel $K_a(x) = a(x) |x|^{-(N+2s)}$. Since $[\Gamma_\ast ,  \phi ] _{D^{s,2}(\mathbb{R}^N)} \geq 0$ by Proposition \ref{p_supersol}, we deduce that
	\begin{equation}\label{apx_um}
		[\Gamma_\ast ,  \phi ] \geq \int _{\mathbb{R}^N} \int _{\mathbb{R}^N} ( \Gamma _\ast (x) - \Gamma _\ast (y) ) (\phi (x) - \phi(y)) K_a(x-y) \dxdy.
	\end{equation}
	Therefore, it suffices to show that the right-hand side of \eqref{apx_um} is nonnegative. We point out that, by removing the singularity of the fractional Laplacian kernel near the origin, the analysis of this term can be performed much more directly, as the following result shows.
	\begin{lemma}\label{l_integra}
		The functions $(x,y) \mapsto \phi (x) (\Gamma  (x)   - \Gamma  (y) ) K_a(x-y) ,$ $(x,y) \mapsto \phi (x) (\Gamma _\ast (x)   - \Gamma _\ast (y) ) K_a(x-y) $ and $(x,y) \mapsto \phi (x) (\Gamma  (x)   - \Gamma  (y) ) |x-y|^{-(N+2s)} $ belong to $L^1 (\mathbb{R}^N \times \mathbb{R}^N).$
	\end{lemma}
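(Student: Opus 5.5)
The plan is to exploit two facts: $\phi\in C^\infty_0(\mathbb{R}^N\setminus\overline{B}_R)$ is supported in a compact annulus $\{\rho_1\le |x|\le \rho_2\}$ with $R<\rho_1$, and $K_a(z)=a(z)|z|^{-(N+2s)}$ vanishes for $|z|\le R/2$. Since $|\phi|\le\|\phi\|_\infty\mathcal{X}_{\supp\phi}$, it suffices to bound
\begin{equation*}
\int_{\supp\phi}\Big(\int_{\mathbb{R}^N}|G(x)-G(y)|\,k(x-y)\dy\Big)\dx
\end{equation*}
for $G\in\{\Gamma,\Gamma_\ast\}$ and the relevant kernel $k$. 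Tonelli then gives membership in $L^1(\mathbb{R}^N\times\mathbb{R}^N)$ once the inner integral is bounded uniformly for $x\in\supp\phi$.

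For the two terms involving $K_a$, fix $x\in\supp\phi$. Then $|G(x)|\le \rho_1^{-(N-2s)}$ and $K_a(x-y)\le \mathcal{X}_{\{|x-y|\ge R/2\}}|x-y|^{-(N+2s)}$. The part containing $G(x)$ is at most $\rho_1^{-(N-2s)}\int_{|z|\ge R/2}|z|^{-(N+2s)}\dz<\infty$. For the part containing $G(y)$ with $G=\Gamma_\ast$, we simply use $\Gamma_\ast\le R^{-(N-2s)}$. For $G=\Gamma$ there is a singularity at $y=0$, so we split. On $|y|\ge 1$ we have $\Gamma(y)\le1$, and the integral is bounded as before. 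On $|y|<1$ we have $|x-y|\ge\rho_1-1>0$ because $\rho_1>R>1$, so the kernel is bounded there, and $\int_{B_1}|y|^{-(N-2s)}\dy<\infty$. This yields uniform bounds for the inner integral, which settles the first two claims.

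The third function, with the full kernel $|x-y|^{-(N+2s)}$, is the main obstacle because of the diagonal singularity. We split the $y$-integral into $|x-y|<\rho_1/2$ and $|x-y|\ge\rho_1/2$. On the far region the argument above applies verbatim, with $R/2$ replaced by $\rho_1/2$ and the $B_1$ singularity of $\Gamma$ handled in the same way. On the near region $|y|\ge\rho_1/2$, so the Mean Value Theorem (as in the proof of Lemma \ref{l_aste}) gives $|\Gamma(x)-\Gamma(y)|\le C\rho_1^{-(N-2s)-1}|x-y|$. The integrand is then dominated by $C|x-y|^{1-N-2s}$, which is integrable near the diagonal exactly when $1-2s>-1$, i.e.\ $s<1$. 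This is the step where only first-order cancellation is used, and it suffices because the integrand is not squared.

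Collecting the three uniform bounds and integrating over the compact set $\supp\phi$ proves the lemma. Note that no symmetrization is needed, since the claim is absolute integrability of the one-sided expression $\phi(x)(G(x)-G(y))k(x-y)$.
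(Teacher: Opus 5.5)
Your treatment of the two $K_a$ integrands is correct and is essentially the paper's argument. The paper also uses that $K_a(x-y)=0$ for $|x-y|\le R/2$, bounds $\Gamma(x)$ on $\supp\phi$, and splits the $y$-integral (into $B_R$ and $B_R^c$ instead of your $B_1$ and $B_1^c$), relying on the local integrability of $\Gamma$ at the origin. For the third integrand the paper gives no argument and only cites an external reference. Your direct proof of that case is where the comparison matters, and there it breaks down.

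\textbf{The near-diagonal step is wrong.} In polar coordinates about $x$,
\begin{equation*}
\int_{B_{\rho_1/2}(x)}|x-y|^{1-N-2s}\dy=C_N\int_0^{\rho_1/2}\varrho^{1-N-2s}\varrho^{N-1}\drho=C_N\int_0^{\rho_1/2}\varrho^{-2s}\drho,
\end{equation*}
which is finite only when $s<1/2$. The condition $1-2s>-1$ you wrote belongs to the squared difference $|x-y|^{2-N-2s}$ from Lemma \ref{l_aste}. One power of $|x-y|$ is exactly what is \emph{not} enough once $s\ge 1/2$.

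\textbf{No sharper estimate can save it.} Suppose $\phi\ge c>0$ on an open set $U$. For $x\in U$ and $r<\rho_1/2$, Taylor's formula gives $|\Gamma(x)-\Gamma(y)|\ge|\nabla\Gamma(x)\cdot(y-x)|-C|x-y|^2$ on $B_r(x)$, with $|\nabla\Gamma(x)|=(N-2s)|x|^{-(N-2s)-1}>0$. Hence
\begin{equation*}
\int_{B_r(x)}\frac{|\Gamma(x)-\Gamma(y)|}{|x-y|^{N+2s}}\dy\ \ge\ c_N|\nabla\Gamma(x)|\int_0^r\varrho^{-2s}\drho-C\int_0^r\varrho^{1-2s}\drho=+\infty\quad\text{for } s\ge\tfrac12 .
\end{equation*}
So the third assertion, read as absolute integrability, holds only for $s<1/2$ (automatic when $N=1$), and your argument proves exactly that case.

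\textbf{What is true for $s\in[1/2,1)$.} The symmetrized integrand $(\Gamma(x)-\Gamma(y))(\phi(x)-\phi(y))|x-y|^{-(N+2s)}$ is in $L^1(\mathbb{R}^N\times\mathbb{R}^N)$. Near the diagonal both points stay away from the origin, so both factors are Lipschitz and you gain $|x-y|^2$. The one-sided inner integral, however, exists only as a principal value, because the odd term $\nabla\Gamma(x)\cdot(y-x)$ cancels over spheres. That principal-value version is what is actually needed to justify \eqref{int_hard}, whose kernel $\mathcal{A}$ carries the same singularity. It is also consistent with the spherical-mean computation in \eqref{qse_fim}.
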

	\begin{proof}
		By the Fubini-Tonelli theorem, since $\supp( \phi) \subset \mathbb{R}^N \setminus \overline{B}_R$ and $K_a(x-y) = 0$ for $|x-y| \leq R/2$, it suffices to prove that
		\begin{equation*}
			\int _{\mathbb{R}^{N} } \int _{\mathbb{R}^{N} } \phi (x) \big|\Gamma  (x)   - \Gamma  (y) \big| K_a(x-y)  \dxdy = \int _{B_R^c } \phi (x) \left(  \int _{B^c_{R/2} (x) } \big|\Gamma  (x)   - \Gamma  (y) \big| K_a(x-y) \dy   \right)   \dx  < + \infty.
		\end{equation*}
		To estimate the inner integral for a fixed $x \in \supp(\phi) \subset B_R^c$, we split the domain of integration into $B_R$ and $B_R^c$. For $y \in B^c_{R/2}(x) \cap B_R$, we use the bounds $K_a(x-y) \leq (R/2)^{-(N+2s)}$ and $\Gamma(x) \leq R^{-(N-2s)},$ together with the integrability of $\Gamma$ over $B_R$, to obtain the following estimate
		\begin{equation}\label{fb_um}
			\int _{B^c_{R/2}(x) \cap B_R   } \big| \Gamma  (x) - \Gamma  (y)  \big| K_a(x-y)\dy \leq C_N R^{-N} .  
		\end{equation}
		Next, for $y \in B^c_{R/2}(x) \cap B^c_R$, we have $\Gamma(x) \leq R^{-(N-2s)}$ and $\Gamma(y) \leq R^{-(N-2s)}$. Hence, using the bound $K_a(z) \leq |z|^{-(N+2s)}$ and setting $z = x-y$ yields
		\begin{equation}\label{fb_dois}
			\int _{B^c_{R/2}(x) \cap B^c_R   }\big| \Gamma  (x) - \Gamma  (y)  \big| K_a(x-y)\dy \leq 2 R^{-(N-2s)} \int _{B^c_{R/2}} |z|^{-(N+2s)} \dz   \leq C_N R^{-N}.
		\end{equation}
		Combining \eqref{fb_um} and \eqref{fb_dois}, we conclude that
		\begin{equation*}
			\int _{B_R^c } \phi (x) \left(  \int _{B^c_{R/2} (x) } \big|\Gamma  (x)   - \Gamma  (y) \big| K_a(x-y) \dy   \right)   \dx \leq C_N R^{-N} \int _{B_R^c } \phi \dx < + \infty.
		\end{equation*}
		The integrability argument for the function $(x,y) \mapsto \phi (x) (\Gamma _\ast (x)   - \Gamma _\ast (y) ) K_a(x-y)$ is analogous. The fact that $(x,y) \mapsto \phi (x) (\Gamma  (x)   - \Gamma  (y) ) |x-y|^{-(N+2s)} $ belongs to $L^1 (\mathbb{R}^N \times \mathbb{R}^N)$ is established in \cite{MR5039745}.
	\end{proof}
Lemma \ref{l_integra}, guarantees sufficient integrability for the right-hand side of \eqref{apx_um}. This allows us to use the symmetry of the kernel to rewrite the integral as follows
	\begin{equation*}
		\int _{\mathbb{R}^N} \int _{\mathbb{R}^N} ( \Gamma _\ast (x) - \Gamma _\ast (y) ) (\phi (x) - \phi(y)) K_a(x-y) \dxdy = 2 \int _{B_R^c }   \phi(x) \left(  \int _{\mathbb{R}^N} (\Gamma_\ast (x) - \Gamma_\ast (y) ) K_a(x-y)  \dy  \right) \dx.
	\end{equation*}
	Moreover, for $x \in B^c_R,$ we have
	\begin{equation*}
		\Gamma_\ast (x) - \Gamma_\ast (y) = (\Gamma (x ) - \Gamma (y)) + (\Gamma (y) - \Gamma _\ast (y)) \geq \Gamma (x ) - \Gamma (y).
	\end{equation*}
	Thus, for the same reason as above,
	\begin{equation*}
		\begin{aligned}
			\int _{\mathbb{R}^N} \int _{\mathbb{R}^N} ( \Gamma _\ast (x) - \Gamma _\ast (y) ) (\phi (x) - \phi(y)) &K_a(x-y) \dxdy  \\ &\geq 2 \int _{B_R^c }   \phi(x) \left(  \int _{\mathbb{R}^N} (\Gamma (x) - \Gamma (y) ) K_a(x-y)  \dy  \right) \dx\\
			&=\int _{\mathbb{R}^N} \int _{\mathbb{R}^N} ( \Gamma  (x) - \Gamma (y) ) (\phi (x) - \phi(y)) K_a(x-y) \dxdy.
		\end{aligned}
	\end{equation*}
	On the other hand, we can decompose $K_a(z) = |z|^{-(N+2s)}-\mathcal{A}(z)$, where $\mathcal{A}(z) = (1-a(z))|z|^{-(N+2s)}$. Since $\Gamma$ is the fundamental solution of the fractional Laplacian in $\mathbb{R}^N \setminus \{ 0 \}$, the integral associated with the standard kernel vanishes, yielding
	\begin{equation*}
		\int _{\mathbb{R}^N} \int _{\mathbb{R}^N} ( \Gamma  (x) - \Gamma (y) ) (\phi (x) - \phi(y)) K_a(x-y) \dxdy = \int _{\mathbb{R}^N} \int _{\mathbb{R}^N}   ( \Gamma  (y) - \Gamma (x) )(\phi (x) - \phi(y)) \mathcal{A}(x-y) \dxdy.
	\end{equation*}
	We once again apply Lemma \ref{l_integra} to write
	\begin{equation}\label{int_hard}
		\int _{\mathbb{R}^N} \int _{\mathbb{R}^N}   ( \Gamma  (y) - \Gamma (x) )(\phi (x) - \phi(y)) \mathcal{A}(x-y) \dxdy = 2 \int _{\mathbb{R}^N} \phi (x) \left( \int _{\mathbb{R}^N}  ( \Gamma  (y) - \Gamma (x) ) \mathcal{A}(y-x) \dy \right)  \dx .
	\end{equation}
	Since $\phi$ is supported outside $\overline{B}_R$, we restrict our analysis of the inner integral on the right-hand side of \eqref{int_hard} to the case $x \in B_R^c$. Considering the change of variables $z = y - x$, we find
	\begin{equation*}
		\int _{\mathbb{R}^N}  ( \Gamma  (y) - \Gamma (x) ) \mathcal{A}(y-x) \dy = \int _{\mathbb{R}^N}  ( \Gamma  (x+z) - \Gamma (x) ) \mathcal{A}(z) \dz.
	\end{equation*}
	Next, we pass to polar coordinates $z = \varrho \omega$, with $\varrho >0$ and $\omega \in \mathbb{S}^{N-1}$. Recall that $\operatorname{supp} (\mathcal{A}) \subset \overline{B}_R$. Denoting by $\mathcal{A}_\ast $ the radial profile of $\mathcal{A}$, so that $\mathcal{A}(z) = \mathcal{A}_\ast (|z|)$, we can rewrite the integral as
	\begin{align}
		\int _{\mathbb{R}^N}  ( \Gamma  (x+z) - \Gamma (x) ) & \mathcal{A}(z) \dz = \int _0 ^R \left( \int _{\mathbb{S}^{N-1} } ( \Gamma  (x+\varrho \omega ) - \Gamma (x) ) \mathcal{A}(\varrho \omega)   \varrho ^{N-1}     \,{\rm d}S_{\omega } \right)  \drho \nonumber \\
		&=N |B_1|  \int _0 ^R \mathcal{A}_\ast (\varrho ) \varrho ^{N-1}\left( \frac{1}{N|B_1| \varrho ^{N-1}}\int _{\partial B_\varrho (x) }  (\Gamma (\xi )  -\Gamma (x) ) \,{\rm d}S_{\xi } \right) \drho,\label{qse_fim}
	\end{align}
	where we applied the change of variables $\xi = x + \varrho \omega$ in the surface integral. Observe that $\Delta \Gamma (x) = 2(1-s)(N-2s)|x|^{-(N-2s) - 2} >0$ for $x \neq 0$. Since $|x| > R \geq \varrho,$ this strict subharmonicity guarantees the classical mean value inequality
	\begin{equation*}
		\Gamma (x) \leq \frac{1}{N|B_1| \varrho ^{N-1}}\int _{\partial B_\varrho (x) }  \Gamma (\xi )  \,{\rm d}S_{\xi },\quad\text{for } 0<\varrho \leq R.
	\end{equation*}
	This inequality implies that the expression in \eqref{qse_fim} is nonnegative. Following the chain of inequalities above, this leads to the conclusion that the right-hand side of \eqref{apx_um} is also nonnegative. Since the nonnegative test function $\phi \in C^\infty _0( \mathbb{R}^N \setminus \overline{B}_R)$ was chosen arbitrarily, this establishes condition \ref{K_cinco} for the kernel $K_s(x) = (1 + a(x))|x|^{-(N+2s)}$.
	\section{An important inequality}\label{s_app_inequality}
	This appendix is dedicated to proving the inequality
	\begin{equation}\label{ap_main}
		h(t,\tau )=2(nt-\tau | \tau |^{\beta-1})^{2}-C_\beta (t-\tau )(n^{2}t-\tau | \tau |^{2(\beta-1)}) \leq 0,
	\end{equation}
	with $\beta >1,$ $n \in \mathbb{N}$ and $C_\beta = 2 + \frac{2(\beta-1)^2}{2\beta-1}$, whenever $|t|>n^{1/(\beta -1)} $ and $|\tau |\leq n^{1/(\beta -1)}.$ For a fixed $|\tau |\leq n^{1/(\beta -1)}$, we define $h_{\tau}(t) = h(t,\tau ).$ Our objective is to show that $h_{\tau}(t) \leq 0$ for all $|t|>n^{1/(\beta -1)}.$ The proof is carried out by first establishing a few technical steps regarding related auxiliary functions.
	\begin{lemma}\label{l_be_um}
		Let $q_\beta (\tau ) = -4n \tau ^{\beta}+C_\beta n^{2}\tau +C_\beta \tau^{2\beta-1}$ for $\tau \geq 0.$ Then $q_\beta '(\tau )> 0$ for any $\tau \in [0,n^{1/(\beta -1)}].$
	\end{lemma}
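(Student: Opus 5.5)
The plan is to substitute $x=\tau^{\beta-1}$, which turns $q_\beta'$ into a quadratic polynomial in $x$, and then to show that this quadratic has negative discriminant. That makes it strictly positive for every real $x$, so the restriction $\tau\in[0,n^{1/(\beta-1)}]$ will not even be needed.

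First, differentiate directly:
\begin{equation*}
q_\beta'(\tau) = -4n\beta\,\tau^{\beta-1} + C_\beta n^{2} + C_\beta(2\beta-1)\,\tau^{2\beta-2}.
\end{equation*}
Setting $x=\tau^{\beta-1}\geq 0$, this reads
\begin{equation*}
P(x)=C_\beta(2\beta-1)\,x^{2}-4n\beta\, x+C_\beta n^{2},
\end{equation*}
a quadratic with leading coefficient $C_\beta(2\beta-1)>0$.

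Next, simplify the constant $C_\beta$:
\begin{equation*}
C_\beta = \frac{2(2\beta-1)+2(\beta-1)^2}{2\beta-1}=\frac{2\beta^{2}}{2\beta-1}.
\end{equation*}
The discriminant of $P$ is
\begin{equation*}
\Delta=16n^{2}\beta^{2}-4C_\beta^{2}(2\beta-1)n^{2}=16n^{2}\beta^{2}\left(1-\frac{\beta^{2}}{2\beta-1}\right).
\end{equation*}
Since $\beta^{2}-(2\beta-1)=(\beta-1)^{2}>0$ for $\beta>1$, we get $\Delta<0$ (recall $n\geq 1$).

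Hence $P(x)>0$ for all real $x$. In particular $q_\beta'(\tau)>0$ for every $\tau\geq 0$, and so on $[0,n^{1/(\beta-1)}]$. The only step needing care is the algebraic simplification of $C_\beta$; once $C_\beta=2\beta^2/(2\beta-1)$ is in hand, the sign of the discriminant reduces to $(\beta-1)^2>0$, and there is no real obstacle.
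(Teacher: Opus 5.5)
Your proof is correct; it differs from the paper's mainly in packaging. The paper stays in the variable $\tau$. It computes $q_\beta''(\tau)=\tau^{\beta-2}\bigl(-4\beta(\beta-1)n+2C_\beta(2\beta-1)(\beta-1)\tau^{\beta-1}\bigr)$ and uses its sign change to locate the minimizer $\tau_0=(\kappa_\beta n)^{1/(\beta-1)}$ of $q_\beta'$, with $\kappa_\beta=1/\beta$. It then asserts $q_\beta'(\tau_0)>0$.

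Under your substitution $x=\tau^{\beta-1}$, $\tau_0$ is exactly the vertex of $P$, at $x=n/\beta$. Your condition $\Delta<0$ is equivalent to the vertex value $-\Delta/\bigl(4C_\beta(2\beta-1)\bigr)=2n^{2}(\beta-1)^{2}/(2\beta-1)$ being positive. This is the same quantity the paper evaluates, though the paper does not simplify it.

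Your route is more economical in three ways. It needs no second derivative, hence no factor $\tau^{\beta-2}$, which is singular at $\tau=0$ when $1<\beta<2$, and no sign analysis. It carries out explicitly, via $C_\beta=2\beta^2/(2\beta-1)$ and $(\beta-1)^2>0$, the final positivity check that the paper leaves to the reader. And it shows transparently that $q_\beta'>0$ on all of $[0,\infty)$, so the restriction to $[0,n^{1/(\beta-1)}]$ plays no role here. The interval only enters later, in Lemma~\ref{l_be_dois}, where $p_\beta$ is bounded on $[-n^{1/(\beta-1)},n^{1/(\beta-1)}]$.

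As you note, both arguments need $n\neq 0$.
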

	\begin{proof}
		A direct computation yields
		\begin{multline*}
				q'_\beta(\tau) = -4\beta n \tau^{\beta-1}+C_\beta n^{2}+(2\beta-1)C_\beta\tau^{2(\beta-1)} \\
				\text{and} \quad q_\beta''(\tau) = \tau^{\beta-2}\left(-4\beta(\beta-1)n+2C_\beta(2\beta-1)(\beta-1)\tau^{\beta-1}\right).
		\end{multline*}
		Define $\kappa _\beta  =  \frac{2\beta}{(2\beta-1)C_\beta} > 0.$ It is easy to check that $\kappa _\beta = 1/\beta < 1.$ Moreover, analyzing the roots of the second derivative, we see that $q_\beta''\big((\kappa_\beta n)^{\frac{1}{\beta-1}}\big)=0,$ with
		\begin{equation*}
			q_\beta ''(\tau)<0 \quad \text{for } \tau \in \big[0,(\kappa_\beta n)^{\frac{1}{\beta-1}}\big), \quad \text{and} \quad q_\beta ''(\tau)>0 \quad \text{for } \tau \in \big((\kappa_\beta n)^{\frac{1}{\beta-1}},n^{\frac{1}{\beta-1}}\big].
		\end{equation*}
		In particular, $\tau_0 = (\kappa_\beta n)^{  1/(\beta -1 )}$ is the global minimum of $q'_\beta $ on $[0,n^{1/(\beta -1)}]$. Evaluating the first derivative at this point, we find
		\begin{equation*}
			q_\beta'\big((\kappa_\beta n )^{\frac{1}{\beta-1}}\big) = n^{2}\big(-4\beta \kappa_\beta +C_\beta +(2\beta-1)C_\beta \kappa_\beta^{2}\big) > 0.
		\end{equation*}
		Consequently, $q_\beta '(\tau)\geq q_\beta'\big((\kappa_\beta n )^{\frac{1}{\beta-1}}\big)>0$ for any $\tau \in [0,n^{1/(\beta -1)}]$.
	\end{proof}
	\begin{lemma}\label{l_be_dois}
		Define $p_\beta (\tau)= -4n\tau|\tau|^{\beta-1}+C_\beta n^{2}\tau +C_\beta \tau|\tau|^{2(\beta-1)}.$ Then $p_\beta (\tau) = q_\beta (\tau),$ for $\tau \geq 0,$ and $p_\beta$ is an odd function. Furthermore,
		\begin{equation*}
			|p_\beta (\tau)| \leq \lambda_\beta n^{2}n^{\frac{1}{\beta -1}}, \quad \text{for all } \tau \in \big[-n^{\frac{1}{\beta -1}},n^{\frac{1}{\beta -1}} \big],
		\end{equation*}
		where $\lambda_\beta=\frac{4(\beta-1)^{2}}{2 \beta -1}$.
	\end{lemma}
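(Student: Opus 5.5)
The plan is to reduce everything to the nonnegative half-line via oddness. Then I would use the monotonicity from Lemma \ref{l_be_um} to bound $p_\beta$ by its value at the endpoint $T:=n^{1/(\beta-1)}$, which can be computed explicitly.

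\emph{Step 1 (identification and oddness).} For $\tau\ge 0$ we have $\tau|\tau|^{\beta-1}=\tau^\beta$ and $\tau|\tau|^{2(\beta-1)}=\tau^{2\beta-1}$. Hence $p_\beta(\tau)=q_\beta(\tau)$ on $[0,\infty)$ directly from the definitions. Each of the maps $\tau\mapsto\tau|\tau|^{\beta-1}$, $\tau\mapsto \tau$ and $\tau\mapsto\tau|\tau|^{2(\beta-1)}$ is odd. So $p_\beta$ is a linear combination of odd functions, and therefore $p_\beta(-\tau)=-p_\beta(\tau)$. Consequently $|p_\beta(\tau)|=|p_\beta(|\tau|)|=|q_\beta(|\tau|)|$, and it suffices to bound $|q_\beta|$ on $[0,T]$.

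\emph{Step 2 (monotonicity).} By Lemma \ref{l_be_um}, $q_\beta'>0$ on $[0,T]$, so $q_\beta$ is increasing there. Since $q_\beta(0)=0$, this gives
\begin{equation*}
0\le q_\beta(\tau)\le q_\beta(T),\quad \text{for all } \tau\in[0,T].
\end{equation*}
In particular $|q_\beta(\tau)|\le q_\beta(T)$ on $[0,T]$.

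\emph{Step 3 (evaluation at the endpoint).} From $T^{\beta-1}=n$ we get $T^\beta=nT$ and $T^{2\beta-1}=T\cdot T^{2(\beta-1)}=n^2T$. Substituting these into $q_\beta$ gives
\begin{equation*}
q_\beta(T)=-4n^2T+C_\beta n^2T+C_\beta n^2T=n^2T\,(2C_\beta-4).
\end{equation*}
Using $C_\beta=2+\frac{2(\beta-1)^2}{2\beta-1}$, we find $2C_\beta-4=\frac{4(\beta-1)^2}{2\beta-1}=\lambda_\beta$. Therefore $q_\beta(T)=\lambda_\beta n^2 n^{1/(\beta-1)}$, and combining with Steps 1 and 2 yields the claimed bound on $[-T,T]$.

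No step is a genuine obstacle. The only delicate input is the positivity of $q_\beta'$ on the closed interval, which is exactly what Lemma \ref{l_be_um} provides. The remaining work is to check the algebraic identity $2C_\beta-4=\lambda_\beta$ carefully.
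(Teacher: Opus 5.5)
Your proposal is correct and follows the same route as the paper: Lemma \ref{l_be_um} makes $p_\beta = q_\beta$ increasing on $[0,n^{1/(\beta-1)}]$, so $0 \le p_\beta(\tau) \le p_\beta(n^{1/(\beta-1)})$ there, and oddness handles negative $\tau$. Your explicit check that $p_\beta(n^{1/(\beta-1)}) = (2C_\beta - 4)\, n^2 n^{1/(\beta-1)} = \lambda_\beta n^2 n^{1/(\beta-1)}$ fills in the endpoint computation that the paper states without detail.
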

	\begin{proof}
		By Lemma \ref{l_be_um}, for any $0 \leq \tau \leq n^{1/(\beta -1)}$, we have
		\begin{equation*}
			0 = p_\beta(0) \leq p_\beta(\tau) \leq p_\beta\big(n^{\frac{1}{\beta-1}}\big) = \lambda_\beta n^{2}n^{\frac{1}{\beta-1}}.
		\end{equation*}
		On the other hand, since $p_\beta$ is an odd function, for $-n^{\frac{1}{\beta-1}} \leq \tau < 0$ we have $0 < -\tau \leq n^{\frac{1}{\beta-1}}$. This implies
		\begin{equation*}
			-\lambda_\beta n^{2}n^{\frac{1}{\beta-1}} \leq -p_\beta(-\tau) = p_\beta(\tau) \leq 0.
		\end{equation*}
		Combining both cases yields the desired uniform bound for $|p_\beta(\tau)|$.
	\end{proof}
	\begin{lemma}\label{l_be_tres}
	For any $x,y \in \mathbb{R}$, the following inequality holds
	\begin{equation*}
		2\big(x|x|^{\beta-1}-y|y|^{\beta-1}\big)^{2}-C_\beta (x-y)\big(x|x|^{2(\beta-1)}-y|y|^{2(\beta-1)}\big)\leq 0.
	\end{equation*}
	\end{lemma}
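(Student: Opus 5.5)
The plan is to read both differences as integrals of derivatives along the segment between $y$ and $x$, and then apply the Cauchy--Schwarz inequality. The constant $C_\beta$ should turn out to be exactly the sharp constant that comes out of this.

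\emph{Setup.} By the symmetry of the expression in $(x,y)$ we may assume $y<x$; the case $x=y$ is trivial. Write $\zeta_2(t)=t|t|^{\beta-1}$ and $\zeta_1(t)=t|t|^{2(\beta-1)}$ as in Section \ref{s_decay}. Since $\beta>1$, both are $C^1$ functions on $\mathbb{R}$ with
\begin{equation*}
\zeta_2'(t)=\beta|t|^{\beta-1}, \qquad \zeta_1'(t)=(2\beta-1)|t|^{2(\beta-1)}.
\end{equation*}
Hence $\zeta_2(x)-\zeta_2(y)=\int_y^x \beta|t|^{\beta-1}\,\mathrm{d}t$ and $\zeta_1(x)-\zeta_1(y)=\int_y^x (2\beta-1)|t|^{2(\beta-1)}\,\mathrm{d}t$. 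In particular $(x-y)(\zeta_1(x)-\zeta_1(y))\geq 0$.

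\emph{Cauchy--Schwarz step.} Apply Cauchy--Schwarz on $[y,x]$ to the product $1\cdot\beta|t|^{\beta-1}$:
\begin{equation*}
\big(\zeta_2(x)-\zeta_2(y)\big)^2\leq (x-y)\int_y^x \beta^2|t|^{2(\beta-1)}\,\mathrm{d}t=\frac{\beta^2}{2\beta-1}(x-y)\big(\zeta_1(x)-\zeta_1(y)\big).
\end{equation*}

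\emph{Matching the constant.} Multiplying by $2$, it remains to check that $2\beta^2/(2\beta-1)=C_\beta$. This is a direct computation:
\begin{equation*}
C_\beta=\frac{2(2\beta-1)+2(\beta-1)^2}{2\beta-1}=\frac{2\beta^2}{2\beta-1}.
\end{equation*}
This yields the claimed inequality for all $x,y\in\mathbb{R}$.

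The only potential subtlety is the case where $y<0<x$, where $|t|^{\beta-1}$ is not smooth at the origin. It is nonetheless continuous and integrable, so the fundamental theorem of calculus and Cauchy--Schwarz apply without change, and no case splitting by sign is needed. I therefore expect no real obstacle; the main point is recognizing that $C_\beta$ equals the Cauchy--Schwarz constant $2\beta^2/(2\beta-1)$.
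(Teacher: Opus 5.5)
Your proof is correct and follows essentially the same argument as the paper: reduce to $y<x$, write $x|x|^{\beta-1}-y|y|^{\beta-1}=\int_y^x \beta|\tau|^{\beta-1}\,\mathrm{d}\tau$, apply Cauchy--Schwarz, and identify $C_\beta=2\beta^2/(2\beta-1)$. You also verify this constant identity explicitly, which the paper only asserts.
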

	\begin{proof}
		Without loss of generality, we may assume that $x > y.$ One can write
		\begin{equation*}
			x|x|^{\beta-1} - y|y|^{\beta-1} = \int_y^x \beta |\tau |^{\beta-1} \dtau.
		\end{equation*}
	The Cauchy-Schwarz inequality yields
		\begin{equation*}
			\left( \int_y^x \beta |\tau |^{\beta-1} \dtau\right)^2 \leq (x - y ) \left( \int_y^x \beta^2 |\tau|^{2(\beta-1)} \dtau\right).
		\end{equation*}
		Moreover, we compute
		\begin{equation*}
			\int_y^x \beta^2 |\tau|^{2(\beta-1)} \dtau = \frac{\beta^2}{2\beta-1} \big( x|x|^{2(\beta-1)} - y|y|^{2(\beta-1)} \big).
		\end{equation*}
		Combining these identities, we get
		\begin{equation*}
			\big(x|x|^{\beta-1} - y|y|^{\beta-1}\big)^2 \leq \frac{\beta^2}{2\beta-1} (x - y) \big(x|x|^{2(\beta-1)} - y|y|^{2(\beta-1)}\big).
		\end{equation*}
		Noting that $\frac{2\beta^2}{2\beta-1} = C_\beta$ yields the desired inequality.
	\end{proof}

	\begin{lemma}\label{l_be_quatro}
		For a fixed $\tau \in [-n^{1/(\beta -1) },n^{1/(\beta -1) }],$ we have $h_{\tau }(-n^{1/(\beta -1) })\leq 0$ and $h_{\tau }(n^{1/(\beta -1)} )\leq 0.$
	\end{lemma}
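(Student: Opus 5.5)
The plan is to observe that at the two endpoints $t=\pm n^{1/(\beta-1)}$ the truncated expressions in $h$ coincide with the untruncated power functions. Then $h_\tau(\pm n^{1/(\beta-1)})$ becomes a special case of the inequality already established in Lemma \ref{l_be_tres}.

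Set $m:=n^{1/(\beta-1)}$, so that $m^{\beta-1}=n$ and $m^{2(\beta-1)}=n^2$.

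For $t=m$ we have
\begin{equation*}
nt=nm=m|m|^{\beta-1}, \qquad n^2t=n^2m=m|m|^{2(\beta-1)}.
\end{equation*}
Substituting into the definition of $h$ gives
\begin{equation*}
h_\tau(m)=2\big(m|m|^{\beta-1}-\tau|\tau|^{\beta-1}\big)^2-C_\beta(m-\tau)\big(m|m|^{2(\beta-1)}-\tau|\tau|^{2(\beta-1)}\big).
\end{equation*}
This is exactly the left-hand side of Lemma \ref{l_be_tres} with $x=m$ and $y=\tau$. Hence $h_\tau(m)\le 0$.

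For $t=-m$ the same identities hold with signs, by oddness of $t\mapsto t|t|^{\beta-1}$ and $t\mapsto t|t|^{2(\beta-1)}$:
\begin{equation*}
n(-m)=-nm=(-m)|-m|^{\beta-1}, \qquad n^2(-m)=(-m)|-m|^{2(\beta-1)}.
\end{equation*}
Thus $h_\tau(-m)$ is the expression of Lemma \ref{l_be_tres} with $x=-m$ and $y=\tau$, and is therefore also nonpositive.

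No step is a real obstacle. The only points to check are the algebraic identities $m^{\beta-1}=n$ and $m^{2(\beta-1)}=n^2$, together with the sign bookkeeping for $t=-m$. The restriction $|\tau|\le m$ is not even needed for this lemma, since Lemma \ref{l_be_tres} holds for all real $x,y$.
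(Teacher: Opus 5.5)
Your proposal is correct and takes the same approach as the paper: substitute $t=\pm n^{1/(\beta-1)}$ into $h_\tau$ and recognize the result as the inequality of Lemma \ref{l_be_tres} with $x=\pm n^{1/(\beta-1)}$ and $y=\tau$. You also verify explicitly the identities $nm=m|m|^{\beta-1}$ and $n^2m=m|m|^{2(\beta-1)}$, which the paper leaves implicit, and you rightly note that the restriction $|\tau|\le n^{1/(\beta-1)}$ is not used.
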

	\begin{proof}
		Applying Lemma \ref{l_be_tres} with $x=n^{ 1/(\beta - 1)} $ and $y=\tau$, we obtain
		\begin{equation*}
		h_{\tau }(n^{\frac{1}{\beta -1}})=2\big(nn^{\frac{1}{\beta-1}}-\tau|\tau|^{\beta-1}\big)^{2}-C_\beta\big(n^{\frac{1}{\beta-1}}-\tau\big)\big(n^{2}n^{\frac{1}{\beta-1}}-\tau|\tau|^{2(\beta-1)}\big) \leq 0.
		\end{equation*}
		Analogously, choosing $x=-n^{ 1/(\beta - 1)}$ and $y=\tau$ yields
		\begin{equation*}
	h_{\tau }(-n^{\frac{1}{\beta -1}} ) = 2\big(n(-n^{\frac{1}{\beta-1}})-\tau|\tau|^{\beta-1}\big)^{2}-C_\beta(-n^{\frac{1}{\beta-1}}-\tau)\big(n^{2}(-n^{\frac{1}{\beta-1}})-\tau|\tau|^{2(\beta-1)}\big) \leq 0. \qedhere
		\end{equation*}
	\end{proof}
	\begin{proof}[Proof of inequality \eqref{ap_main} completed]
		Fix $\tau \in [-n^{1/(\beta -1) }, n^{1/(\beta -1) }]$. We aim to show that $h_\tau(t) \leq 0$ for all $|t| > n^{1/(\beta -1)}$. We compute 
		\begin{equation*}
			h_\tau'(t) = (4 - 2C_\beta)n^2 t - 4n\tau|\tau|^{\beta-1} + C_\beta n^2 \tau + C_\beta \tau|\tau|^{2(\beta-1)}.
		\end{equation*}
		Recalling the definitions of $\lambda_\beta$ and $p_\beta(\tau)$, a direct calculation shows that $4 - 2C_\beta = -\lambda_\beta$, and the remaining terms coincide with $p_\beta(\tau)$. Thus,
		\begin{equation*}
			h_\tau'(t) = -\lambda_\beta n^2 t + p_\beta(\tau).
		\end{equation*}
		We now analyze the sign of $h_\tau'(t)$ for $|t| > n^{1/(\beta -1)}$, with the aid of Lemma \ref{l_be_dois}. If $t < -n^{1/(\beta -1)}$, we have
		\begin{equation*}
			h_\tau'(t) \geq -\lambda_\beta n^2 t - \lambda_\beta n^2 n^{\frac{1}{\beta-1}} = -\lambda_\beta n^2 \big(t + n^{\frac{1}{\beta-1}}\big) > 0.
		\end{equation*}
		Therefore, $h_\tau$ is strictly increasing for $t < -n^{1/(\beta -1)}$. Using Lemma \ref{l_be_quatro}, we conclude that
		\begin{equation*}
			h_\tau(t) \leq h_\tau\big(-n^{\frac{1}{\beta-1}}\big) \leq 0.
		\end{equation*}
		Similarly, if $t > n^{1/(\beta -1)}$, we find
		\begin{equation*}
			h_\tau'(t) \leq -\lambda_\beta n^2 t + \lambda_\beta n^2 n^{\frac{1}{\beta-1}} = -\lambda_\beta n^2 \big(t - n^{\frac{1}{\beta-1}}\big) < 0,
		\end{equation*}
		which yields $h_\tau(t) \leq h_\tau\big(n^{\frac{1}{\beta-1}}\big) \leq 0.$ In both cases, we obtain $h_\tau(t) \leq 0$ for all $|t| > n^{1/(\beta -1)}.$
	\end{proof}
		

\end{document}